\documentclass[10pt]{amsart}

\usepackage{fullpage}

\usepackage{amsmath,amssymb,amsthm}
\usepackage{enumitem}
\usepackage[hidelinks]{hyperref}
\usepackage[nameinlink,capitalize,noabbrev]{cleveref}

\numberwithin{equation}{section}
\theoremstyle{plain}
\newtheorem{theorem}{Theorem}[section]
\newtheorem{lemma}[theorem]{Lemma}
\newtheorem{proposition}[theorem]{Proposition}
\newtheorem{corollary}[theorem]{Corollary}
\newtheorem{claim}[theorem]{Claim}
\theoremstyle{definition}
\newtheorem{definition}[theorem]{Definition}
\theoremstyle{remark}
\newtheorem{remark}[theorem]{Remark}

\newcommand{\R}{\mathbb R}
\newcommand{\N}{\mathbb N}
\newcommand{\Z}{\mathbb Z}
\newcommand{\PP}{\mathbb P}
\newcommand{\EE}{\mathbb E}

\DeclareMathOperator{\rank}{rank}
\DeclareMathOperator{\spann}{span}
\DeclareMathOperator{\supp}{supp}
\DeclareMathOperator{\dist}{dist}
\DeclareMathOperator{\diag}{diag}
\DeclareMathOperator{\conv}{conv}
\DeclareMathOperator{\sgn}{sgn}
\newcommand{\Id}{\mathrm{Id}}

\title[Banach--Mazur distance to the cross-polytope]{An improved lower bound on the Banach--Mazur distance to the cross-polytope}

\author{Omer Friedland}
\address{Institut de Math\'ematiques de Jussieu, Sorbonne Universit\'e, 4 place Jussieu, 75005 Paris, France}
\email{omer.friedland@imj-prg.fr}

\subjclass[2020]{Primary 46B07; Secondary 52A23, 52B11}

\keywords{Banach--Mazur distance, cross-polytope, Gluskin polytope, random polytopes, Maurey sparsification, thickening-stable Gaussian bounds}

\begin{document}

\begin{abstract}
Let $\Gamma$ be an $n\times m$ matrix with independent standard Gaussian entries and let $G_m = \Gamma(B_1^m)$ be the associated Gaussian Gluskin polytope (equivalently, a random $n$-dimensional quotient of $\ell_1^m$). In the regime $m = n^3$ we prove that, with probability at least $1-2/n$,
$$
d_{\mathrm{BM}}(G_m,B_1^n) \ge c n^{4/7}(\log n)^{-C},
$$
where $B_1^n = \conv\{\pm e_1,\dots,\pm e_n\}$ is the cross-polytope. This improves the previously best-known exponent $5/9$ (up to logarithmic factors) for this Gaussian model; in particular, the same lower bound holds for $\sup_{K} d_{\mathrm{BM}}(K,B_1^n)$.

The main new ingredient is a conditioning-compatible treatment of the regime of ``many small-coefficients''. After passing to a suitable Gaussian quotient, we apply a Maurey-type sparsification that reduces the relevant entropy (in effect shrinking the support size from $k$ to $k/\log(n\rho)$) at the cost of a Euclidean thickening. We control this enlargement via a Gaussian measure bound stable under Euclidean thickening. In the complementary regime of ``few small-coefficients'', we give a streamlined argument avoiding the global tilting step in earlier work. Together these ingredients rebalance entropy and small-ball estimates and yield the exponent $4/7$.
\end{abstract}

\maketitle

\tableofcontents

%%%%%%%%%%%%%%%%%%%%%%%%%%%%%%%%%%%%%%%%%%%
%%%%%%%%%%%%%%%%%%%%%%%%%%%%%%%%%%%%%%%%%%%
\section{Introduction}

The Banach--Mazur distance quantifies how far two origin-symmetric convex bodies in $\R^n$ are from being linearly isomorphic up to bounded distortion. We recall the definition.

\begin{definition}[Banach--Mazur distance] \label{def:BM}
Let $K,L\subset\R^n$ be origin-symmetric convex bodies. Their Banach--Mazur distance is
$$
 d_{\mathrm{BM}}(K,L)
 := \inf\Bigl\{\rho\ge 1: \exists T\in GL(n) \text{ such that } T(K)\subset L\subset \rho T(K)\Bigr\}.
$$
\end{definition}

A classical benchmark problem in asymptotic convex geometry is to estimate
$$
\sup_{K} d_{\mathrm{BM}}(K,B_1^n),
$$
where $B_1^n$ is the cross-polytope.

A general upper bound due to Giannopoulos~\cite{Giannopoulos1995} asserts that
$$
\sup_{K} d_{\mathrm{BM}}(K,B_1^n)\le C n^{5/6}.
$$
This improved earlier estimates of Bourgain--Szarek~\cite{BourgainSzarek1988} and Szarek--Talagrand~\cite{SzarekTalagrand1989}; see also Youssef~\cite{Youssef2014} for an improved constant. These questions are closely connected with restricted invertibility~\cite{BourgainTzafriri1987} and proportional Dvoretzky--Rogers factorization~\cite{DvoretzkyRogers1950}; see, e.g.,~\cite{BourgainSzarek1988,SzarekTalagrand1989,Giannopoulos1995,Youssef2014,DavidsonSzarek2001,NaorYoussef2017,FriedlandYoussef2019}.

On the other hand, Szarek~\cite{Szarek1990} proved the lower bound
$$
\sup_{K} d_{\mathrm{BM}}(K,B_1^n)\ge c\sqrt n \log n
$$
for a universal constant $c>0$. His proof introduced a viewpoint that has proved remarkably robust: one exhibits extremal behavior via Gluskin-type random polytopes~\cite{Gluskin1981} together with an $\varepsilon$--net discretization, and complements it with strong probabilistic input for Gaussian matrices.

Fix $n\ge 3$ and let $m := n^3$. Let $g_1,\dots,g_m$ be independent $N(0,\Id_n)$ vectors in $\R^n$ and write $\Gamma = [g_1 \cdots g_m]\in\R^{n\times m}$. We consider the symmetric random polytope
\begin{equation} \label{eq:intro-def-Gm}
G_m := \conv\{\pm g_1,\dots,\pm g_m\} = \Gamma(B_1^m)\subset\R^n,
\end{equation}
which we refer to as the \emph{Gaussian Gluskin polytope}.

Building on Szarek's framework, Tikhomirov~\cite{Tikhomirov2019} showed that in the Gaussian Gluskin model with $m = n^3$ one has
$$
d_{\mathrm{BM}}(G_m,B_1^n)\ge c n^{5/9}(\log n)^{-C}
$$
with positive probability, for universal constants $c,C>0$.

We improve the exponent to $4/7$ (up to logarithmic factors) and show that the bound holds with probability at least $1-2/n$.

\begin{theorem} \label{thm:main}
There exist universal constants $c,C>0$ such that for all sufficiently large integers $n$, letting $m = n^3$ and $G_m$ be defined by \eqref{eq:intro-def-Gm}, one has
$$
\PP\Bigl\{d_{\mathrm{BM}}(G_m, B_1^n)\ge c n^{4/7}(\log n)^{-C}\Bigr\}\ge 1-\frac{2}{n}.
$$
\end{theorem}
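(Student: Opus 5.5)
We follow Szarek's and Tikhomirov's scheme: a union bound over a net of candidate operators, with separate treatment of the ``many small coefficients'' and ``few small coefficients'' regimes described in the abstract. Fix $\rho = c n^{4/7}(\log n)^{-C}$ and suppose $d_{\mathrm{BM}}(G_m,B_1^n)<\rho$. Then there is $T$ with $T(B_1^n)\subset G_m\subset \rho T(B_1^n)$. Put $x_i:=Te_i\in G_m$. Each $x_i$ can be written as $x_i=\Gamma a_i$ with $\|a_i\|_1\le 1$, and every column satisfies $g_j\in\rho\,\conv\{\pm x_i\}$.

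\emph{Reduction.} The relation $g_j\in\rho\conv\{\pm \Gamma a_i\}$ gives $\Gamma(e_j-\rho\sum_i \lambda_{ji}\sigma_{ji}a_i)=0$ for some convex weights $\lambda_{ji}$ and signs $\sigma_{ji}$. So the $n$ vectors $a_i$, drawn from $B_1^m$, must produce $m=n^3$ vectors that are ``almost'' in $\ker\Gamma$. The aim is to show this fails with probability $\ge 1-2/n$.

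\emph{Splitting.} Truncate each $a_i$ at level $\tau\approx 1/(n\rho)$ (up to logs). This gives a large-coefficient part $a_i^{L}$, supported on at most $1/\tau$ coordinates, and a small-coefficient part $a_i^{S}$.

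In the \emph{few small coefficients} regime, $\Gamma a_i^S$ is negligible for most $i$, so each $x_i$ is essentially a sparse combination of columns. Conditioning on the columns used by the $a_i^L$, a fresh column $g_j$ outside these supports is an independent Gaussian. It must lie in $\rho\conv\{\pm x_i\}$, which has Gaussian measure at most $(C\rho\sqrt{\log n}/\sqrt n)^{n}$ by Carbery--Wright/volume bounds for polytopes with $2n$ vertices. Many columns are untouched, and the independence among them makes the failure probability superexponentially small. That beats the entropy of choosing sparse supports and a net on coefficients. This replaces the tilting step by a direct conditioning argument.

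In the \emph{many small coefficients} regime, apply Maurey's empirical method to $a_i^S$. Sampling $k/\log(n\rho)$ coordinates gives a vector $\tilde a_i$ of lower entropy with $\|\Gamma(a_i^S-\tilde a_i)\|_2\le r$, where $r$ is controlled by $\max_j|g_j|\,\|a_i^S\|_\infty^{1/2}$, roughly $\sqrt{n\tau}$. Then each $x_i$ lies within $r$ of a net point $\Gamma\tilde a_i$. After passing to a Gaussian quotient (conditioning on $\Gamma$ restricted to the supports used), we need a Gaussian small-ball bound for the thickened set $\rho\conv\{\pm y_i\}+\rho r B_2^n$. This bound must be stable under Euclidean thickening. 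The plan is to project onto the orthogonal complement of a subspace of dimension about $n/2$ spanned by the large parts, and to estimate $\gamma(\rho P\conv\{\pm y_i\}+\rho rB_2)$ by $(C(\rho w+\rho r)/\sqrt n)^{n/2}$, where $w$ is the mean width factor.

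\emph{Balancing.} Collect the two exponents: the entropy $\exp(Cnk\log(n\rho)/\log(n\rho))$ against the small-ball gain $\exp(-c\,n\cdot\#\text{columns}\cdot\log(\sqrt n/\rho r))$. Then optimize $\tau$ and $k$. The intended outcome is that both regimes close precisely when $\rho\le n^{4/7}(\log n)^{-C}$. The exceptional events, namely $\max_j|g_j|\ge C\sqrt{n}$ and the failure of the net-regularity of $\Gamma$, have total probability $\le 2/n$.

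\emph{Main obstacle.} The thickening-stable Gaussian bound in the many-small-coefficients regime is the crux. The thickening radius $\rho r$ must stay below $\sqrt n$ after conditioning, while the Maurey savings must genuinely reduce the entropy exponent by the $\log(n\rho)$ factor. If the naive projection bound is insufficient, I would first try a multiscale dyadic decomposition of the coefficient sizes, applying Maurey separately at each level.
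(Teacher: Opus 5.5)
\textbf{Genuine gap: the sparse regime cannot get past $\rho\approx\sqrt n$, and the Maurey step targets the wrong entropy.}

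The decisive gap is in your few-small-coefficients regime. The bound $(C\rho\sqrt{\log n}/\sqrt n)^n$ you invoke is just the Hadamard/volume estimate (Lemma~\ref{lem:crosspoly-measure} with generator norms $\lesssim\sqrt n$). At $\rho=n^{4/7}$ its base is of order $n^{1/14}$ up to logarithms, so the bound exceeds $1$ and powering gains nothing. Volume considerations alone stop at $\rho\approx\sqrt n$, which is exactly the barrier to be crossed; Carbery--Wright does not help here. The paper crosses it with two ingredients absent from your plan:
\begin{enumerate}
\item \emph{Random suppression} (Lemma~\ref{lem:random-suppression}). At the cost of a factor $4$ in dilation, it reduces to polytopes in which a random $r$-subset $J$ of the $p\ge n/2$ sparse generators is shrunk by $r/p$.
\item \emph{A block-tail Gram--Schmidt estimate} (Lemma~\ref{lem:block-tail-distances}). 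Listing $J$ last, at least $r/2$ of these generators lie within $\lesssim\sqrt r$ of the span of their predecessors, because those spans have codimension at most $n-p+r$.
\end{enumerate}
Lemma~\ref{lem:det-shrink-clean} then gives $e^{-cr}$ once $\rho\sqrt r\lesssim n$. Taking $r\gtrsim\tilde s\log n$ beats the $\binom{n}{k}^2$ choices of generators, and this is the constraint $\rho\lesssim n/\sqrt{\tilde s}$.

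The Gram--Schmidt event must hold simultaneously for all admissible sparse parts. This is affordable only because the threshold $1/s$ makes them $s$-sparse with $sn\log(n\rho)\lesssim\tilde s^2\le r^2$ (Appendix~\ref{app:suppr-proof}), which forces $s\asymp\tilde s^2/(n\Lambda)\approx n^{5/7}$. Your threshold $\tau\approx 1/(n\rho)$ allows large parts on up to $n\rho$ coordinates, so there is no sparsity gain at all. You also skip the LP step (Lemma~\ref{lem:l1-sparse-existence}), which makes every $a_i$ $n$-sparse. That step is what makes the whole polytope measurable with respect to the exposed columns. Without it, conditioning only on the supports of the $a_i^L$ does not freeze $\conv\{\pm x_i\}$, and "negligible for most $i$" does not repair this.

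In the many-small-coefficients regime, Maurey is applied to the wrong object. Sparsifying each $a_i^S$ in coefficient space reduces the entropy of the coefficient net. That entropy ($\lesssim n^2\log(n\rho)$) is already negligible against powering over $n^3-n^2$ fresh columns, which only needs a per-column bound $\gamma_n(\mathcal K_A)\le 1/2$. The entropy that actually binds is combinatorial and comes from the $K/U$ split. Lemma~\ref{lem:bridge} gives $\mathcal K_A\subset 4\rho\,\mathcal P(A;\omega)$, a union of $\binom{n}{k}^2$ cross-polytopes with $k$ $U$-generators. The paper then proceeds as follows:
\begin{itemize}
\item It fixes the $n-k$ $K$-generators and projects onto the orthogonal complement of their span.
\item It applies Maurey to the $k$ projected $U$-vectors, which have norm $\le L\asymp\sqrt{n\Lambda/s}$ on $\mathcal E^{(U)}$. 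This replaces one factor $\binom{n}{k}$ by $\binom{n}{\lceil k/\Lambda\rceil}\le e^{4k}$, at thickening cost $r_0=L/\sqrt t$.
\item It applies Lemma~\ref{lem:thickening} in dimension $k$, not $n/2$, to get $(C\rho L\sqrt\Lambda/k)^k$. This gives $\rho\lesssim\tilde s^3/n^2$ up to logarithms.
\end{itemize}
Balancing this against $\rho\lesssim n/\sqrt{\tilde s}$ gives $\tilde s\approx n^{6/7}$ and $\rho\approx n^{4/7}$. Your trade-off of coefficient-net entropy against per-column gain is not the operative one, and the exponent $4/7$ is asserted rather than derived.
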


\begin{corollary} \label{cor:supK}
There exist universal constants $c,C>0$ such that for all sufficiently large $n$,
$$
\sup_{K} d_{\mathrm{BM}}(K,B_1^n) \ge c n^{4/7}(\log n)^{-C}.
$$
\end{corollary}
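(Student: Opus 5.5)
The corollary follows from \cref{thm:main} by the probabilistic method; all the work is in the theorem. Fix $n$ large enough that \cref{thm:main} applies, and set $m=n^3$. For $n\ge 3$ we have $1-2/n>0$, so the event
$$
\Bigl\{d_{\mathrm{BM}}(G_m,B_1^n)\ge c n^{4/7}(\log n)^{-C}\Bigr\}
$$
has positive probability. In particular it is nonempty: there is a realization $\Gamma_0=[g_1^0\cdots g_m^0]$ of the Gaussian matrix lying in this event.

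The first step is to check that $K:=\Gamma_0(B_1^m)$ is an admissible body in the supremum, that is, an origin-symmetric convex body in $\R^n$. Convexity and symmetry are immediate, since $K=\conv\{\pm g_j^0\}$. For nonempty interior we need $\Gamma_0$ to have rank $n$. The set of rank-deficient matrices is a proper algebraic subvariety, so it has Gaussian measure zero. Intersecting the positive-probability event with the full-measure event $\{\rank\Gamma=n\}$ therefore still gives positive probability, and we choose $\Gamma_0$ from this intersection. This is the only point that needs any care. Alternatively, the lower bound already forces this: if $\Gamma_0$ were degenerate, $K$ would not be a body and the distance would be infinite or undefined, so the event implicitly concerns nondegenerate realizations.

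With this choice,
$$
\sup_K d_{\mathrm{BM}}(K,B_1^n)\ge d_{\mathrm{BM}}(\Gamma_0(B_1^m),B_1^n)\ge c n^{4/7}(\log n)^{-C},
$$
with the same universal constants $c,C$ as in \cref{thm:main}, for all sufficiently large $n$.
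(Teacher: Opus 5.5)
Your proposal is correct and follows the paper's argument: apply Theorem~\ref{thm:main} and pick a realization of $G_m$ from the positive-probability event. Your extra step is a careful addition the paper leaves implicit. By intersecting with the full-measure event $\{\rank\Gamma=n\}$, you ensure that $K=\Gamma_0(B_1^m)$ is a genuine convex body, so that a degenerate realization cannot satisfy the bound only because $d_{\mathrm{BM}}=+\infty$.
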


\begin{proof}
By Theorem~\ref{thm:main}, the stated lower bound holds for $K = G_m$ with positive probability, hence some realization of $G_m$ satisfies it.
\end{proof}

This provides a further quantitative step toward closing the gap between the upper bound of order $n^{5/6}$ and the available lower bounds.

%%%%%%%%%%%%%%%%%%%%%%%%%%%%%%%%%%%%%%%%%%%
\subsection*{New ingredients}

Our proof follows the discretization and conditioning/powering scheme of Szarek and Tikhomirov. The improved exponent $4/7$ comes from a new treatment of the case where many coefficients lie below the threshold $1/s$: after conditioning and passing to a Gaussian quotient, we reduce the relevant entropy via a Maurey-type sparsification at the cost of a Euclidean thickening, which is then controlled by a thickening-stable Gaussian measure bound. The complementary case (few coefficients below $1/s$) admits a streamlined argument.

\subsubsection*{Many $U$-generators (large-$U$): entropy reduction compatible with conditioning.}

After conditioning on the exposed columns $\Gamma_{S(A)}$, one is led (after passing to an appropriate $k$-dimensional quotient) to large unions of cross-polytopes indexed by the choice of the $k$ ``small-coefficient'' generators (and ultimately by mixed choices of size about $\binom{n}{k}^2$ before quotienting). For a direct union bound, the available Gaussian-measure bounds for a \emph{single} polytope are too weak to offset this entropy (as reflected in the exponent $5/9$ in \cite{Tikhomirov2019}).

Our main new input is an entropy reduction performed \emph{after conditioning} and \emph{inside the quotient}. A Maurey-type sparsification replaces the $k$ generators by a subset of size
$$
t(k)\asymp \frac{k}{\log(n\rho)},
$$
so the union bound runs over $\binom{n}{t(k)}$ choices rather than $\binom{n}{k}$, at the cost of a Euclidean thickening of radius $r_0\asymp L/\sqrt{t(k)}$. The thickening is controlled by a Gaussian measure bound for thickened absolute convex hulls whose dependence enters through $R + \eta\sqrt d$, matching the scale produced by the Maurey step.

\subsubsection*{Few $U$-generators (small-$U$): suppression and block-tail Gram--Schmidt.}

When $k$ is small, each relevant polytope contains at least $n-k$ sparse (large-coefficient) generators. We obtain the required \emph{uniform Gaussian-measure} bounds without the global ``tilting over all permutations'' step of \cite{Tikhomirov2019}. Instead, we combine (i) a random suppression (averaging) lemma and (ii) a block-tail Gram--Schmidt distance estimate uniform over the suppressed block. Together with a deterministic Gram--Schmidt/volume bound, this yields exponentially small Gaussian measure for each relevant polytope.

%%%%%%%%%%%%%%%%%%%%%%%%%%%%%%%%%%%%%%%%%%%
\subsection*{Outline of the proof}

Fix $\rho\ge 1$ and consider the Banach--Mazur event $\mathcal E_\rho$ defined in \eqref{eq:E-rho}.
To prove Theorem~\ref{thm:main} we show that for
$$
\rho \asymp n^{4/7}(\log n)^{-C}
\quad (m = n^3)
$$
one has $\PP(\mathcal E_\rho)\le 2/n$. The argument follows the Szarek--Tikhomirov global architecture (discretization and conditioning/powering), with new input in the large-$U$ regime.

\subsubsection*{1) Discretization and powering: reduce to a uniform Gaussian measure bound.}

On $\mathcal E_\rho$ one may represent the witness cross-polytope through $G_m = \Gamma(B_1^m)$ by a coefficient matrix $A$ with $\ell_1$-bounded, $n$-sparse columns, so that $G_m\subset \rho\Gamma A(B_1^n)$. We discretize $A$ in $\ell_1$ to a finite net $\mathcal A_\varepsilon$ with $\log|\mathcal A_\varepsilon|\lesssim n^2\log(n\rho)$ for $\varepsilon\simeq(\rho n^2)^{-1}$. For each $A\in\mathcal A_\varepsilon$ we expose $\Gamma_{S(A)}$ and view the remaining Gaussian columns as fresh. Tikhomirov's powering identity yields
$$
\PP\bigl(G_m\subset 2\rho \Gamma A(B_1^n)\mid \mathcal F_A\bigr)
\le \gamma_n\bigl(\mathcal K_A(\rho;\Gamma_{S(A)})\bigr)^{N(A)}.
$$
Thus it suffices to construct a high-probability event on which $\gamma_n(\mathcal K_A)\le 1/2$ holds simultaneously for all $A\in\mathcal A_\varepsilon$.

\subsubsection*{2) Bridge and coefficient splitting: reduce to mixed unions.}

Thresholding coefficients at level $1/s$ decomposes each column into a sparse ``$K$'' part and an $\ell_2$-small ``$U$'' part. A bridge lemma reduces $\mathcal K_A$ to a union of mixed cross-polytopes generated by $n$ vectors, of which exactly $k$ come from the $U$-part:
$$
\gamma_n(\mathcal K_A) \lesssim \sum_{k = 0}^n p_k(A;\Gamma_{S(A)}).
$$
We choose a cutoff $\tilde s \ge 1$ and treat $k\le \tilde s$ and $k>\tilde s$ separately.

\subsubsection*{3) Small-$U$ regime ($k\le \tilde s$): suppression and block-tail Gram--Schmidt.}

In this regime each polytope contains many $K$-generators. On a global event we combine random suppression with a block-tail Gram--Schmidt distance estimate and a deterministic Gram--Schmidt/volume bound to obtain $\gamma_n(\text{ one polytope})\le \exp(-cr)$. A union bound over $\binom{n}{k}^2$ choices implies $\sum_{k\le \tilde s}p_k\le 1/4$ provided (up to logarithms)
$$
\rho \lesssim \frac{n}{\sqrt{\tilde s}}.
$$

\subsubsection*{4) Large-$U$ regime ($k>\tilde s$): Maurey sparsification and thickening-stable bound.}

On a global event $\mathcal E^{(U)}$ we have $\|u_i\|_2\le L$ uniformly. After projecting away the span of the chosen $K$-generators we work in dimension $k$. A Maurey-type sparsification reduces $k$ generators to
$$
t(k)\asymp \frac{k}{\log(n\rho)},
$$
at the cost of a Euclidean thickening of radius $r_0\asymp L/\sqrt{t(k)}$. The thickening-stable Gaussian measure bound (Lemma~\ref{lem:thickening}) controls the Gaussian measure of the thickened polytope, and yields $\sum_{k>\tilde s}p_k\le 1/4$ under the polynomial constraint (up to logarithms)
$$
\rho \lesssim \frac{\tilde s^3}{n^2}.
$$

\subsubsection*{5) Optimization.}

Balancing the two constraints gives $\tilde s\asymp n^{6/7}$ and hence $\rho\asymp n^{4/7}$ (up to logarithms), which yields $\PP(\mathcal E_\rho)\le 2/n$ and completes the proof.

\subsection*{Parameter guide (at a glance)}
\begin{center}
\fbox{\begin{minipage}{0.95\linewidth}\small
The proof uses several auxiliary parameters. Here is their role and typical scale
(up to absolute constants and logarithmic slack; precise choices are in
Lemma~\ref{lem:verify-constraints} and \eqref{eq:params-choice-47}).

\begin{tabular}{@{}p{0.22\linewidth}p{0.74\linewidth}@{}}
$\Lambda=\log(n\rho)$
& log/entropy scale in the large-$U$ regime (see \eqref{eq:tdef}).\\
$\tilde s$
& cutoff for the number $k$ of $U$-generators: $k\le \tilde s$ (small-$U$) vs. $k>\tilde s$ (large-$U$);
optimized $\tilde s\asymp n^{6/7}\Lambda^{2/7}$ (see \eqref{eq:balanced-tilde-s-47}).\\
$r$
& suppression block size in the small-$U$ regime; chosen $r\asymp \tilde s \Lambda$ (see \eqref{eq:params-choice-47}).\\
$s$
& threshold parameter for the $K/U$ coefficient split at level $1/s$; chosen $s\asymp \tilde s^2/(n\Lambda)$
(see \eqref{eq:params-choice-47}).\\
$L$
& uniform bound on $\|u_i\|_2$ on the event $\mathcal E^{(U)}$:
$L=C_0\sqrt{\frac{n}{s}\Lambda}\asymp \frac{n\Lambda}{\tilde s}$ (see \eqref{eq:L-def}).\\
$t(k)$
& Maurey sparsification size in the $k$-dimensional quotient:
$t(k)=\lceil k/\Lambda\rceil$ (see \eqref{eq:tdef}).\\
$r_0$
& Euclidean thickening radius produced by Maurey:
$r_0=L/\sqrt{t(k)}$ (see \eqref{eq:tdef}).\\
$\varepsilon$
& discretization mesh for the coefficient net: $\varepsilon=(\rho n^2)^{-1}$ (see Lemma~\ref{lem:discretization} and below).\\
\end{tabular}
\end{minipage}}
\end{center}

%%%%%%%%%%%%%%%%%%%%%%%%%%%%%%%%%%%%%%%%%%%
\subsection*{Organization of the paper}

Section~\ref{sec:disc} carries out the discretization and reduction to a finite net of sparse coefficient matrices. Section~\ref{sec:powering} recalls the conditioning/powering framework of \cite{Tikhomirov2019}. Section~\ref{sec:bridge} introduces the $K/U$ decomposition and the reduction to unions of mixed cross-polytopes. Section~\ref{sec:small} treats the small-$U$ regime via random suppression and block-tail Gram--Schmidt bounds, with the uniform suppression event proved in Appendix~\ref{app:suppr-proof}. Section~\ref{sec:large} develops the new large-$U$ argument via Maurey sparsification and the thickening-stable estimate. Finally, Section~\ref{sec:completion} optimizes parameters and completes the proof.

Throughout, $c,C,c',C',\dots$ denote positive absolute constants whose values may change from line to line.

%%%%%%%%%%%%%%%%%%%%%%%%%%%%%%%%%%%%%%%%%%%
%%%%%%%%%%%%%%%%%%%%%%%%%%%%%%%%%%%%%%%%%%%
\section{Preliminaries}

%%%%%%%%%%%%%%%%%%%%%%%%%%%%%%%%%%%%%%%%%%%
\subsection{Discretization: a net of sparse coefficient matrices} \label{sec:disc}

We defer a number of standard auxiliary results (elementary $\ell_1$ geometry,
Gaussian concentration/tails, and an $\ell_1$-sparsity linear-programming principle)
to Appendix~\ref{app:standard}. We only record notation used later.

For $k\in\N$ and $x\in\R^k$ we write $\supp(x) := \{i\in[k]: x_i\neq 0\}$.

We work with the Banach--Mazur event
\begin{equation} \label{eq:E-rho}
\mathcal E_\rho := \Bigl\{d_{\mathrm{BM}}(G_m, B_1^n)\le \rho\Bigr\}.
\end{equation}

For $m,n\in\N$ and $\varepsilon>0$, define $\mathcal A_{m,n}$ to be the set of matrices $A\in\R^{m\times n}$ such that for every column $A^{(i)}$,
$$
\|A^{(i)}\|_1\le 1
\quad\text{ and}\quad
|\supp(A^{(i)})|\le n.
$$
Define the discretized subclass
$$
\mathcal A_{m,n}(\varepsilon) := \{A\in\mathcal A_{m,n}: A_{jk}\in \varepsilon\Z, \forall j,k\}.
$$

%%%%%%%%%%%%%%%%%%%%%%%%%%%%%%%%%%%%%%%%%%%
\subsection{From Banach--Mazur to a sparse coefficient matrix}

\begin{lemma}[Sparse coefficient representation on $\mathcal E_\rho$] \label{lem:BM-to-A}
Assume $\mathcal E_\rho$ holds. Then there exists $A\in\mathcal A_{m,n}$ such that
\begin{equation} \label{eq:Erho-coeff}
G_m\subset \rho \Gamma A(B_1^n).
\end{equation}
\end{lemma}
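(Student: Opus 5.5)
The plan is to unwind the definition of $\mathcal E_\rho$ and then use Carathéodory-type sparsity (the $\ell_1$ linear-programming principle in Appendix~\ref{app:standard}) to make the coefficient columns $n$-sparse. On $\mathcal E_\rho$ there is $T\in GL(n)$ with $T(B_1^n)\subset G_m\subset \rho T(B_1^n)$; the infimum in Definition~\ref{def:BM} is attained by compactness of the set of admissible $T$ after normalization. If one prefers to avoid this, replace $\rho$ by $\rho(1+\delta)$ and let $\delta\to0$ at the end, using compactness of $\mathcal A_{m,n}$. Here we used that $d_{\mathrm{BM}}$ is symmetric, so we may place $B_1^n$ on either side. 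Set $v_i := Te_i$ for $i=1,\dots,n$. Since $T(B_1^n)=\conv\{\pm v_i\}$, the inclusion $G_m\subset\rho T(B_1^n)$ is the desired containment once each $v_i$ is written through $\Gamma$.

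The inclusion $T(B_1^n)\subset G_m=\Gamma(B_1^m)$ gives, for each $i$, some $a\in B_1^m$ with $\Gamma a=v_i$. The main step is to choose such $a$ with $|\supp(a)|\le n$. To do this, take a minimizer $a^{(i)}$ of $\|a\|_1$ subject to $\Gamma a=v_i$. Its norm is at most $1$, and the minimum exists since the feasible set is a nonempty closed polyhedron and the objective is coercive. Rewriting it as a linear program in variables $a^\pm\ge0$ with $n$ equality constraints, a vertex optimal solution has at most $n$ nonzero coordinates. Alternatively, Carathéodory's theorem applied to $v_i/\|a\|_1$ on the boundary of $G_m$ shows it is a convex combination of at most $n$ of the points $\pm g_j$, since a boundary point lies in a facet of dimension $\le n-1$. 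I expect this to be exactly the appendix LP principle, so I would simply invoke it.

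Finally, let $A$ have columns $A^{(i)}=a^{(i)}$. Then $A\in\mathcal A_{m,n}$ and $\Gamma A e_i=v_i$, so $\Gamma A(B_1^n)=T(B_1^n)$, and hence $G_m\subset\rho\,\Gamma A(B_1^n)$, which is \eqref{eq:Erho-coeff}. The only delicate points are the attainment of the infimum (handled by compactness) and the sparsity reduction. Neither is a real obstacle: almost surely $\Gamma$ has full rank, and in any case only the existence of feasible solutions matters.
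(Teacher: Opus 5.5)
Your proposal is correct and follows the paper's proof essentially verbatim. You rescale the Banach--Mazur witness to a cross-polytope $P=\conv\{\pm v_i\}$ with $P\subset G_m\subset \rho P$, apply the $\ell_1$-sparsity principle (Lemma~\ref{lem:l1-sparse-existence}) to each vertex $v_i\in\Gamma(B_1^m)$, and assemble the resulting columns into $A\in\mathcal A_{m,n}$. Your remark on attainment of the infimum in Definition~\ref{def:BM} (or, alternatively, the $\rho(1+\delta)$ limiting argument using compactness of $\mathcal A_{m,n}$) covers a point the paper passes over silently when it asserts the existence of $T$ directly from $\mathcal E_\rho$.
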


\begin{proof}
By definition of $\mathcal E_\rho$, there exists $T\in GL(n)$ such that
$$
T(G_m)\subset B_1^n \subset \rho T(G_m).
$$
Applying $T^{-1}$ gives
$$
G_m \subset T^{-1}(B_1^n) \subset \rho G_m.
$$
Set
$$
P := \rho^{-1}T^{-1}(B_1^n).
$$
Then $P$ is an origin-symmetric cross-polytope and satisfies $P\subset G_m \subset \rho P$.

Write $P = \conv\{\pm p_1,\dots,\pm p_n\}$ where $p_1,\dots,p_n\in\R^n$ are its vertices. Since $P\subset G_m = \Gamma(B_1^m)$,
each $p_i$ lies in $\Gamma(B_1^m)$.
Apply Lemma~\ref{lem:l1-sparse-existence} with $M = \Gamma$ and $y = p_i$:
there exists $a_i\in\R^m$ such that
$$
\Gamma a_i = p_i,\quad \|a_i\|_1\le 1,\quad |\supp(a_i)|\le \rank(\Gamma)\le n.
$$
Let $A\in\R^{m\times n}$ be the matrix with columns $a_1,\dots,a_n$. Then $A\in\mathcal A_{m,n}$.

Moreover,
$$
\Gamma A(B_1^n)
 = \Gamma\Bigl(\conv\{\pm a_1,\dots,\pm a_n\}\Bigr)
 = \conv\{\pm \Gamma a_1,\dots,\pm \Gamma a_n\}
 = \conv\{\pm p_1,\dots,\pm p_n\} = P.
$$
Thus $G_m\subset \rho P = \rho \Gamma A(B_1^n)$, proving \eqref{eq:Erho-coeff}.
\end{proof}

%%%%%%%%%%%%%%%%%%%%%%%%%%%%%%%%%%%%%%%%%%%
\subsection{Discretization of the coefficient matrix}

\begin{lemma}[Discretization net and reduction] \label{lem:discretization}
Assume $n\ge 2$, $n \le m\le n^{10}$, and $0<\varepsilon\le 1$ satisfies
\begin{equation} \label{eq:disc-epscond}
\varepsilon \rho n^2\le 1.
\end{equation}
Define the finite set $\mathcal A_\varepsilon := \mathcal A_{m,n}(\varepsilon)$. Then the following hold:
\begin{enumerate}[label = \textnormal{(\roman*)}, leftmargin = 3.2em]
\item Every $A\in\mathcal A_\varepsilon$ satisfies $\|A^{(i)}\|_1\le 1$ and
$|\supp(A^{(i)})|\le n$ for all $i\in[n]$, and has entries in $\varepsilon\Z$.

\item One has the bound
\begin{equation} \label{eq:cardAeps}
|\mathcal A_\varepsilon| \le \Big(\frac{C m}{n \varepsilon}\Big)^{n^2}
\end{equation}
for a universal constant $C>0$.
In particular, when $m = n^3$ and $\varepsilon = 1/(\rho n^2)$,
\begin{equation} \label{eq:cardAeps-log}
\log|\mathcal A_\varepsilon| \le C n^2\log(n\rho).
\end{equation}

\item One has the event inclusion
$$
\mathcal E_\rho
\subset
\Bigl\{\exists A\in\mathcal A_\varepsilon:\quad G_m\subset 2\rho \Gamma A(B_1^n)\Bigr\}.
$$
In particular,
\begin{equation} \label{eq:disc-prob}
\PP(\mathcal E_\rho)
\le
\PP\Bigl(\exists A\in\mathcal A_\varepsilon:\quad
G_m\subset 2\rho \Gamma A(B_1^n)\Bigr).
\end{equation}
\end{enumerate}
\end{lemma}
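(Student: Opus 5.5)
Part (i) holds by definition, since $\mathcal A_\varepsilon=\mathcal A_{m,n}(\varepsilon)\subset\mathcal A_{m,n}$. For (ii) I count column by column. A column is fixed by its support $S\subset[m]$ with $|S|\le n$, together with a vector of integers $(z_j)_{j\in S}$, $z_j=A_{jk}/\varepsilon$, satisfying $\sum_j|z_j|\le 1/\varepsilon$. There are at most $\sum_{\ell\le n}\binom{m}{\ell}\le (Cm/n)^n$ supports. For the integer vectors I use the standard bound on lattice points in a dilated $\ell_1$ ball: the number of $z\in\Z^n$ with $\|z\|_1\le N$ is at most $2^n\binom{N+n}{n}\le (C(N+n)/n)^n$. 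With $N=1/\varepsilon\ge 1$ this is at most $(C/(n\varepsilon)+C)^n\le (C'/\varepsilon)^n$; I will take this fact from Appendix~\ref{app:standard} or check it by stars and bars. So one column has at most $(Cm/(n\varepsilon))^{n}$ choices, and raising to the $n$-th power over the $n$ columns gives \eqref{eq:cardAeps}. Substituting $m=n^3$ and $\varepsilon=1/(\rho n^2)$ gives $\log|\mathcal A_\varepsilon|\le n^2\log(Cn^4\rho)\le C n^2\log(n\rho)$, since $n\ge2$ and $\rho\ge1$. This is \eqref{eq:cardAeps-log}.

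For (iii), assume $\mathcal E_\rho$ holds. Lemma~\ref{lem:BM-to-A} gives $A\in\mathcal A_{m,n}$ with $G_m\subset\rho\Gamma A(B_1^n)$. I round each entry toward zero to the grid: $A'_{jk}:=\varepsilon\,\sgn(A_{jk})\lfloor |A_{jk}|/\varepsilon\rfloor$. Then the supports do not grow, $\|A'^{(i)}\|_1\le\|A^{(i)}\|_1\le1$, and $|A_{jk}-A'_{jk}|<\varepsilon$. Since each column of $A-A'$ has at most $n$ nonzero entries, $\|A^{(i)}-A'^{(i)}\|_1\le n\varepsilon$. Hence $A'\in\mathcal A_\varepsilon$.

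The main point is to compare $\Gamma A(B_1^n)$ with $\Gamma A'(B_1^n)$ using only the inclusion $G_m\subset\rho\Gamma A(B_1^n)$, with no norm bounds on $\Gamma$. Every point of $\Gamma A(B_1^n)$ has the form $\Gamma Ax$ with $\|x\|_1\le1$, so
\[
\Gamma Ax=\Gamma A'x+\Gamma(A-A')x .
\]
Since $\|(A-A')x\|_1\le n\varepsilon$, the error term $\Gamma(A-A')x$ lies in $n\varepsilon\,\Gamma(B_1^m)=n\varepsilon G_m\subset n\varepsilon\rho\,\Gamma A(B_1^n)$. Writing $P=\Gamma A(B_1^n)$ and $P'=\Gamma A'(B_1^n)$, this gives $P\subset P'+n\varepsilon\rho P$. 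By \eqref{eq:disc-epscond}, $\delta:=n\varepsilon\rho\le 1/n\le1/2$.

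I then iterate this inclusion, which is legitimate because all the sets are convex and symmetric:
\[
P\subset P'+\delta P'+\delta^2P'+\dots\subset(1-\delta)^{-1}P'\subset 2P' .
\]
Equivalently, a support-function argument gives $h_P\le h_{P'}+\delta h_P$, hence $h_P\le 2h_{P'}$. Therefore $G_m\subset\rho P\subset2\rho\,\Gamma A'(B_1^n)$, which proves the inclusion in (iii). Taking probabilities gives \eqref{eq:disc-prob}.

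The step I expect to need the most care is this self-referential absorption. It works because the error is measured in $\Gamma(B_1^m)=G_m$ itself, and the hypothesis already controls $G_m$ by $\rho P$. The condition $\varepsilon\rho n^2\le1$ in fact leaves slack: only $\varepsilon\rho n\le 1/2$ is used.
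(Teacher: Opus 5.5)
Your proposal is correct and follows essentially the same route as the paper: round entries toward zero to land in $\mathcal A_\varepsilon$, place the error $\Gamma(A-A')x$ in $n\varepsilon G_m$, absorb it with a support-function argument using $\delta=\rho n\varepsilon\le 1/2$, and count column by column for (ii). The only cosmetic differences are that you absorb into $P=\Gamma A(B_1^n)$ via $P\subset P'+\delta P$, whereas the paper absorbs into $G_m$ via $G_m\subset\rho P'+\delta G_m$; and you count grid values by lattice points in a dilated $\ell_1$-ball, whereas the paper uses the cruder $(C/\varepsilon)^n$ bound per support. Both versions give the same conclusions.
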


\begin{proof}
Define $\mathcal A_\varepsilon := \mathcal A_{m,n}(\varepsilon)$.
This set is finite because each entry lies in $\varepsilon\Z\cap[-1,1]$ and each column has support size $\le n$.
Item (i) is immediate from the definition.

\subsubsection*{Step 1: reduction to a (non-discretized) coefficient matrix.}

Lemma~\ref{lem:BM-to-A} shows that $\mathcal E_\rho$ implies the event
$$
\mathcal E_\rho^{\mathrm{coeff}}
 := 
\Bigl\{\exists A\in\mathcal A_{m,n}: G_m\subset \rho \Gamma A(B_1^n)\Bigr\},
$$
hence
\begin{equation} \label{eq:Erho-to-Ecoeff}
\PP(\mathcal E_\rho)\le \PP(\mathcal E_\rho^{\mathrm{coeff}}).
\end{equation}

\subsubsection*{Step 2: discretization and absorption.}

Define $q_\varepsilon:\R\to \varepsilon\Z$ by
$$
q_\varepsilon(t) := \varepsilon \sgn(t)\Big\lfloor \frac{|t|}{\varepsilon}\Big\rfloor,
\quad \text{ with }\sgn(0) := 0,
$$
and for $a\in\R^m$ define $Q_\varepsilon(a)\in(\varepsilon\Z)^m$ by $(Q_\varepsilon(a))_j := q_\varepsilon(a_j)$.
Then $|q_\varepsilon(a_j)|\le |a_j|$ and $|a_j-q_\varepsilon(a_j)|<\varepsilon$.

If $\|a\|_1\le 1$ and $|\supp(a)|\le n$, then
$$
\|Q_\varepsilon(a)\|_1\le 1,\quad |\supp(Q_\varepsilon(a))|\le n,\quad \|a-Q_\varepsilon(a)\|_1\le n\varepsilon.
$$
For $A\in\mathcal A_{m,n}$ define $\widetilde A := Q_\varepsilon(A)$ columnwise. Then $\widetilde A\in\mathcal A_{m,n}(\varepsilon)$ and for all $x\in B_1^n $,
$$
\|(A-\widetilde A)x\|_1
\le \sum_{i = 1}^n |x_i| \|A^{(i)}-\widetilde A^{(i)}\|_1
\le (n\varepsilon)\sum_{i = 1}^n |x_i|
\le n\varepsilon.
$$
Hence
$$
A(B_1^n)\subset \widetilde A(B_1^n) + n\varepsilon B_1^m. 
$$
Applying $\Gamma$ and using $G_m = \Gamma(B_1^m)$ yields
$$
\Gamma A(B_1^n)\subset \Gamma\widetilde A(B_1^n) + n\varepsilon G_m.
$$

Assume $\mathcal E_\rho^{\mathrm{coeff}}$ holds and fix $A$ such that $G_m\subset \rho \Gamma A(B_1^n)$. Let $\widetilde A := Q_\varepsilon(A)\in\mathcal A_\varepsilon$. Then
$$
G_m\subset \rho \Gamma\widetilde A(B_1^n) + (\rho n\varepsilon) G_m.
$$
Set $\delta := \rho n\varepsilon$. By \eqref{eq:disc-epscond} and $n\ge 2$,
$$
\delta = \rho n\varepsilon \le \frac{\rho n^2\varepsilon}{n}\le \frac{1}{n}\le \frac12.
$$

Since $G_m$ and $\rho\Gamma\widetilde A(B_1^n)$ are origin-symmetric convex bodies and $\delta<1$, and since
$K\subset L + \delta K$ with $0\le\delta<1$ implies $K\subset (1-\delta)^{-1}L$ (apply the Minkowski functional of $K$),
applying this with $K := G_m$ and $L := \rho\Gamma\widetilde A(B_1^n)$ yields
$$
G_m\subset \frac{1}{1-\delta} \rho\Gamma\widetilde A(B_1^n).
$$
Because $\delta\le 1/2$, one has $(1-\delta)^{-1}\le 2$, hence
$$
G_m\subset 2\rho \Gamma\widetilde A(B_1^n).
$$
Thus $\mathcal E_\rho^{\mathrm{coeff}}$ implies $\{\exists A\in\mathcal A_\varepsilon: G_m\subset 2\rho \Gamma A(B_1^n)\}$, and combining with \eqref{eq:Erho-to-Ecoeff} yields item (iii).

\subsubsection*{Step 3: cardinality bound.}

Fix one column. Since $m\ge n$, the number of possible supports $J\subset[m]$ with $|J|\le n$ equals
$$
\sum_{r = 0}^n \binom{m}{r}.
$$

We claim that there exists a universal constant $C>0$ such that
\begin{equation} \label{eq:support-count-disc}
\sum_{r = 0}^n \binom{m}{r}\le \Big(\frac{C m}{n}\Big)^n.
\end{equation}
Indeed, split into two cases.

If $m\ge 2n$, then $n\le m/2$ and the binomial coefficients are increasing for $0\le r\le n$, hence
$$
\sum_{r=0}^n \binom{m}{r}\le (n+1)\binom{m}{n}.
$$
By Lemma~\ref{lem:binombound}, $\binom{m}{n}\le (em/n)^n$, and since $n+1\le 2^n$ for $n\ge 1$ we obtain
$$
\sum_{r=0}^n \binom{m}{r}\le (n+1)\Big(\frac{em}{n}\Big)^n \le \Big(\frac{2e m}{n}\Big)^n.
$$

If $n\le m<2n$, then trivially $\sum_{r=0}^n \binom{m}{r}\le 2^m \le 2^{2n}=4^n$, and since $m/n\ge 1$ this gives
$$
\sum_{r=0}^n \binom{m}{r}\le 4^n \le \Big(\frac{4m}{n}\Big)^n.
$$
Combining the two cases yields \eqref{eq:support-count-disc} with a universal constant $C>0$ (e.g. $C=6$).

For a fixed support $J$ with $|J| = r\le n$, each supported coordinate lies in $\varepsilon\Z\cap[-1,1]$, hence has at most $C/\varepsilon$ choices.
Thus for that fixed $J$ there are at most $(C/\varepsilon)^r\le (C/\varepsilon)^n$ possibilities. Therefore the number of possible columns is at most
$(C m/(n\varepsilon))^n$. Since there are $n$ columns,
$$
|\mathcal A_\varepsilon|
\le \Big(\frac{C m}{n\varepsilon}\Big)^{n^2},
$$
which is \eqref{eq:cardAeps}. The logarithmic bound \eqref{eq:cardAeps-log} follows when $m = n^3$ and $\varepsilon^{-1} = \rho n^2$.
\end{proof}

\begin{remark}
The discretization step is carried out directly in $\ell_1$: the rounding map is monotone in absolute value, hence preserves the $\ell_1$ constraint, and the resulting perturbation is absorbed using only the inclusion $G_m = \Gamma(B_1^m)$. In particular, the reduction in Lemma~\ref{lem:discretization}(iii) is a deterministic event inclusion and does not require a separate geometric event (such as an inradius lower bound for $G_m$) to absorb the discretization error; compare with the discretization step in \cite[§2]{Tikhomirov2019}.
\end{remark}

%%%%%%%%%%%%%%%%%%%%%%%%%%%%%%%%%%%%%%%%%%%
\subsection{Exposed indices and the powering reduction} \label{sec:powering}

The conditioning/powering reduction is essentially Tikhomirov's Lemma 3.1 framework \cite[§3]{Tikhomirov2019}.

Let $\Gamma = [g_1 \cdots g_m]\in\R^{n\times m}$ as above. For $J\subset[m]$, write $\Gamma_J\in\R^{n\times |J|}$ for the submatrix consisting of columns $(g_j)_{j\in J}$, and for a deterministic $A\in\R^{m\times n}$ write $A_J\in\R^{|J|\times n}$ for the restriction of $A$ to the rows in $J$.

For $A\in\mathcal A_{m,n}$ define the \textbf{exposed index set}
$$
S(A) := \bigcup_{i = 1}^n \supp(A^{(i)})\subset[m].
$$
Since each column has support at most $n$,
\begin{equation} \label{eq:SA-size}
|S(A)|\le n^2.
\end{equation}

Note that, by definition of $S(A)$, all rows of $A$ outside $S(A)$ are zero. Hence for every realization of $\Gamma$,
\begin{equation} \label{eq:GammaA-restrict}
\Gamma A = \Gamma_{S(A)}A_{S(A)}.
\end{equation}

For $A\in\mathcal A_\varepsilon$ define the exposed $\sigma$-field
$$
\mathcal F_A := \sigma(g_j: j\in S(A)).
$$
Conditional on $\mathcal F_A$, the columns $\{g_j: j\notin S(A)\}$ are i.i.d. $N(0,\Id_n)$ and independent of $\mathcal F_A$.

Fix $\rho\ge 1$. For $A\in\mathcal A_{m,n}$ and a realization $\omega$ of the exposed matrix $\Gamma_{S(A)}$, define the (exposed, deterministic) target set
\begin{equation} \label{eq:KAdef}
\mathcal K_A(\rho;\omega) := 2\rho \Gamma_{S(A)}(\omega) A_{S(A)}(B_1^n)\subset\R^n.
\end{equation}

\begin{lemma}[Powering] \label{lem:powering}
Fix $\rho\ge 1$ and $A\in\mathcal A_{m,n}$. Let $\omega = \Gamma_{S(A)}$ be the exposed part.

If
\begin{equation} \label{eq:discAevent}
G_m = \Gamma(B_1^m)\subset 2\rho \Gamma A(B_1^n),
\end{equation}
then every column $g_j$ satisfies $g_j\in \mathcal K_A(\rho;\omega)$.

Moreover, writing $N(A) := m-|S(A)|$, one has the exact identity
\begin{equation} \label{eq:powering-identity}
\PP\bigl(\eqref{eq:discAevent} \big| \mathcal F_A\bigr)
 = 
{\mathbf 1}_{\{\forall j\in S(A): g_j\in \mathcal K_A(\rho;\omega)\}} 
\gamma_n\bigl(\mathcal K_A(\rho;\omega)\bigr)^{N(A)},
\end{equation}
where $\gamma_n$ denotes the standard Gaussian measure on $\R^n$.
In particular,
\begin{equation} \label{eq:powering-cond}
\PP\bigl(\eqref{eq:discAevent} \big| \mathcal F_A\bigr)\le \gamma_n\bigl(\mathcal K_A(\rho;\omega)\bigr)^{N(A)}.
\end{equation}
\end{lemma}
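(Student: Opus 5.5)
The plan is to unwind the inclusion \eqref{eq:discAevent} column by column, and then compute its conditional probability using independence of the unexposed columns.

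\emph{Column membership.} By \eqref{eq:GammaA-restrict}, $\Gamma A=\Gamma_{S(A)}A_{S(A)}$, so
\[
2\rho\Gamma A(B_1^n)=\mathcal K_A(\rho;\omega).
\]
This set depends only on the exposed columns. Since $g_j=\Gamma e_j\in\Gamma(B_1^m)$ for every $j\in[m]$, the inclusion \eqref{eq:discAevent} forces $g_j\in\mathcal K_A(\rho;\omega)$ for all $j$. This proves the first assertion.

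\emph{Equivalence.} Conversely, $\mathcal K_A(\rho;\omega)$ is convex and origin-symmetric, being a linear image of $B_1^n$. So if every $g_j$ lies in it, then
\[
\Gamma(B_1^m)=\conv\{\pm g_j\}\subset\mathcal K_A(\rho;\omega).
\]
Hence \eqref{eq:discAevent} is \emph{equivalent} to the event
\[
\{\forall j\in S(A):\ g_j\in\mathcal K_A\}\cap\{\forall j\notin S(A):\ g_j\in\mathcal K_A\}.
\]
This equivalence is the key step.

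\emph{Conditioning.} The first event, together with the set $\mathcal K_A(\rho;\omega)$ itself, is $\mathcal F_A$-measurable. The columns $g_j$ with $j\notin S(A)$ are i.i.d.\ $N(0,\Id_n)$ and independent of $\mathcal F_A$, and there are $N(A)=m-|S(A)|$ of them. By the freezing (substitution) lemma for conditional expectations, the conditional probability of the second event given $\mathcal F_A$ is therefore
\[
\prod_{j\notin S(A)}\gamma_n(\mathcal K_A)=\gamma_n(\mathcal K_A)^{N(A)}.
\]
Multiplying by the indicator of the first event gives \eqref{eq:powering-identity}, and bounding the indicator by $1$ gives \eqref{eq:powering-cond}.

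\emph{Main obstacle.} The only point needing care is measurability: the map $(\omega,x)\mapsto\mathbf 1\{x\in 2\rho\,\omega A_{S(A)}(B_1^n)\}$ must be jointly Borel. It is, because this set is $\{(\omega,x): \exists y\in B_1^n,\ x=2\rho\,\omega A_{S(A)}y\}$. That is the projection of a closed set along the compact factor $B_1^n$, hence closed. With joint measurability in hand, the substitution lemma applies.
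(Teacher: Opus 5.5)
Your proposal is correct and follows essentially the same route as the paper. You use symmetry and convexity to reduce the inclusion $G_m\subset 2\rho\Gamma A(B_1^n)$ to columnwise membership $g_j\in\mathcal K_A(\rho;\omega)$ for all $j\in[m]$, identify $2\rho\Gamma A(B_1^n)$ with $\mathcal K_A(\rho;\omega)$ via $\Gamma A=\Gamma_{S(A)}A_{S(A)}$, and condition on $\mathcal F_A$ using independence of the $N(A)$ fresh columns. Your joint-measurability argument (closedness of the projection along the compact factor $B_1^n$, which justifies the substitution lemma) is a correct detail that the paper leaves implicit.
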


\begin{proof}
By symmetry and convexity, \eqref{eq:discAevent} holds if and only if $g_j\in 2\rho \Gamma A(B_1^n)$ for all $j\in[m]$.
Since $S(A)$ contains the support of every column of $A$, using \eqref{eq:GammaA-restrict}, we have $\Gamma A = \Gamma_{S(A)}A_{S(A)}$, so the right-hand side equals $\mathcal K_A(\rho;\omega)$, proving the first claim.

Condition on $\mathcal F_A$. Then $\mathcal K_A(\rho;\omega)$ is deterministic, the exposed columns $(g_j)_{j\in S(A)}$ are fixed,
and the fresh columns $(g_j)_{j\notin S(A)}$ are i.i.d. $N(0,\Id_n)$ and independent of $\mathcal F_A$.
Therefore, \eqref{eq:discAevent} occurs iff (i) all exposed columns lie in $\mathcal K_A(\rho;\omega)$ and (ii) all $N(A)$ fresh columns lie in it.
This yields \eqref{eq:powering-identity}. The inequality \eqref{eq:powering-cond} follows immediately.
\end{proof}

%%%%%%%%%%%%%%%%%%%%%%%%%%%%%%%%%%%%%%%%%%%
\subsection{$K/U$ decomposition and a union of cross-polytopes} \label{sec:bridge}

We use the same sparse/short (``two-type'') decomposition of coefficient vectors as in \cite[§3]{Tikhomirov2019}.

Fix an integer parameter $s\in[1,n]$ and define a threshold $1/s$. For $a\in\R^m$ define its $K$- and $U$-parts by
$$
a^{(K)}_j := a_j \mathbf 1_{\{|a_j|\ge 1/s\}},
\quad
a^{(U)}_j := a_j \mathbf 1_{\{|a_j|< 1/s\}},
\quad
a = a^{(K)} + a^{(U)}.
$$
For a matrix $A\in\R^{m\times n}$ define $A^{(K)},A^{(U)}$ columnwise and set
$$
F_1(A) := A^{(K)},\quad F_2(A) := A^{(U)},\quad
F(A) := [F_1(A) F_2(A)]\in\R^{m\times 2n}.
$$

If $\|a\|_1\le 1$ then $|\supp(a^{(K)})|\le s$ because each nonzero entry of $a^{(K)}$ has magnitude $\ge 1/s$.
Also, if $\|a\|_1\le 1$ then
\begin{equation} \label{eq:U-l2}
\|a^{(U)}\|_2^2\le \|a^{(U)}\|_\infty\cdot \|a^{(U)}\|_1\le \frac{1}{s}\cdot 1 = \frac{1}{s}.
\end{equation}

Since $S(A)$ contains the support of every column of $A$, all rows of $A$ outside $S(A)$ are zero; the same holds for $F_1(A),F_2(A),F(A)$.
Hence, for every $A\in\mathcal A_\varepsilon$ and every realization of $\Gamma$,
\begin{equation} \label{eq:GammaF-restrict}
\Gamma F(A) = \Gamma_{S(A)}F_{S(A)}(A),
\end{equation}
where $F_{S(A)}(A)\in\R^{|S(A)|\times 2n}$ denotes the restriction of $F(A)$ to rows in $S(A)$.

Fix $A\in\mathcal A_\varepsilon$ and an exposed realization $\omega = \Gamma_{S(A)}$.
Define vectors
$$
v_i = v_i(A;\omega) := \Gamma_{S(A)}(\omega) F_{1,S(A)}(A)e_i\in\R^n,\quad i\in[n],
$$
$$
u_i = u_i(A;\omega) := \Gamma_{S(A)}(\omega) F_{2,S(A)}(A)e_i\in\R^n,\quad i\in[n].
$$
Enumerate the $2n$ vectors by
$$
w_i := v_i\quad(1\le i\le n),
\quad
w_{n + i} := u_i\quad(1\le i\le n).
$$

For $I\subset[2n]$ with $|I| = n$, define the corresponding symmetric cross-polytope
\begin{equation} \label{eq:PIdef}
\mathcal P_I(A;\omega) := \conv\bigl(\{\pm w_i(A;\omega): i\in I\}\bigr).
\end{equation}
Define the full union and its $k$-subunions:
\begin{equation} \label{eq:P-unions}
\mathcal P(A;\omega) := \bigcup_{\substack{I\subset[2n], |I| = n}}\mathcal P_I(A;\omega),
\quad
\mathcal P^{(k)}(A;\omega) := 
\bigcup_{\substack{I\subset[2n], |I| = n\\ |I\cap(n + [n])| = k}}\mathcal P_I(A;\omega),
\end{equation}
so that $\mathcal P(A;\omega) = \bigcup_{k = 0}^n \mathcal P^{(k)}(A;\omega)$.

\begin{lemma}[Bridge lemma] \label{lem:bridge}
For every $A\in\mathcal A_\varepsilon$ and every exposed realization $\omega$,
\begin{equation} \label{eq:bridge-inclusion}
\mathcal K_A(\rho;\omega)\subset 4\rho \mathcal P(A;\omega).
\end{equation}
Consequently,
\begin{equation} \label{eq:innerprob-def}
\gamma_n\bigl(\mathcal K_A(\rho;\omega)\bigr)\le p(A;\omega),
\quad
p(A;\omega) := \gamma_n\bigl(4\rho \mathcal P(A;\omega)\bigr),
\end{equation}
and for $p_k(A;\omega) := \gamma_n\bigl(4\rho \mathcal P^{(k)}(A;\omega)\bigr)$,
\begin{equation} \label{eq:p-split}
p(A;\omega)\le \sum_{k = 0}^n p_k(A;\omega).
\end{equation}
\end{lemma}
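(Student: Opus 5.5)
The plan is to prove the inclusion \eqref{eq:bridge-inclusion} in two moves: first enlarge the polytope $\Gamma A(B_1^n)$ to the absolute convex hull of all $2n$ vectors $w_1,\dots,w_{2n}$, and then use the $\ell_1$-sparsity principle to show that each point of that hull already lies in a sub-hull spanned by only $n$ of them. The measure statements then follow from monotonicity and a union bound.

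\emph{Step 1: splitting each column.} By \eqref{eq:GammaA-restrict}, \eqref{eq:GammaF-restrict} and $A = F_1(A)+F_2(A)$, the $i$-th column of $\Gamma_{S(A)}A_{S(A)}$ equals $v_i+u_i$. Hence
\[
\Gamma A(B_1^n)=\conv\{\pm(v_i+u_i): i\in[n]\}.
\]
Each generator can be written as $v_i+u_i=\tfrac12(2v_i)+\tfrac12(2u_i)$. This is a convex combination of points of $2\conv\{\pm w_j: j\in[2n]\}$, and that set is convex and symmetric. Therefore
\[
\mathcal K_A(\rho;\omega)=2\rho\,\Gamma A(B_1^n)\subset 4\rho\,\conv\{\pm w_j: j\in[2n]\}.
\]

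\emph{Step 2: reducing to $n$ generators.} Let $W=[w_1\cdots w_{2n}]\in\R^{n\times 2n}$, so that $\conv\{\pm w_j\}=W(B_1^{2n})$. Fix $y\in W(B_1^{2n})$. Lemma~\ref{lem:l1-sparse-existence}, applied with $M=W$, gives $a\in\R^{2n}$ with $Wa=y$, $\|a\|_1\le1$ and $|\supp(a)|\le\rank(W)\le n$. Choose any $I\supset\supp(a)$ with $I\subset[2n]$ and $|I|=n$; this is possible since $2n\ge n$. Then $y\in\conv\{\pm w_j: j\in I\}=\mathcal P_I(A;\omega)$. It follows that $W(B_1^{2n})\subset\mathcal P(A;\omega)$, and since dilation by $4\rho$ commutes with unions, \eqref{eq:bridge-inclusion} holds.

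\emph{Step 3: measures.} Inequality \eqref{eq:innerprob-def} follows from the monotonicity of $\gamma_n$; note that $\mathcal P$ is a finite union of compact sets, hence measurable. For \eqref{eq:p-split}, every $I$ with $|I|=n$ has some value $k=|I\cap(n+[n])|\in\{0,\dots,n\}$, so $\mathcal P=\bigcup_k\mathcal P^{(k)}$. Scaling by $4\rho$ and applying subadditivity of $\gamma_n$ gives the bound.

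The only substantive step is Step 2. It depends on the sparse-representation lemma holding for an arbitrary, possibly rank-deficient, matrix $W$, with support bounded by its rank. That is exactly the form in which the lemma was used in Lemma~\ref{lem:BM-to-A}, so I do not expect an obstacle there.
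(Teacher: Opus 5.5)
Your proposal is correct and matches the paper's proof. The paper also splits each column into its $K$- and $U$-parts to get $\mathcal K_A(\rho;\omega)\subset 4\rho\, W(B_1^{2n})$; it does this pointwise via $B_1^n\times B_1^n\subset 2B_1^{2n}$, where you work with the generators $v_i+u_i$. It then pads sparse $\ell_1$-representations to supports of size $n$ (through Corollary~\ref{cor:l1-decomp}, where you apply Lemma~\ref{lem:l1-sparse-existence} directly) and finishes with monotonicity and subadditivity of $\gamma_n$.
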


\begin{proof}
Fix $A$ and $\omega$. By definition,
$$
\mathcal K_A(\rho;\omega) = 2\rho \Gamma_{S(A)}(\omega) A_{S(A)}(B_1^n).
$$
Let $x\in A_{S(A)}(B_1^n)$. Then $x = A_{S(A)}\xi$ for some $\xi\in B_1^n $ and
$$
x = A_{S(A)}^{(K)}\xi + A_{S(A)}^{(U)}\xi
 = F_{1,S(A)}(A)\xi + F_{2,S(A)}(A)\xi
\in F_{S(A)}(A)\bigl(B_1^n \times B_1^n \bigr).
$$
Since $\|(\xi,\eta)\|_1 = \|\xi\|_1 + \|\eta\|_1$, one has $B_1^n\times B_1^n\subset 2B_1^{2n}$, hence
$$
A_{S(A)}(B_1^n)\subset 2 F_{S(A)}(A)({B_1^{2n}}).
$$
Applying $\Gamma_{S(A)}(\omega)$ and multiplying by $2\rho$ yields
$$
\mathcal K_A(\rho;\omega)\subset 4\rho \Gamma_{S(A)}(\omega) F_{S(A)}(A)({B_1^{2n}}).
$$
Set
$$
W := \Gamma_{S(A)}(\omega) F_{S(A)}(A)\in\R^{n\times 2n}.
$$
Then the right-hand side equals $4\rho W({B_1^{2n}})$, and the columns of $W$ are precisely $\{w_i\}_{i = 1}^{2n}$.

Let $d := \rank(W)\le n$. Applying Corollary~\ref{cor:l1-decomp} to $M = W$ (with $N = 2n$) gives
$$
W({B_1^{2n}})
\subset
\bigcup_{\substack{J\subset[2n], |J|\le d, \rank(W_J) = |J|}}
\conv\{\pm We_j: j\in J\}.
$$
Fix such a set $J$. Choose any $I\subset[2n]$ with $J\subset I$ and $|I| = n$ (possible since $|J|\le d\le n$).
By monotonicity of absolute convex hulls,
$$
\conv\{\pm We_j: j\in J\}\subset \conv\{\pm We_i: i\in I\} = \mathcal P_I(A;\omega).
$$
Therefore $W({B_1^{2n}})\subset \mathcal P(A;\omega)$, and hence
$$
\mathcal K_A(\rho;\omega)\subset 4\rho W({B_1^{2n}})\subset 4\rho \mathcal P(A;\omega),
$$
which is \eqref{eq:bridge-inclusion}. Taking Gaussian measure and using \eqref{eq:p-split} yields the remaining claims.
\end{proof}

\begin{remark}
Instead of applying Carath\'eodory pointwise, we use a rank-aware $\ell_1$-decomposition for $W(B_1^{2n})$ to obtain the set inclusion \eqref{eq:bridge-inclusion} in one step; compare with \cite[§3]{Tikhomirov2019}.
\end{remark}

%%%%%%%%%%%%%%%%%%%%%%%%%%%%%%%%%%%%%%%%%%%
\subsection{Global norm control event for $U$-vectors} \label{sec:global-U}

For the rest of the paper we fix $m = n^3$ and $\varepsilon^{-1} = \rho n^2$ (as in Lemma~\ref{lem:discretization}).

The $U$-part coefficient vectors satisfy the Euclidean bound \eqref{eq:U-l2}. We discretize the class of possible $U$-coefficients and control $\|\Gamma u\|_2$ uniformly.

\begin{definition}[Discrete coefficient class for $U$] \label{def:Ueps}
Let $\mathcal U_\varepsilon$ be the set of all vectors $u\in(\varepsilon\Z)^m$ such that
$$
|\supp(u)|\le n,
\quad
\|u\|_2^2\le \frac{1}{s}.
$$
\end{definition}

\begin{lemma}[Cardinality of $\mathcal U_\varepsilon$] \label{lem:Ueps-card}
Assume $m = n^3$ and $\varepsilon^{-1} = \rho n^2$ with $\rho\ge 1$. Then
$$
\log|\mathcal U_\varepsilon|\le C n\log(n\rho)
$$
for a universal constant $C>0$.
\end{lemma}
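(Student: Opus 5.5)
We will bound $|\mathcal U_\varepsilon|$ by a direct counting argument over supports and entry values, in the same spirit as Step 3 of the proof of Lemma~\ref{lem:discretization}. Every $u\in\mathcal U_\varepsilon$ is determined by its support $J\subset[m]$ with $|J|\le n$, together with the values $(u_j)_{j\in J}\in(\varepsilon\Z)^{J}$.

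\textbf{Supports.} By \eqref{eq:support-count-disc}, the number of admissible supports is at most $(Cm/n)^n$. With $m=n^3$ this is $(Cn^2)^n$, whose logarithm is at most $Cn\log n$.

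\textbf{Values on a fixed support.} Since $\|u\|_2^2\le 1/s\le 1$, every coordinate satisfies $|u_j|\le 1$. Hence each $u_j\in\varepsilon\Z\cap[-1,1]$, which leaves at most $2\varepsilon^{-1}+1\le 3\rho n^2$ choices per coordinate. For a fixed $J$ with $|J|\le n$ this gives at most $(3\rho n^2)^n$ vectors, with logarithm at most $n\log(3\rho n^2)\le Cn\log(n\rho)$, using $\rho\ge1$ and $n\ge 2$.

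\textbf{Conclusion.} Multiplying the two counts gives
$$
\log|\mathcal U_\varepsilon|\le Cn\log n+Cn\log(n\rho)\le C'n\log(n\rho).
$$

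No step is a genuine obstacle. The only point to note is that we do not need the finer $\ell_2$ constraint; the crude bound $|u_j|\le1$ already costs only a factor polynomial in $n\rho$ per coordinate, which is absorbed into the logarithm. A sharper count using the $\ell_2$-ball volume is therefore unnecessary.
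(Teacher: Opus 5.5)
Your proof is correct and follows essentially the same route as the paper: you count supports, multiply by at most $O(\varepsilon^{-1})$ admissible values per coordinate using only $|u_j|\le\|u\|_2\le 1$, and ignore the finer $\ell_2$ constraint. The one cosmetic difference is that you reuse the support bound \eqref{eq:support-count-disc}, giving $(Cm/n)^n$, whereas the paper uses the cruder $\binom{m}{r}\le (em)^r$ and sums over $r\le n$; both yield $\log|\mathcal U_\varepsilon|\le Cn\log(n\rho)$.
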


\begin{proof}
Since $s\ge 1$, any $u\in\mathcal U_\varepsilon$ satisfies $\|u\|_\infty\le \|u\|_2\le 1/\sqrt{s}\le 1$, hence every coordinate of $u$ lies in $\varepsilon\Z\cap[-1,1]$. We overcount $|\mathcal U_\varepsilon|$ by ignoring the remaining $\ell_2$ restriction beyond this crude coordinate bound. For each $1\le r\le n$, the number of supports of size $r$ is $\binom{m}{r}\le (em/r)^r\le (em)^r$. Given a fixed support of size $r$, each nonzero coordinate must lie in $\varepsilon\Z\cap[-1,1]$, hence has at most $C/\varepsilon$ choices. Thus the number of vectors with support size $r$ is at most $(C m/\varepsilon)^r$. Summing over $0\le r\le n$ yields
$$
|\mathcal U_\varepsilon|\le \sum_{r = 0}^n (C m/\varepsilon)^r \le (n + 1)(C m/\varepsilon)^n \le (C m/\varepsilon)^n,
$$
absorbing the factor $n + 1$ into $C^n$. Therefore
$$
\log|\mathcal U_\varepsilon|\le n\log(C m/\varepsilon)\le C n\log(n\rho),
$$
since $m = n^3$ and $\varepsilon^{-1} = \rho n^2$.
\end{proof}

\begin{definition}[Global $U$-norm event] \label{def:E3}
Let
\begin{equation} \label{eq:L-def}
L := C_0 \sqrt{\frac{n}{s} \log(n\rho)}
\end{equation}
for a sufficiently large universal constant $C_0>0$.
Define the global event
$$
\mathcal E^{(U)} := \Bigl\{\max_{u\in\mathcal U_\varepsilon}\|\Gamma u\|_2\le L\Bigr\}.
$$
\end{definition}

\begin{lemma}[Probability of $\mathcal E^{(U)}$] \label{lem:E3prob}
There exists a universal $c>0$ such that
$$
\PP\bigl((\mathcal E^{(U)})^c\bigr)\le \exp\bigl(-c n\log(n\rho)\bigr).
$$
\end{lemma}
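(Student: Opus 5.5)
The plan is a union bound over the finite class $\mathcal U_\varepsilon$, with a Gaussian tail estimate for each fixed vector.

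\textbf{Single vector.} Fix $u\in\mathcal U_\varepsilon$. Then $\Gamma u=\sum_j u_j g_j$ is distributed as $\|u\|_2\, g$ with $g\sim N(0,\Id_n)$. Since $\|u\|_2^2\le 1/s$, we get
$$
\PP\{\|\Gamma u\|_2> t\}\le \PP\{\|g\|_2> t\sqrt s\}.
$$
The map $x\mapsto\|x\|_2$ is $1$-Lipschitz and $\EE\|g\|_2\le\sqrt n$. Gaussian concentration (the standard tail bound recorded in Appendix~\ref{app:standard}) therefore gives $\PP\{\|g\|_2\ge \sqrt n+\tau\}\le e^{-\tau^2/2}$.

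\textbf{Choice of $t$.} Take $t=L$, so that $t\sqrt s=C_0\sqrt{n\log(n\rho)}$. Since $\log(n\rho)\ge\log n\ge 1$ for $n\ge3$, we have $t\sqrt s-\sqrt n\ge (C_0-1)\sqrt{n\log(n\rho)}$. Hence
$$
\PP\{\|\Gamma u\|_2>L\}\le \exp\bigl(-\tfrac{(C_0-1)^2}{2}\, n\log(n\rho)\bigr).
$$

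\textbf{Union bound.} By Lemma~\ref{lem:Ueps-card}, $\log|\mathcal U_\varepsilon|\le C n\log(n\rho)$, so
$$
\PP\bigl((\mathcal E^{(U)})^c\bigr)\le \exp\Bigl(\bigl(C-\tfrac{(C_0-1)^2}{2}\bigr)\, n\log(n\rho)\Bigr).
$$
Choosing $C_0$ large enough that $(C_0-1)^2/2\ge C+c$ (for instance $c=1$) gives the claim.

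\textbf{Main point to check.} The only real care needed is that the constant in Lemma~\ref{lem:Ueps-card} is universal and independent of $s$, and that $C_0$ is fixed after it. Both hold: the cardinality bound ignored the $\ell_2$ constraint entirely, and the single-vector tail scales exactly with the normalisation $\sqrt{1/s}$ built into $L$. Nothing else is delicate; no chaining or net argument is needed, because $\mathcal U_\varepsilon$ is already finite.
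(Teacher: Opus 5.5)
Your proof is correct and is essentially the paper's argument: a Gaussian norm tail for each fixed $u$ (using $\|u\|_2\le 1/\sqrt{s}$, so that $L/\|u\|_2\ge C_0\sqrt{n\log(n\rho)}$), followed by a union bound over $\mathcal U_\varepsilon$ using the $\exp(Cn\log(n\rho))$ cardinality bound, with $C_0$ chosen large relative to the counting constant. Your explicit exponent $(C_0-1)^2/2$ simply makes precise the paper's ``for $C_0$ large enough'' step.
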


\begin{proof}
Fix $u\in\mathcal U_\varepsilon$. If $u=0$, then $\Gamma u=0$ and hence $\PP(\|\Gamma u\|_2>L)=0$. Assume $u\neq 0$. Then $\Gamma u\sim N(0,\|u\|_2^2\Id_n)$, i.e. $\Gamma u\stackrel{d}{=}\|u\|_2 G$ for $G\sim N(0,\Id_n)$, and since $\|u\|_2\le 1/\sqrt{s}$ we have
$$
\PP(\|\Gamma u\|_2> L) = \PP\bigl(\|G\|_2> L/\|u\|_2\bigr),\quad G\sim N(0,\Id_n).
$$
Since $L/\|u\|_2\ge C_0\sqrt{n\log(n\rho)}$, Lemma~\ref{lem:gauss-norm-tail} yields $\PP(\|\Gamma u\|_2> L)\le \exp(-c_1 n\log(n\rho))$ for $C_0$ large enough. We also record the crude counting bound
\begin{equation} \label{eq:Ueps-card-inline}
|\mathcal U_\varepsilon|
\le \sum_{r = 0}^n \binom{m}{r}\Big(\frac{C}{\varepsilon}\Big)^r
\le (n + 1)\Big(\frac{C m}{\varepsilon}\Big)^n
\le \Big(\frac{C m}{\varepsilon}\Big)^n,
\end{equation}
where we used $\binom{m}{r}\le m^r$ and absorbed the factor $n + 1$ into $C^n$.

By the union bound and \eqref{eq:Ueps-card-inline},
$$
\PP\bigl((\mathcal E^{(U)})^c\bigr)
\le |\mathcal U_\varepsilon| \exp(-c_1 n\log(n\rho))
\le \exp\Big(n\log\Big(\frac{C m}{\varepsilon}\Big)-c_1 n\log(n\rho)\Big).
$$
Since $m = n^3$ and $\varepsilon^{-1} = \rho n^2$, we have $\log(Cm/\varepsilon)\le C\log(n\rho)$, so the right-hand side is
$\le \exp\bigl(-c n\log(n\rho)\bigr)$ after adjusting constants.
\end{proof}

On $\mathcal E^{(U)}$, for every $A\in\mathcal A_\varepsilon$ and the induced exposed realization $\omega = \Gamma_{S(A)}$,
\begin{equation} \label{eq:U-gen-bound}
\|u_i(A;\omega)\|_2\le L,\quad i\in[n].
\end{equation}
Indeed, $F_2(A)e_i$ is supported on $\supp(A^{(i)})$ (hence has support size $\le n$) and satisfies
$\|F_2(A)e_i\|_2^2\le 1/s$ by \eqref{eq:U-l2}, so $F_2(A)e_i\in\mathcal U_\varepsilon$; then \eqref{eq:GammaF-restrict} gives
$u_i(A;\omega) = \Gamma F_2(A)e_i$, and the claim follows from the definition of $\mathcal E^{(U)}$.

%%%%%%%%%%%%%%%%%%%%%%%%%%%%%%%%%%%%%%%%%%%
%%%%%%%%%%%%%%%%%%%%%%%%%%%%%%%%%%%%%%%%%%%
\section{Few $U$-generators (small-$U$ regime)} \label{sec:small}

In this section we control the contribution of the sub-unions $\mathcal P^{(k)}(A;\omega)$ with
$k\le \tilde s$, i.e.
$$
p_k(A;\omega) = \gamma_n\bigl(4\rho \mathcal P^{(k)}(A;\omega)\bigr),\quad 0\le k\le \tilde s.
$$
Compared with \cite{Tikhomirov2019}, we avoid a ``tilt for all permutations'' uniformization. Instead we use two ingredients:
\begin{enumerate}[leftmargin = 2.2em]
\item a \emph{random suppression (averaging)} step on the $K$--generators, which reduces the Gaussian measure of a full-dimensional cross-polytope to the \emph{average} Gaussian measure of certain \emph{suppressed} cross-polytopes (Lemma~\ref{lem:random-suppression});
\item a \emph{block-tail} bound for Gram--Schmidt distances of Gaussian images, uniform over all choices of a ``tail block'' $J$ (Lemma~\ref{lem:block-tail-distances} and Appendix~\ref{app:suppr-proof}).
\end{enumerate}
Combined with the Gram--Schmidt/volume estimate (Lemma~\ref{lem:det-shrink-clean}), this yields an $\exp(-c r)$ bound for each relevant polytope and hence a union bound over the $\binom{n}{k}^2$ choices for $k\le \tilde s$.

Recall the notation from Subsection~\ref{sec:bridge}. For $A\in\mathcal A_\varepsilon$ and an exposed realization $\omega = \Gamma_{S(A)}$, we have vectors $v_i(A;\omega),u_i(A;\omega)\in\R^n$ and cross-polytopes
$$
\mathcal P_I(A;\omega) = \conv\bigl(\{\pm w_i(A;\omega): i\in I\}\bigr),\quad I\subset[2n], |I| = n,
$$
where $w_i = v_i$ for $i\le n$ and $w_{n + i} = u_i$ for $i\le n$. For $k\in\{0,1,\dots,n\}$ we recall
$$
\mathcal P^{(k)}(A;\omega)
 = \bigcup_{\substack{I\subset[2n], |I| = n\\ |I\cap(n + [n])| = k}}\mathcal P_I(A;\omega),
\quad
p_k(A;\omega) := \gamma_n\bigl(4\rho \mathcal P^{(k)}(A;\omega)\bigr).
$$

%%%%%%%%%%%%%%%%%%%%%%%%%%%%%%%%%%%%%%%%%%%
\subsection{A Gram--Schmidt volume bound}

\begin{lemma}[Small Gaussian measure from $d$ small Gram--Schmidt diagonals]
 \label{lem:det-shrink-clean}
Let $n\ge 1$ and let $x_1,\dots,x_n\in\R^n$. Set
$$
P := \conv\{\pm x_1,\dots,\pm x_n\}\subset\R^n.
$$
Fix an integer $1\le d\le n$ and a number $h>0$.
Assume that $x_1,\dots,x_n$ are linearly independent and that, in this order,
$$
\dist\bigl(x_{n-d + i},\spann\{x_1,\dots,x_{n-d + i-1}\}\bigr)\le h
\quad\text{ for all } i = 1,\dots,d.
$$
Then
$$
\gamma_n(P)\le \Bigl(\frac{C h}{d}\Bigr)^{d}
$$
for a universal constant $C>0$.
(If $x_1,\dots,x_n$ are linearly dependent, then $\gamma_n(P) = 0$.)
\end{lemma}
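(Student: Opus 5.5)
Let $E := \spann\{x_1,\dots,x_{n-d}\}$, an $(n-d)$-dimensional subspace, and let $Q$ denote the orthogonal projection onto $E^\perp$, so $\dim E^\perp = d$. The plan is to bound $\gamma_n(P)$ by the Gaussian measure of the projection $Q(P)$ in $E^\perp$: since $\gamma_n$ is a product of the standard Gaussians on $E$ and $E^\perp$, we have $\gamma_n(P)\le \gamma_n(Q^{-1}(Q P)) = \gamma_d(QP)$, where $\gamma_d$ is the standard Gaussian measure on $E^\perp\cong\R^d$. The projections of $x_1,\dots,x_{n-d}$ vanish, so
$$
QP = \conv\{\pm Qx_{n-d+1},\dots,\pm Qx_n\},
$$
a $d$-dimensional cross-polytope generated by $y_i := Qx_{n-d+i}$, $i=1,\dots,d$.

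Next I compute the volume of $QP$. The distances of the $y_i$ to the span of their predecessors are exactly the Gram--Schmidt diagonals of the original vectors: $\dist(y_i,\spann\{y_1,\dots,y_{i-1}\}) = \dist(x_{n-d+i},\spann\{x_1,\dots,x_{n-d+i-1}\})\le h$, because projecting away $E$ and then the $y_j$'s is the same as projecting away $\spann\{x_1,\dots,x_{n-d+i-1}\}$. Hence $|\det[y_1\cdots y_d]| = \prod_i \dist(\cdot)\le h^d$, and
$$
\mathrm{vol}_d(QP) = \frac{2^d}{d!}|\det[y_1\cdots y_d]|\le \frac{2^d h^d}{d!}.
$$

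Finally, the Gaussian density is bounded by $(2\pi)^{-d/2}$, so $\gamma_d(QP)\le (2\pi)^{-d/2}2^dh^d/d!\le (Ch/d)^d$ using $d!\ge (d/e)^d$. This gives the claim, with $C = 2e/\sqrt{2\pi}$. For dependent vectors $P$ lies in a proper subspace and has measure zero. No step seems a real obstacle; the only care needed is the identification of the projected Gram--Schmidt distances with the original ones, which is immediate from $E\subset\spann\{x_1,\dots,x_{n-d+i-1}\}$.
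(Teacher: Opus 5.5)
Your proposal is correct and is essentially the paper's argument: the paper's subspace $H=\spann\{e_{n-d+1},\dots,e_n\}$ spanned by the last $d$ Gram--Schmidt vectors is exactly your $E^\perp$, and both proofs bound $\gamma_n(P)$ by the Gaussian measure of the projected $d$-dimensional cross-polytope. Both then bound its volume by $2^d h^d/d!$ and use the density bound $(2\pi)^{-d/2}$. The only cosmetic difference is that the paper reads off $|\det Y|\le h^d$ from the upper-triangular form of the projected generators, while you identify the projected predecessor distances with the original ones; these are the same computation.
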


\begin{proof}
The argument is standard; we include it for completeness.

If $x_1,\dots,x_n$ are linearly dependent then $P$ is contained in a proper subspace of $\R^n$,
hence $\gamma_n(P) = 0$.

Apply Gram--Schmidt to $(x_i)_{i = 1}^n$. This produces an orthonormal basis $(e_1,\dots,e_n)$
and a unique upper-triangular coefficient array $(a_{ji})_{1\le j\le i\le n}$ such that
$$
x_i = \sum_{j = 1}^i a_{ji}e_j,
\quad
a_{ii} = \dist\bigl(x_i,\spann\{x_1,\dots,x_{i-1}\}\bigr).
$$
Let $H := \spann\{e_{n-d + 1},\dots,e_n\}$, so $\dim(H) = d$, and let $\pi_H$ be the orthogonal projection onto $H$.
Then $\pi_H x_i = 0$ for $i\le n-d$, and therefore
$$
\pi_H P = \conv\{\pm \pi_H x_{n-d + 1},\dots,\pm \pi_H x_n\}\subset H.
$$

Let $G\sim N(0,\Id_n)$. Since $\{G\in P\}\subset\{\pi_H G\in \pi_H P\}$ and $\pi_H G$ is a standard Gaussian vector in $H$,
we obtain
$$
\gamma_n(P) = \PP\{G\in P\}\le \PP\{\pi_H G\in \pi_H P\} = \gamma_H(\pi_H P).
$$
Identify $H\simeq\R^d$ by an isometry, so the right-hand side equals $\gamma_d(\pi_H P)$.

In the basis $(e_{n-d + 1},\dots,e_n)$ of $H$, the $d\times d$ matrix $Y$ whose columns are
$\pi_H x_{n-d + 1},\dots,\pi_H x_n$ is upper triangular with diagonal entries
$$
\diag(Y) = (a_{n-d + 1,n-d + 1},\dots,a_{n,n}),
$$
hence $|\det Y|\le h^d$ by the assumption on the last $d$ Gram--Schmidt diagonals.

We have $\pi_H P = Y({B_1^{d}})$, so
$$
\mathrm{vol}_d(\pi_H P) = |\det Y| \mathrm{vol}_d({B_1^{d}})
 = |\det Y| \frac{2^d}{d!}
\le \frac{2^d}{d!} h^d.
$$
Using that the Gaussian density on $\R^d$ is bounded by $(2\pi)^{-d/2}$ pointwise, we have
$$
\gamma_d(\pi_H P)\le (2\pi)^{-d/2} \mathrm{vol}_d(\pi_H P)
\le (2\pi)^{-d/2} \frac{2^d}{d!} h^d.
$$
Finally, $d!\ge (d/e)^d$, hence $\frac{2^d}{d!}\le (2e/d)^d$. Absorbing the harmless factor $(2\pi)^{-d/2}$ into constants yields
$$
\gamma_n(P)\le \gamma_d(\pi_H P)\le \Bigl(\frac{C h}{d}\Bigr)^d.
$$
\end{proof}

%%%%%%%%%%%%%%%%%%%%%%%%%%%%%%%%%%%%%%%%%%%
\subsection{Random suppression on the $K$-generators}

The next lemma is a simple averaging device: it bounds the Gaussian measure of a cross-polytope by the
average Gaussian measure of \emph{suppressed} cross-polytopes in which a random subset of the $K$--generators
is scaled down by a factor $r/p$.

\begin{lemma}[Random suppression] \label{lem:random-suppression}
Let $n\ge 1$ and let $1\le p\le n$, $q := n-p$. Let $y_1,\dots,y_p,z_1,\dots,z_q\in\R^n$ be linearly independent and set
$$
P := \conv\{\pm y_1,\dots,\pm y_p,\pm z_1,\dots,\pm z_q\}\subset\R^n.
$$
Fix an integer $1\le r\le p$. For each $J\subset[p]$ with $|J| = r$, define the suppressed polytope
$$
P_J := 
\conv\Bigl(
\{\pm y_i: i\in[p]\setminus J\}
 \cup\
\{\pm \frac{r}{p}y_i: i\in J\}
 \cup\
\{\pm z_1,\dots,\pm z_q\}
\Bigr).
$$
Then for every $t>0$,
\begin{equation} \label{eq:random-suppression}
\gamma_n(tP)
\le
\frac{2}{\binom{p}{r}}
\sum_{\substack{J\subset[p], |J| = r}}
\gamma_n\bigl(4t P_J\bigr).
\end{equation}
\end{lemma}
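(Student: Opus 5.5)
The plan is a pointwise averaging argument followed by Fubini. The key observation is that every point of $tP$ lies in $4tP_J$ for at least half of the $r$-subsets $J\subset[p]$. Integrating this indicator inequality against $\gamma_n$ then gives \eqref{eq:random-suppression} directly.

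\emph{Coordinates.} Since $y_1,\dots,y_p,z_1,\dots,z_q$ are $n$ linearly independent vectors in $\R^n$, they form a basis. So every $x\in\R^n$ has unique coefficients $x=\sum_{i\le p}a_iy_i+\sum_{j\le q}b_jz_j$, depending linearly (hence measurably) on $x$. In these coordinates,
$$
x\in tP \iff \sum_i|a_i|+\sum_j|b_j|\le t .
$$
The suppressed polytope $P_J$ is the image of the weighted $\ell_1$-ball in which the coordinates $i\in J$ carry weight $p/r$. Hence
$$
x\in 4tP_J \iff \frac{p}{r}\sum_{i\in J}|a_i|+\sum_{i\notin J}|a_i|+\sum_j|b_j|\le 4t .
$$

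\emph{Pointwise bound.} Fix $x\in tP$ and let $J$ be a uniformly random $r$-subset of $[p]$. Each index lies in $J$ with probability $r/p$, so
$$
\EE\Bigl[\frac{p}{r}\sum_{i\in J}|a_i|\Bigr]=\sum_i|a_i|\le t .
$$
By Markov's inequality, $\PP\bigl(\frac{p}{r}\sum_{i\in J}|a_i|>3t\bigr)\le 1/3$. The remaining terms always satisfy $\sum_{i\notin J}|a_i|+\sum_j|b_j|\le t$. Therefore $x\in 4tP_J$ with probability at least $2/3\ge 1/2$, which gives
$$
\mathbf 1_{tP}(x)\le \frac{2}{\binom pr}\sum_{|J|=r}\mathbf 1_{4tP_J}(x)\qquad\text{for all }x\in\R^n .
$$
For $x\notin tP$ this inequality is trivial.

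\emph{Integration.} Integrating this pointwise inequality against $\gamma_n$ (a finite sum, so no interchange issue) yields \eqref{eq:random-suppression}.

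There is no real obstacle. The only points needing care are the uniqueness of coordinates, which uses the linear independence hypothesis, and the exact description of $P_J$ as a weighted $\ell_1$-ball in the same basis; the Markov step then has a factor $4$ of room to spare.
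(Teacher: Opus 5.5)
Your proof is correct and follows essentially the same route as the paper. Both pass to coordinates $a=X^{-1}x$ in the basis $(y_i,z_j)$, draw a uniformly random $r$-subset $J$, apply Markov's inequality to $\sum_{i\in J}|a_i|$, and integrate the resulting pointwise indicator inequality. The differences are cosmetic: you describe $4tP_J$ directly as a weighted $\ell_1$-ball instead of exhibiting $a'$ with $Xa=4X_Ja'$, and your Markov threshold of $3t$ on the rescaled sum gives probability $\ge 2/3$ where the paper's threshold $2rt/p$ on the unscaled sum gives $\ge 1/2$.
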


\begin{proof}
Let $X := [y_1 \cdots y_p z_1 \cdots z_q]\in\R^{n\times n}$. By linear independence, $X$ is invertible and
$$
P = X(B_1^n),\quad tP = X(tB_1^n).
$$
Let $G\sim N(0,\Id_n)$ and define $a := X^{-1}G\in\R^n$.
Then
$$
\{G\in tP\} = \{\|a\|_1\le t\}.
$$
Write $a = (a^{(K)},a^{(U)})$ with $a^{(K)}\in\R^p$ the first $p$ coordinates and $a^{(U)}\in\R^q$ the last $q$.

Fix $t>0$ and assume $\|a\|_1\le t$.
Let $J$ be uniformly random among subsets of $[p]$ of cardinality $r$, independent of $G$.
Set $S_J := \sum_{i\in J}|a^{(K)}_i|$.
By symmetry of $J$,
$$
\EE_J S_J
 = 
\frac{r}{p}\sum_{i = 1}^p |a_i^{(K)}|
\le
\frac{r}{p}\|a\|_1
\le
\frac{rt}{p}.
$$
By Markov's inequality,
$$
\PP_J\Bigl(S_J>\frac{2rt}{p}\Bigr)\le \frac12,
\quad\text{ hence }\quad
\PP_J\Bigl(S_J\le \frac{2rt}{p}\Bigr)\ge \frac12.
$$

Fix a subset $J$ with $S_J\le \frac{2rt}{p}$ and define a new coefficient vector $a'\in\R^n$ by
$$
a'_i := 
\begin{cases}
\frac{a_i}{4}, & 1\le i\le p, i\notin J,\\
\frac{p}{4r}a_i, & 1\le i\le p, i\in J,\\
\frac{a_i}{4}, & p + 1\le i\le n.
\end{cases}
$$
Then
\begin{align*}
\|a'\|_1
& = 
\frac14\sum_{\substack{1\le i\le p\\ i\notin J}}|a_i|
 + 
\frac{p}{4r}\sum_{i\in J}|a_i|
 + 
\frac14\sum_{i = p + 1}^n|a_i|\\
& = 
\frac14\Bigl(\|a\|_1 - S_J\Bigr) + \frac{p}{4r}S_J
\le
\frac14\|a\|_1 + \Bigl(\frac{p}{4r}-\frac14\Bigr)\frac{2rt}{p}.
\end{align*}
Using $\|a\|_1\le t$ and simplifying,
$$
\|a'\|_1
\le
\frac{t}{4} + \frac{p-r}{2p}t
 = 
\Bigl(\frac34 - \frac{r}{2p}\Bigr)t
<
t.
$$

Next, define the matrix $X_J\in\R^{n\times n}$ obtained from $X$ by replacing each column $y_i$ with $(r/p)y_i$ for $i\in J$,
and leaving all other columns unchanged. Then by construction,
$$
X a = 4X_J a'.
$$
Therefore,
$$
G = Xa = 4X_J a' \in 4X_J(tB_1^n) = 4t X_J(B_1^n) = 4t P_J.
$$
We have proved: on the event $\{G\in tP\}$, with conditional probability at least $1/2$ over $J$ one has $G\in 4tP_J$.
Equivalently,
$$
\mathbf 1_{\{G\in tP\}}
\le
2 \EE_J \mathbf 1_{\{G\in 4tP_J\}}.
$$
Taking expectations over $G$ and using Fubini yields \eqref{eq:random-suppression}.
\end{proof}

%%%%%%%%%%%%%%%%%%%%%%%%%%%%%%%%%%%%%%%%%%%
\subsection{Block-tail distances for Gaussian images}

We record the probabilistic input controlling Gram--Schmidt distances in a \emph{single block order}. This is a simplified variant of the distance-to-spans arguments in \cite{Tikhomirov2019}, specialized to one fixed block re-ordering
(the ordering that lists $[p]\setminus J$ first in increasing order,
followed by $J$ in increasing order) and with a self-contained proof.

\begin{lemma}[Block-tail distance bound] \label{lem:block-tail-distances}
There exist universal constants $C,c>0$ with the following property.
Assume $n\ge C$, $n/2\le p\le n$, $1\le r\le p/2$, and $\tau\ge C$.
Fix a deterministic $m\times p$ matrix $B$ of full rank $p$ such that each column has Euclidean norm at most one.
Let
$$
H_i := \Gamma B e_i \in \R^n,\quad i\in[p],
$$
i.e. $H_i$ is the Gaussian image of the $i$th column of $B$. Fix a subset $J\subset[p]$ with $|J| = r$ and let $\sigma_J$ be the permutation of $[p]$ that lists $[p]\setminus J$ in increasing order followed by $J$ in increasing order. Define the ordered family $H_i' := H_{\sigma_J(i)}$ and, for $i\ge 2$, the spans
$$
E_{i-1} := \spann\{H_1',\dots,H_{i-1}'\}.
$$
Set
$$
N := n-p + r \quad (\ge r).
$$
Let $\mathcal E_{B,J}$ be the event that among the last $r$ indices,
at least $r/2$ satisfy
$$
\dist(H_i',E_{i-1})\le \tau\sqrt{N}.
$$
Then
\begin{equation} \label{eq:block-tail-prob-fixedJ}
\PP(\mathcal E_{B,J})\ge 1-\exp(-c\tau^2 Nr).
\end{equation}
Moreover, writing $\mathcal J_{p,r} := \{J\subset[p]: |J| = r\}$,
\begin{equation} \label{eq:block-tail-prob-allJ}
\PP\Bigl(\bigcap_{J\in\mathcal J_{p,r}}\mathcal E_{B,J}\Bigr)
\ge 1-\binom{p}{r}\exp(-c\tau^2 Nr).
\end{equation}
\end{lemma}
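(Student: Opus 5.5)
The plan is to reduce to a standard Gaussian matrix via a Cholesky factorization, so that the relevant Gram--Schmidt distances become independent chi variables with at most $N$ degrees of freedom. The event $\mathcal E_{B,J}$ depends only on the law of the ordered family $(H_1',\dots,H_p')$, so we may replace it by any family with the same joint law.

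\textbf{Step 1: Cholesky representation.} Let $Y := \Gamma B\,\Pi_J\in\R^{n\times p}$, where $\Pi_J$ is the permutation matrix of $\sigma_J$. Its columns are $H_1',\dots,H_p'$. The rows of $Y$ are i.i.d.\ $N(0,\Sigma)$ with $\Sigma := \Pi_J^\top B^\top B\,\Pi_J$, which is positive definite because $B$ has full rank $p$. Write $\Sigma = R^\top R$ with $R$ upper triangular and positive diagonal (Cholesky). Then $Y \stackrel{d}{=} G R$, where $G\in\R^{n\times p}$ has i.i.d.\ $N(0,1)$ entries. So it suffices to prove the statement for $H_i' := GRe_i$.

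\textbf{Step 2: reduction to unscaled Gaussian distances.} Since $R$ is upper triangular and invertible, $\spann\{GRe_1,\dots,GRe_{i-1}\} = \spann\{Ge_1,\dots,Ge_{i-1}\} =: E_{i-1}$. Also $GRe_i = R_{ii}Ge_i + (\text{element of } E_{i-1})$. Hence
$$
\dist(H_i',E_{i-1}) = R_{ii}\,\dist(Ge_i,E_{i-1}) \le \dist(Ge_i,E_{i-1}).
$$
The inequality holds because $R_{ii}\le \|Re_i\|_2 = \|B\Pi_J e_i\|_2 \le 1$ by the column-norm assumption.

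\textbf{Step 3: independence and chi tails.} The quantities $D_i := \dist(Ge_i,E_{i-1})$ are the Gram--Schmidt diagonals of a standard Gaussian matrix. By rotation invariance, conditionally on $Ge_1,\dots,Ge_{i-1}$, the variable $D_i$ is $\chi_{n-i+1}$-distributed (a.s.\ $\dim E_{i-1}=i-1$). In particular $D_1,\dots,D_p$ are independent. For the last $r$ indices, $i\ge p-r+1$, so $n-i+1\le n-p+r = N$. By Lemma~\ref{lem:gauss-norm-tail} (or the elementary chi-square tail), $\PP(D_i>\tau\sqrt N)\le \exp(-c\tau^2N)$ for $\tau\ge C$.

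\textbf{Step 4: counting.} If $\mathcal E_{B,J}$ fails, then more than $r/2$ of the last $r$ indices have $D_i>\tau\sqrt N$. By independence and a union bound over the at most $2^r$ possible index sets,
$$
\PP(\mathcal E_{B,J}^c)\le 2^r\exp(-c\tau^2Nr/2)\le \exp(-c'\tau^2 Nr),
$$
where the factor $2^r$ is absorbed using $\tau\ge C$ and $N\ge 1$. This gives \eqref{eq:block-tail-prob-fixedJ}. A union bound over the $\binom{p}{r}$ sets $J\in\mathcal J_{p,r}$ then yields \eqref{eq:block-tail-prob-allJ}.

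The main obstacle is Step 2: the columns $H_i$ are correlated through $B$, so a naive bound only gives $\|H_i\|_2\sim\sqrt n$. The triangular Cholesky change of variables is what removes the correlation. It preserves all the nested spans $E_{i-1}$ and contracts each distance by $R_{ii}\le 1$, which yields $\sqrt N$ instead of $\sqrt n$.
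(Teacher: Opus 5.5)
Your proposal is correct and follows essentially the same route as the paper. The paper uses an exact reduced QR factorization $B\Pi_J = QR$ with $\Gamma Q$ standard Gaussian; your Cholesky factor of $\Pi_J^\top B^\top B\,\Pi_J$ is the same $R$, and you simply pass through equality in law. From there both arguments use the contraction $R_{ii}\le\|B\Pi_J e_i\|_2\le 1$, chi tails in dimension $n-i+1\le N$, and a union bound over sets of at least $\lceil r/2\rceil$ bad tail indices. The only cosmetic difference is that you invoke full independence of the Gram--Schmidt diagonals (Bartlett decomposition), whereas the paper iterates the conditional tail bound via the tower property.
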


\begin{proof}
Fix $B$ and $J$. Let $P_J$ be the permutation matrix corresponding to $\sigma_J$ and set $B':=BP_J$.
Then $B'e_i = Be_{\sigma_J(i)}$, so the ordered family $(H_1',\dots,H_p')$ equals
$$
H_i'=\Gamma B'e_i,\quad i\in[p].
$$
Since $B'$ satisfies the same assumptions as $B$ (full rank $p$ and column norms $\le 1$), it suffices to prove the lemma for $B'$ and
the identity permutation; we do so below, and for simplicity we write $H_i$ in place of $H_i'$ and $B$ in place of $B'$.
With this convention, the “tail block’’ is the index set $\{p-r+1,\dots,p\}$.

\subsubsection*{Step 1: reduction to i.i.d.\ Gaussian columns.}
Take a reduced QR factorization $B=QR$, where $Q\in\R^{m\times p}$ has orthonormal columns and
$R\in\R^{p\times p}$ is upper triangular with strictly positive diagonal entries. Set
$$
X:=\Gamma Q\in\R^{n\times p},
\quad x_i:=Xe_i\in\R^n\quad(i\in[p]).
$$

We claim that the entries of $X$ are i.i.d.\ $N(0,1)$, and consequently $x_1,\dots,x_p$ are independent $N(0,\Id_n)$ vectors.
Indeed, let $\gamma^{(1)},\dots,\gamma^{(n)}\in\R^m$ denote the rows of $\Gamma$.
Since $\Gamma$ has i.i.d.\ standard Gaussian entries, the row vectors $\gamma^{(1)},\dots,\gamma^{(n)}$ are independent and
each $\gamma^{(\ell)}\sim N(0,\Id_m)$.
The $\ell$th row of $X$ is $\gamma^{(\ell)}Q\in\R^p$. As a linear image of a Gaussian vector, $\gamma^{(\ell)}Q$ is Gaussian with mean $0$
and covariance
$$
\EE[(\gamma^{(\ell)}Q)^\top(\gamma^{(\ell)}Q)]
=Q^\top\EE[(\gamma^{(\ell)})^\top\gamma^{(\ell)}]Q
=Q^\top \Id_m Q
=Q^\top Q
=\Id_p.
$$
Therefore $\gamma^{(\ell)}Q\sim N(0,\Id_p)$; since it is multivariate Gaussian with covariance $\Id_p$, its coordinates are independent
$N(0,1)$. Because the rows $\gamma^{(\ell)}$ are independent, the rows of $X$ are independent, hence all entries of $X$ are i.i.d.\ $N(0,1)$.
In particular, the columns $x_1,\dots,x_p$ are independent and each $x_i\sim N(0,\Id_n)$.

Finally, since $B=QR$,
$$
H:=\Gamma B=\Gamma(QR)=(\Gamma Q)R=XR,
\quad\text{so}\quad
H_i=\sum_{j\le i}R_{ji}x_j\quad(i\in[p]).
$$

\subsubsection*{Step 2: distances to spans.}
For $i\ge 2$, let $F_{i-1}:=\spann\{x_1,\dots,x_{i-1}\}$.
Since $R$ is upper triangular with strictly positive diagonal, the principal block $R_{[i-1]\times[i-1]}$ is invertible.
Moreover, $H_1,\dots,H_{i-1}$ are linear combinations of $x_1,\dots,x_{i-1}$ with an invertible change of coordinates, hence
$$
\spann\{H_1,\dots,H_{i-1}\}=\spann\{x_1,\dots,x_{i-1}\}=F_{i-1}.
$$
Writing $H_i=\sum_{j<i}R_{ji}x_j+R_{ii}x_i$ with $\sum_{j<i}R_{ji}x_j\in F_{i-1}$ gives
$$
\dist(H_i,F_{i-1})=R_{ii} \|\pi_{F_{i-1}^\perp}x_i\|_2.
$$
Furthermore, $|R_{ii}|\le \|Re_i\|_2=\|QRe_i\|_2=\|Be_i\|_2\le 1$ by the assumption on column norms, hence
\begin{equation}\label{eq:block-tail-dist-upper-fixed}
\dist(H_i,F_{i-1})\le \|\pi_{F_{i-1}^\perp}x_i\|_2.
\end{equation}

\subsubsection*{Step 3: conditional tail bound for a single tail index.}
Fix $i\in\{p-r+1,\dots,p\}$ and set $d_i:=\dim(F_{i-1}^\perp)$.
Since $i-1\le p-1\le n-1$ and the joint law of $(x_1,\dots,x_{i-1})$ is absolutely continuous on $(\R^n)^{i-1}$,
$$
\PP\bigl(\dim\spann\{x_1,\dots,x_{i-1}\}< i-1\bigr)=0,
$$
so $\dim(F_{i-1})=i-1$ almost surely, and therefore
$$
d_i=\dim(F_{i-1}^\perp)=n-(i-1)=n-i+1.
$$
Since $i\ge p-r+1$, we have
$$
d_i=n-i+1\le n-(p-r+1)+1=n-p+r=:N.
$$

Condition on $\sigma(x_1,\dots,x_{i-1})$. Then $F_{i-1}$ is deterministic under this conditioning and, by Step~1,
$x_i$ is independent of $\sigma(x_1,\dots,x_{i-1})$ and has law $N(0,\Id_n)$. Therefore the orthogonal projection
$\pi_{F_{i-1}^\perp}x_i$ is a centered Gaussian vector in the subspace $F_{i-1}^\perp$ with covariance equal to the identity on that
subspace. Identifying $F_{i-1}^\perp\simeq\R^{d_i}$ by an isometry, this yields
$$
\|\pi_{F_{i-1}^\perp}x_i\|_2 \stackrel{d}{=} \|G_{d_i}\|_2,
\quad G_{d_i}\sim N(0,\Id_{d_i}).
$$
Let $G_N=(G_{d_i},G')$ where $G'\sim N(0,\Id_{N-d_i})$ is independent of $G_{d_i}$. Then $\|G_N\|_2\ge \|G_{d_i}\|_2$ almost surely,
hence for all $\tau\ge 0$,
$$
\PP\bigl(\|G_{d_i}\|_2>\tau\sqrt N\bigr)\le \PP\bigl(\|G_N\|_2>\tau\sqrt N\bigr).
$$
Applying Lemma~\ref{lem:gauss-norm-tail} (first bound) in dimension $N$ with $t:=(\tau-1)\sqrt N\ge 0$ gives
$$
\PP\bigl(\|G_N\|_2>\tau\sqrt N\bigr)
=\PP\bigl(\|G_N\|_2>\sqrt N+t\bigr)
\le \exp\bigl(-c(\tau-1)^2N\bigr).
$$
Since in the statement we assume $\tau\ge C$ and we may increase $C$ if necessary, we may assume $\tau\ge 2$, in which case
$(\tau-1)^2\ge \tau^2/4$ and hence
$$
\PP\bigl(\|G_N\|_2>\tau\sqrt N\bigr)\le \exp(-c_0\tau^2N),
\quad c_0:=c/4.
$$
Combining with \eqref{eq:block-tail-dist-upper-fixed} yields, almost surely,
\begin{equation}\label{eq:block-tail-Ai-fixed}
\PP\Bigl(\dist(H_i,F_{i-1})>\tau\sqrt N\ \big|\ \sigma(x_1,\dots,x_{i-1})\Bigr)
\le p_0:=\exp(-c_0\tau^2 N).
\end{equation}

\subsubsection*{Step 4: at most half of the tail indices are bad.}
For each $i\in\{p-r+1,\dots,p\}$ define the “bad’’ event
$$
A_i:=\{\dist(H_i,F_{i-1})>\tau\sqrt N\}.
$$
Let $t:=\lceil r/2\rceil$. Then $\mathcal E_{B,J}^c$ implies that at least $t$ indices in $\{p-r+1,\dots,p\}$ are bad, i.e.
$$
\mathcal E_{B,J}^c \subset \bigcup_{\substack{I\subset\{p-r+1,\dots,p\}\\ |I|=t}}\ \bigcap_{i\in I} A_i.
$$
Fix a subset $I=\{i_1<\cdots<i_t\}$ of size $t$.
Iterating \eqref{eq:block-tail-Ai-fixed} and using the tower property gives
$$
\PP\Bigl(\bigcap_{\ell=1}^t A_{i_\ell}\Bigr)
\le p_0^t.
$$
Taking the union bound over all $\binom{r}{t}$ choices of $I$ yields
$$
\PP(\mathcal E_{B,J}^c)\le \binom{r}{t}p_0^t.
$$
Using $\binom{r}{t}\le 2^r$ and $t\ge r/2$ gives
$$
\PP(\mathcal E_{B,J}^c)\le 2^r\exp\Bigl(-c_0\tau^2Nt\Bigr)
\le \exp\Bigl(r\log 2 - \frac{c_0}{2}\tau^2Nr\Bigr).
$$
Choosing the universal constant $C$ in the statement large enough so that $\tau\ge C$ implies
$\frac{c_0}{2}\tau^2\ge 2\log 2$, we obtain
$$
\PP(\mathcal E_{B,J}^c)\le \exp(-c\tau^2Nr)
$$
for a universal $c>0$, proving \eqref{eq:block-tail-prob-fixedJ}. Finally, \eqref{eq:block-tail-prob-allJ} follows by a union bound over all
$J\in\mathcal J_{p,r}$, using $|\mathcal J_{p,r}|=\binom{p}{r}$.
\end{proof}

%%%%%%%%%%%%%%%%%%%%%%%%%%%%%%%%%%%%%%%%%%%
\subsection{Suppression implies exponential small Gaussian measure}

We package the preceding lemmas into a deterministic implication.

\begin{lemma}[Suppression $\Rightarrow$ exponential small Gaussian measure] \label{lem:suppression-measure}
There exist universal constants $c_{\mathrm{suppr}},c_{\mathrm{decay}}>0$ such that the following holds. Let $n\ge 1$ and let $P\subset\R^n$ be a full-dimensional cross-polytope generated by $p$ $K$--vectors and $q$ additional vectors,
$$
P = \conv\{\pm y_1,\dots,\pm y_p,\pm z_1,\dots,\pm z_q\},
\quad p + q = n,
$$
where $y_1,\dots,y_p,z_1,\dots,z_q$ are linearly independent, and assume moreover that $p\ge n/2$ (equivalently, $q\le n/2$). Fix integers $8\le r\le p$ and assume that for every $J\subset[p]$ with $|J| = r$, in the block order in which indices in $[p]\setminus J$ come first and indices in $J$ come last, at least $r/2$ of the last $r$ Gram--Schmidt distances satisfy
$$
\dist\bigl(y_{\sigma_J(i)},\spann\{y_{\sigma_J(j)}: j<i\}\bigr)\le C_\star \sqrt r.
$$
Then for every $\rho\ge 1$ such that
\begin{equation} \label{eq:scale}
4\rho C_\star\sqrt r \le c_{\mathrm{suppr}} n,
\end{equation}
one has
$$
\gamma_n(4\rho P)\le 2\exp(-c_{\mathrm{decay}}r).
$$
\end{lemma}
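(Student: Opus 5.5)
Apply Lemma~\ref{lem:random-suppression} with $t=4\rho$ to get
$$
\gamma_n(4\rho P)\le \frac{2}{\binom{p}{r}}\sum_{|J|=r}\gamma_n(16\rho P_J),
$$
so it suffices to show $\gamma_n(16\rho P_J)\le \exp(-c_{\mathrm{decay}} r)$ for \emph{every} $J$ with $|J|=r$. The averaging then gives the factor $2$ in the conclusion.

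Fix $J$ and order the generators of $P_J$ as follows:
\begin{itemize}
\item first $z_1,\dots,z_q$;
\item then $y_i$ for $i\in[p]\setminus J$ in increasing order;
\item then the suppressed vectors $(r/p)y_i$ for $i\in J$ in increasing order.
\end{itemize}
Call the resulting vectors $x_1,\dots,x_n$; they are linearly independent. The last $r$ vectors are the suppressed ones. For such an index $i$, the span of the earlier vectors contains $\spann\{y_{\sigma_J(j)}:j<\cdot\}$, and adding the $z$'s in front only enlarges it. Hence the Gram--Schmidt distance of $x_i$ is at most $(r/p)$ times the distance appearing in the hypothesis. By hypothesis, at least $r/2$ of these last $r$ distances are at most $(r/p)C_\star\sqrt r\le 2C_\star r^{3/2}/n$, using $p\ge n/2$. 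The remaining tail distances are at most $(r/p)\|y_i\|_2$, for which we have no bound.

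Lemma~\ref{lem:det-shrink-clean} requires all of the last $d$ diagonals to be small, not just some of them. So rather than invoking it directly, I will redo its proof with a modified ordering. Let $J'\subset J$ be the set of at least $r/2$ good indices. Put the vectors of $J\setminus J'$ before those of $J'$, keeping the relative order within $J'$. Moving generators earlier only enlarges the span preceding each good index, so each good distance does not increase. We now have the last $d:=|J'|\ge r/2$ Gram--Schmidt diagonals all bounded by $h:=2C_\star r^{3/2}/n$. Dilation by $16\rho$ scales the diagonals, giving $h'=16\rho h=32\rho C_\star r^{3/2}/n$. Lemma~\ref{lem:det-shrink-clean} then yields
$$
\gamma_n(16\rho P_J)\le \Bigl(\frac{C h'}{d}\Bigr)^d\le \Bigl(\frac{64 C\rho C_\star\sqrt r}{n}\Bigr)^{r/2}.
$$
By \eqref{eq:scale}, $4\rho C_\star\sqrt r\le c_{\mathrm{suppr}}n$, so the base is at most $16Cc_{\mathrm{suppr}}\le e^{-1}$ once $c_{\mathrm{suppr}}$ is small enough. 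This gives $\exp(-r/2)$, so we may take $c_{\mathrm{decay}}=1/2$. The estimate holds uniformly in $J$, and averaging finishes the argument. If $d$ is not an integer, take $d=\lceil r/2\rceil$; the condition $r\ge 8$ keeps the bookkeeping harmless.

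The main obstacle is the reordering step. The hypothesis controls distances in one specific block order, while Lemma~\ref{lem:det-shrink-clean} needs a contiguous tail of small diagonals. The fix relies on monotonicity: moving vectors earlier only enlarges the spans preceding a fixed vector. One must check that moving the bad $J$-vectors and the $z$'s forward changes neither the set of generators nor the polytope; it does not, since $P_J$ does not depend on the ordering. It only alters which diagonals are used in the volume bound. An alternative is to apply the projection argument of Lemma~\ref{lem:det-shrink-clean} directly onto the orthogonal complement of the span of all non-good generators, where the good vectors form an upper-triangular system with small diagonals.
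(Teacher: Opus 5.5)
Your proof is correct and follows the paper's argument step for step: random suppression with $t=4\rho$, then ordering the $z$'s first and the $y$'s in the block order $\sigma_J$, gaining the factor $r/p\le 2r/n$ on the suppressed tail diagonals, moving the good suppressed vectors to the end while keeping their relative order (which only enlarges their predecessor spans), and applying Lemma~\ref{lem:det-shrink-clean}. The only difference is cosmetic: you keep all $d\ge r/2$ good vectors and obtain $c_{\mathrm{decay}}=1/2$, whereas the paper uses $d=\lfloor r/4\rfloor$ and obtains $c_{\mathrm{decay}}=\tfrac{\log 2}{8}$.
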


\begin{proof}
Fix $J\subset[p]$, $|J| = r$, and define the suppressed polytope $P_J$ as in Lemma~\ref{lem:random-suppression}. By Lemma~\ref{lem:random-suppression} with $t = 4\rho$,
\begin{equation} \label{eq:suppression-step}
\gamma_n(4\rho P)\le \frac{2}{\binom{p}{r}}\sum_{|J| = r}\gamma_n(16\rho P_J).
\end{equation}
Thus it suffices to show that uniformly over $J$,
\begin{equation} \label{eq:need-suppressed-bound}
\gamma_n(16\rho P_J)\le \exp(-c_{\mathrm{decay}}r).
\end{equation}

Fix such a $J$. Consider the following order of the $n=p+q$ generators of $P_J$: list first the $q$ vectors $z_1,\dots,z_q$ (in any order), and then list the $p$ vectors $y_1,\dots,y_p$ in the block order $\sigma_J$ (so the last $r$ $y$--vectors correspond to $J$). For any $i\in[p]$, the predecessor span of $y_{\sigma_J(i)}$ in this full list contains $\spann\{y_{\sigma_J(j)}: j<i\}$, hence
$$
\dist\bigl(y_{\sigma_J(i)},\spann(\text{ preceding full list})\bigr)
\le
\dist\bigl(y_{\sigma_J(i)},\spann\{y_{\sigma_J(j)}: j<i\}\bigr).
$$
Therefore the assumed bounds on the $y$--only Gram--Schmidt distances imply the same bounds for the corresponding Gram--Schmidt distances in this full generating list (possibly with a smaller left-hand side).

By assumption, among the last $r$ $K$--generators in the block order, at least $r/2$ satisfy the Gram--Schmidt distance bound $\le C_\star\sqrt r$. In $P_J$, the generators indexed by $J$ are rescaled by a factor $r/p$, so the corresponding Gram--Schmidt distances are rescaled by the same factor. Hence in the same block order, at least $r/2$ of the last $r$ generators of $P_J$ have Gram--Schmidt distances bounded by
$$
\frac{r}{p}C_\star\sqrt r.
$$
Choose $d := \lfloor r/4\rfloor$. Since $r/2\ge d$, we may re-order the $n$ generators of $P_J$ (which does not change $P_J$) so that the last $d$ generators in the full list are chosen among these ``good'' suppressed $K$--vectors (i.e. among $\{(r/p)y_i: i\in J\}$ with the above distance bound), while keeping all other generators (in particular, all $z_1,\dots,z_q$) before them. Moving a vector later can only increase the span of its predecessors, hence can only decrease its Gram--Schmidt distance; thus the last $d$ Gram--Schmidt distances in this new order are still bounded by $\frac{r}{p}C_\star\sqrt r$.

Scaling by $16\rho$ multiplies all Gram--Schmidt distances by $16\rho$, so Lemma~\ref{lem:det-shrink-clean} gives
\begin{equation} \label{eq:suppressed-measure-bound}
\gamma_n(16\rho P_J)\le
\Bigl(\frac{C\cdot 16\rho\cdot \frac{r}{p}C_\star\sqrt r}{d}\Bigr)^d,
\end{equation}
with the universal constant $C$ from Lemma~\ref{lem:det-shrink-clean}.

We estimate the ratio. Since $d = \lfloor r/4\rfloor$, for $r\ge 8$ one has $d\ge r/8$. Also $p\ge n/2$ by assumption. Therefore, for $r\ge 8$,
$$
\frac{16\rho\cdot \frac{r}{p}C_\star\sqrt r}{d}
\le
16\rho\cdot \frac{r}{p}C_\star\sqrt r \cdot \frac{8}{r}
 = 
128 \rho \frac{C_\star\sqrt r}{p}
\le
256 \rho \frac{C_\star\sqrt r}{n}.
$$
Choose $c_{\mathrm{suppr}}>0$ so that \eqref{eq:scale} implies
$$
C\cdot 256 \rho \frac{C_\star\sqrt r}{n}\le \frac12.
$$
Then \eqref{eq:suppressed-measure-bound} gives
$$
\gamma_n(16\rho P_J)\le 2^{-d}\le \exp\Bigl(-\frac{\log 2}{8} r\Bigr).
$$
Thus \eqref{eq:need-suppressed-bound} holds with $c_{\mathrm{decay}} := \frac{\log 2}{8}$ (adjusting for small $r$ by constants if needed). Substituting into \eqref{eq:suppression-step} yields $\gamma_n(4\rho P)\le 2e^{-c_{\mathrm{decay}}r}$.
\end{proof}

%%%%%%%%%%%%%%%%%%%%%%%%%%%%%%%%%%%%%%%%%%%
\subsection{A global suppression event for the $K$-generators}

We define an event ensuring that, uniformly over $A\in\mathcal A_\varepsilon$, every cross-polytope generated by at most $\tilde s$ $U$--vectors satisfies the hypothesis of Lemma~\ref{lem:suppression-measure}. The proof is a counting-and-union-bound argument, but (in contrast to \cite{Tikhomirov2019}) it involves only \emph{block} permutations (determined by subsets $J$) and uses Lemma~\ref{lem:block-tail-distances}. The full proof is given in Appendix~\ref{app:suppr-proof}.

\begin{definition}[Suppression event (global)] \label{def:EA1}
Fix integers $1\le \tilde s\le n/2$ and $1\le r\le n$, and let $C_\star>0$ be a sufficiently large absolute constant. For $A\in\mathcal A_\varepsilon$, define $\mathcal E_A^{(K)}(r,\tilde s)$ to be the event that: for every $k\in\{0,1,\dots,\tilde s\}$ and every subset $I_1\subset[n]$ with $|I_1| = n-k$, if the vectors $\{v_i(A;\Gamma_{S(A)}): i\in I_1\}$ are linearly independent, then for every $J\subset I_1$ with $|J| = r$, writing $\sigma_{I_1,J}$ for the re-ordering of $I_1$ that lists $I_1\setminus J$ in increasing order followed by $J$ in increasing order, one has
$$
\Big|\Big\{\ell\in\{n-k-r + 1,\dots,n-k\}: 
\dist\bigl(v_{\sigma_{I_1,J}(\ell)},\spann\{v_{\sigma_{I_1,J}(t)}: t<\ell\}\bigr)\le C_\star\sqrt r
\Big\}\Big|
 \ge \frac{r}{2}.
$$
Define the global event
$$
\mathcal E^{(K)}(r,\tilde s) := \bigcap_{A\in\mathcal A_\varepsilon}\mathcal E_A^{(K)}(r,\tilde s).
$$
\end{definition}

\begin{lemma} \label{lem:EK-prob}
There exists a universal constant $C>0$ such that the following holds. Assume $m = n^3$, $\varepsilon^{-1} = \rho n^2$, $n$ is sufficiently large, $1\le s\le n$, and parameters $s,\tilde s,r$ satisfy
\begin{equation} \label{eq:EK-conds}
4\tilde s\le r\le \frac{n-\tilde s}{2},\quad \tilde s\ge \log^2 n,
\quad
\frac{\tilde s^2}{n s} \ge C \log(n\rho).
\end{equation}
Then
$$
\PP\bigl(\mathcal E^{(K)}(r,\tilde s)\bigr) \ge 1-\frac{1}{n}.
$$
\end{lemma}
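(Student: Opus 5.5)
The plan is a union bound over all $A\in\mathcal A_\varepsilon$, all $k\le\tilde s$, all $I_1\subset[n]$ with $|I_1|=n-k$, and all $J\subset I_1$ with $|J|=r$. For each such configuration we invoke Lemma~\ref{lem:block-tail-distances}. The obstacle is entropy. By \eqref{eq:cardAeps-log} the net has $\log|\mathcal A_\varepsilon|\lesssim n^2\log(n\rho)$. The block-tail bound for a fixed $(A,I_1,J)$ gives only $\exp(-c\tau^2 Nr)$ with $N=n-p+r\asymp r$. This is about $\exp(-cr^2)$ with $\tau$ constant, far short of $e^{-n^2\log(n\rho)}$. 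So we cannot union bound over $A$ directly. We must instead exploit that the $v_i(A;\omega)=\Gamma A^{(K)}e_i$ are Gaussian images of $s$-sparse coefficient vectors with entries of size at least $1/s$ on an $\varepsilon$-grid. The event then depends on $A$ only through the $n-k$ sparse $K$-columns indexed by $I_1$.

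\textbf{Step 1 (reparametrize the entropy).} The event in Definition~\ref{def:EA1} for a given $I_1$ depends only on the family $\{A^{(K)}e_i : i\in I_1\}$. It is also scale-invariant per vector in the relevant sense. Distances scale, but we only need the hypothesis $\|Be_i\|_2\le1$, and $\|a^{(K)}\|_2\le\|a\|_1\le1$ holds automatically. So the event is really indexed by the ordered collection of $p=n-k$ vectors in $\mathcal K_\varepsilon:=\{b\in(\varepsilon\Z)^m:|\supp b|\le s,\ \|b\|_1\le1\}$. One such vector has entropy $\log|\mathcal K_\varepsilon|\lesssim s\log(n\rho)$. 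The point is that we never need the full collection. By the structure of Lemma~\ref{lem:block-tail-distances}, the tail event for the last $r$ indices is a statement about the distances of the $J$-vectors to the span of the preceding ones. I plan to condition on the span $E$ of the $I_1\setminus J$ vectors, or more precisely on $\Gamma$ restricted to their supports. The remaining randomness then lies in fresh Gaussian columns outside those supports, plus the projection $\pi_{E^\perp}$, which has dimension $N=k+r$.

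\textbf{Step 2 (the right probability estimate).} I take $\tau\asymp C_\star\sqrt{r/N}$, which is of constant order since $N=k+r\le \tfrac54 r$ by $r\ge4\tilde s\ge 4k$. Re-running Steps 3--4 of the proof of Lemma~\ref{lem:block-tail-distances} conditionally on the first $p-r$ vectors gives failure probability $\exp(-cC_\star^2 r^2)$ uniformly over that conditioning. Here I use that $\dist(H_i,E_{i-1})\le\|\pi_{F_{i-1}^\perp}x_i\|$ with $\dim F_{i-1}^\perp\le N$. The union bound must then run only over the $J$-block data. That means $\binom{n}{r}$ choices of $J$ and at most $|\mathcal K_\varepsilon|^r$ choices of the $r$ tail vectors, with total entropy $\lesssim r\log n+rs\log(n\rho)$. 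We must also handle the uniformity over the first block $I_1\setminus J$, whose span $E$ ranges over infinitely many realizations. I would handle this by an $\epsilon$-net over $N$-codimensional projections generated by Gaussian images. Alternatively, I would pull the first block's entropy out via the Gaussian concentration of $\|\pi_{E^\perp}\Gamma b\|$ as a $1$-Lipschitz function of $\Gamma$, for each fixed tail vector $b$.

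\textbf{Step 3 (bookkeeping).} We need $cC_\star^2r^2\ge C(rs\log(n\rho)+r\log n+\log n)$, i.e.\ $r\gtrsim s\log(n\rho)$. This follows from $r\ge4\tilde s$ and $\tilde s^2/(ns)\ge C\log(n\rho)$, since these give $s\log(n\rho)\le\tilde s^2/n\le\tilde s\le r$. The condition $\tilde s\ge\log^2n$ absorbs the $\log n$ terms and yields the final $1/n$. We then sum over $k\le\tilde s$ and $I_1$ (at most $n^{\tilde s}$ choices, entropy $\tilde s\log n\lesssim r\log n$) to conclude.

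The main obstacle is Step 2's uniformity over the first block. The honest union bound over $A$ is hopeless, so everything hinges on decoupling the tail vectors from the $n^2\log(n\rho)$ entropy of the rest of $A$ via conditioning. That conditioning is delicate because the supports of head and tail $K$-columns may overlap in $S(A)$.
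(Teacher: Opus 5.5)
Your proposal has a genuine gap at Step 2, and that step is also unnecessary. You never establish that the tail-block bound holds uniformly over the head block $I_1\setminus J$, and neither of your suggested devices supplies it.

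\begin{itemize}
\item A net over codimension-$N$ subspaces of $\R^n$ has log-cardinality of order $N(n-N)\gtrsim rn$ (times a logarithm). That is far more than the $cr^2$ you can afford when $r\ll n$, which \eqref{eq:EK-conds} allows.
\item Concentration of $\Gamma\mapsto\|\pi_{E^\perp}\Gamma b\|_2$ is available only for a deterministic $E$. Here $E$ is spanned by Gaussian images that may share columns of $\Gamma$ with $b$, as you note yourself. In any case it bounds one head at a time, not all heads simultaneously.
\end{itemize}

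The proof of Lemma~\ref{lem:block-tail-distances} copes with overlapping supports only through the QR factorization of the \emph{entire} matrix $B$. It therefore needs head and tail fixed together, not a conditioning on the head span.

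The missing observation is that the head entropy is affordable, and the third condition in \eqref{eq:EK-conds} exists for exactly this purpose. In Step 1 you correctly note that, for fixed $(A,I_1)$, the event depends only on the $p=n-k$ columns $A^{(K)}e_i$, $i\in I_1$, each lying in a set $\mathcal K_\varepsilon$ with $\log|\mathcal K_\varepsilon|\lesssim s\log(n\rho)$. Multiply this by $p\le n$. The family $\mathcal T_p$ of possible $m\times p$ matrices $B$ then satisfies
\[
\log|\mathcal T_p|\le C_0 sn\log(n\rho)\le C_0\tilde s^2/C\le C_0r^2/(16C),
\]
using $sn\log(n\rho)\le\tilde s^2/C$ and $r\ge 4\tilde s$.

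Now fix $B$ of rank $p$; if the rank is smaller, the condition is vacuous. Apply Lemma~\ref{lem:block-tail-distances} with a constant $\tau_0$ and $N=n-p+r\in[r,\tfrac54 r]$, so that $\tau_0\sqrt N\le C_\star\sqrt r$. A union over the $\binom{p}{r}\le n^r$ sets $J$ gives failure probability at most $\exp(r\log n-cr^2)$. A further union over $B\in\mathcal T_p$ and the $\tilde s+1$ values of $p$ gives total failure probability at most
\[
(\tilde s+1)\exp\bigl(C_0sn\log(n\rho)+r\log n-cr^2\bigr)\le 1/n,
\]
once $C$ is large and $r\ge 4\log^2 n$.

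No conditioning on the head is needed. Reading the third hypothesis as merely $r\gtrsim s\log(n\rho)$ throws away the stronger consequence $sn\log(n\rho)\lesssim r^2$, which is exactly what lets a direct union bound over the whole $K$-submatrix go through.
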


\begin{proof}
See Appendix~\ref{app:suppr-proof}.
\end{proof}

%%%%%%%%%%%%%%%%%%%%%%%%%%%%%%%%%%%%%%%%%%%
\subsection{Small-$U$ regime: a union measure bound}

Recall the universal constants $c_{\mathrm{suppr}},c_{\mathrm{decay}}>0$ from Lemma~\ref{lem:suppression-measure} and the absolute constant $C_\star>0$ from Definition~\ref{def:EA1}.

\begin{proposition}[Small-$U$ regime: union measure bound] \label{prop:K-bound}
Assume $1\le \tilde s\le n/2$ and $8\le r\le n-\tilde s$, assume that $\mathcal E^{(K)}(r,\tilde s)$ holds, and assume that the scaling condition \eqref{eq:scale} is satisfied. Then for every $A\in\mathcal A_\varepsilon$, every exposed realization $\omega$, and every $k\in\{0,1,\dots,\tilde s\}$,
$$
p_k(A;\omega) = \gamma_n\bigl(4\rho \mathcal P^{(k)}(A;\omega)\bigr)
\le
2\binom{n}{k}^2 \exp(-c_{\mathrm{decay}} r),
$$
where $c_{\mathrm{decay}}$ is from Lemma~\ref{lem:suppression-measure}.

Consequently,
$$
\sum_{k = 0}^{\tilde s} p_k(A;\omega)
\le
2(\tilde s + 1)\exp(-c_{\mathrm{decay}}r)\binom{n}{\tilde s}^2
\le
2(\tilde s + 1)\exp(-c_{\mathrm{decay}}r)\Big(\frac{en}{\tilde s}\Big)^{2\tilde s}.
$$
In particular, if
\begin{equation} \label{eq:r-beats-comb}
c_{\mathrm{decay}}r \ge 4\tilde s\log\Big(\frac{en}{\tilde s}\Big) + \log 16,
\end{equation}
then $\sum_{k = 0}^{\tilde s} p_k(A;\omega)\le \frac14$.
\end{proposition}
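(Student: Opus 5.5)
The plan is to bound each $p_k$ by a union bound over the index sets $I$, and to control each single polytope $\mathcal P_I$ through Lemma~\ref{lem:suppression-measure}. The hypotheses of that lemma are supplied by the global event $\mathcal E^{(K)}(r,\tilde s)$.

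Fix $A\in\mathcal A_\varepsilon$, an exposed realization $\omega$, and $k\le\tilde s$. By definition, $\mathcal P^{(k)}(A;\omega)$ is a union of $\mathcal P_I$ over sets $I\subset[2n]$ with $|I|=n$ and $|I\cap(n+[n])|=k$. Each such $I$ corresponds to a pair $(I_1,I_2)$ with $I_1\subset[n]$, $|I_1|=n-k$ (the $K$-generators) and $I_2\subset[n]$, $|I_2|=k$ (the $U$-generators). So there are exactly $\binom{n}{n-k}\binom{n}{k}=\binom nk^2$ such sets, and subadditivity gives
$$
p_k(A;\omega)\le\sum_{I}\gamma_n\bigl(4\rho\mathcal P_I(A;\omega)\bigr).
$$
It therefore suffices to show $\gamma_n(4\rho\mathcal P_I)\le 2\exp(-c_{\mathrm{decay}}r)$ for each $I$.

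For a fixed $I$, the degenerate case is immediate. If the $n$ generators $\{v_i:i\in I_1\}\cup\{u_j:j\in I_2\}$ are linearly dependent, then $\mathcal P_I$ lies in a hyperplane and $\gamma_n(4\rho\mathcal P_I)=0$.

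Otherwise all generators are independent. In particular $\{v_i: i\in I_1\}$ is independent, so $\mathcal E^{(K)}_A(r,\tilde s)$ applies to $I_1$. It gives that for every $J\subset I_1$ with $|J|=r$, at least $r/2$ of the last $r$ Gram--Schmidt distances in the block order $\sigma_{I_1,J}$ are at most $C_\star\sqrt r$. Here $J\subset I_1$ requires $r\le n-k$, which follows from $r\le n-\tilde s$.

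This is exactly the hypothesis of Lemma~\ref{lem:suppression-measure}, after relabeling:
\begin{itemize}
\item $y_1,\dots,y_p$ are the $v_i$, $i\in I_1$, listed in increasing order, with $p=n-k$;
\item $z_1,\dots,z_q$ are the $u_j$, $j\in I_2$, with $q=k$;
\item the block order indexed by $J\subset[p]$ corresponds to $\sigma_{I_1,J}$ under the increasing identification $[p]\cong I_1$.
\end{itemize}
The remaining conditions hold: $p=n-k\ge n-\tilde s\ge n/2$ since $\tilde s\le n/2$; $8\le r\le p$; and the scaling condition \eqref{eq:scale} is assumed. The lemma then yields $\gamma_n(4\rho\mathcal P_I)\le 2e^{-c_{\mathrm{decay}}r}$. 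Summing over the $\binom nk^2$ choices gives the first claim.

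For the consequences, sum over $0\le k\le\tilde s$. Since $\tilde s\le n/2$, the map $k\mapsto\binom nk$ is nondecreasing on this range, so
$$
\sum_{k\le\tilde s}\binom nk^2\le(\tilde s+1)\binom n{\tilde s}^2\le(\tilde s+1)(en/\tilde s)^{2\tilde s}
$$
by Lemma~\ref{lem:binombound}. Under \eqref{eq:r-beats-comb},
$$
2(\tilde s+1)\,e^{-c_{\mathrm{decay}}r}\,(en/\tilde s)^{2\tilde s}\le 2(\tilde s+1)\,(en/\tilde s)^{-2\tilde s}/16.
$$
Since $en/\tilde s\ge 2e$, we have $(\tilde s+1)(2e)^{-2\tilde s}\le 1/2$, and the whole expression is at most $1/16\le 1/4$.

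The only delicate step is the bookkeeping in applying Lemma~\ref{lem:suppression-measure}. One must check that its "for every $J\subset[p]$" hypothesis, stated in terms of $y$-only Gram--Schmidt distances, coincides with the event in Definition~\ref{def:EA1} under the increasing identification of $I_1$ with $[p]$. One must also check that $p\ge n/2$ and $r\le p$ follow from $k\le\tilde s\le n/2$ and $r\le n-\tilde s$.
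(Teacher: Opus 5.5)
Your proposal is correct and follows essentially the same route as the paper's proof. Both arguments take a union bound over the $\binom{n}{k}^2$ index sets $I=I_1\cup(n+I_2)$, discard degenerate polytopes, transfer the hypothesis of Lemma~\ref{lem:suppression-measure} from $\mathcal E^{(K)}(r,\tilde s)$ via the increasing identification $[p]\cong I_1$ (with $p=n-k\ge n/2$ and $8\le r\le p$), and sum using the monotonicity of $\binom{n}{k}$ for $k\le\tilde s\le n/2$; your explicit check that $(\tilde s+1)(en/\tilde s)^{-2\tilde s}\le 1/2$ makes precise a step the paper leaves implicit.
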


\begin{proof}
Fix $A\in\mathcal A_\varepsilon$, an exposed realization $\omega$, and $k\in\{0,1,\dots,\tilde s\}$.
Set
$$
\mathcal I_k
:=
\Bigl\{I\subset[2n]:  |I|=n,\ \bigl|I\cap(n+[n])\bigr|=k\Bigr\}.
$$
Then by definition,
$$
\mathcal P^{(k)}(A;\omega)=\bigcup_{I\in\mathcal I_k}\mathcal P_I(A;\omega),
\quad
p_k(A;\omega)=\gamma_n\Bigl(4\rho \mathcal P^{(k)}(A;\omega)\Bigr).
$$
By the union bound for measures,
\begin{equation}\label{eq:pk-unionbound-K}
p_k(A;\omega)
=\gamma_n\Bigl(\bigcup_{I\in\mathcal I_k}4\rho \mathcal P_I(A;\omega)\Bigr)
\le \sum_{I\in\mathcal I_k}\gamma_n\bigl(4\rho \mathcal P_I(A;\omega)\bigr).
\end{equation}

We first count $|\mathcal I_k|$. Any $I\in\mathcal I_k$ can be written uniquely as
$$
I = I_1\cup (n+I_2),
\quad
I_1\subset[n],\ |I_1|=n-k,\quad
I_2\subset[n],\ |I_2|=k,
$$
hence $|\mathcal I_k|=\binom{n}{n-k}\binom{n}{k}=\binom{n}{k}^2$.

Now fix $I\in\mathcal I_k$ and write $I=I_1\cup(n+I_2)$ as above. Recall that
$$
\mathcal P_I(A;\omega)=\conv\Bigl(\{\pm v_i(A;\omega): i\in I_1\}\ \cup\ \{\pm u_j(A;\omega): j\in I_2\}\Bigr)
\subset \R^n.
$$
If $\mathcal P_I(A;\omega)$ is not full-dimensional, then it is contained in a proper affine subspace of $\R^n$,
so $\gamma_n(4\rho \mathcal P_I(A;\omega))=0$.

Assume henceforth that $\mathcal P_I(A;\omega)$ is full-dimensional.
Then its linear span equals $\R^n$, and since it is generated by exactly $n$ vectors
$\{w_\ell(A;\omega): \ell\in I\}$ (see \eqref{eq:PIdef}), the family $\{w_\ell: \ell\in I\}$ spans $\R^n$.
Because $|I|=n$, it follows that $\{w_\ell:\ell\in I\}$ is linearly independent.
In particular, the subfamily $\{v_i:i\in I_1\}\subset\{w_\ell:\ell\in I\}$ is linearly independent.

Set $p:=|I_1|=n-k$ and $q:=|I_2|=k$, so $p+q=n$. Since $k\le\tilde s\le n/2$, we have $p=n-k\ge n/2$.
Moreover, the assumptions of the proposition give $8\le r\le n-\tilde s\le n-k=p$, so $r\in[8,p]$.

Let $I_1=\{i_1<\cdots<i_p\}$ and $I_2=\{j_1<\cdots<j_q\}$ be increasing enumerations and define
$$
y_a := v_{i_a}(A;\omega)\quad(a=1,\dots,p),
\quad
z_b := u_{j_b}(A;\omega)\quad(b=1,\dots,q).
$$
Then $\mathcal P_I(A;\omega)=\conv\{\pm y_1,\dots,\pm y_p,\pm z_1,\dots,\pm z_q\}$.

We verify the Gram--Schmidt hypothesis of Lemma~\ref{lem:suppression-measure} for the $K$-vectors $y_1,\dots,y_p$.
Fix any subset $J\subset[p]$ with $|J|=r$ and let $J':=\{i_a:\ a\in J\}\subset I_1$.
Let $\sigma_J$ be the permutation of $[p]$ listing $[p]\setminus J$ first in increasing order and then $J$ in increasing order.
Under the order-preserving identification $a\leftrightarrow i_a$, the permuted list $(y_{\sigma_J(1)},\dots,y_{\sigma_J(p)})$
coincides with the list $(v_{\sigma_{I_1,J'}(1)},\dots,v_{\sigma_{I_1,J'}(p)})$, where $\sigma_{I_1,J'}$ is as in
Definition~\ref{def:EA1}. Since $\mathcal E^{(K)}(r,\tilde s)$ holds and $\{v_i:i\in I_1\}$ is linearly independent,
Definition~\ref{def:EA1} implies that in this block order at least $r/2$ of the last $r$ Gram--Schmidt distances of the $y$-family are
$\le C_\star\sqrt r$. Thus the hypothesis of Lemma~\ref{lem:suppression-measure} is satisfied for the cross-polytope
$\mathcal P_I(A;\omega)$ with $K$-vectors $(y_a)_{a=1}^p$ and additional vectors $(z_b)_{b=1}^q$.

Finally, the scaling condition \eqref{eq:scale} is assumed in the statement of the proposition, so Lemma~\ref{lem:suppression-measure}
yields
$$
\gamma_n\bigl(4\rho \mathcal P_I(A;\omega)\bigr)\le 2\exp(-c_{\mathrm{decay}}r).
$$
This bound also holds (trivially) in the non-full-dimensional case, since then the left-hand side is $0$.

Substituting into \eqref{eq:pk-unionbound-K} and using $|\mathcal I_k|=\binom{n}{k}^2$ gives
$$
p_k(A;\omega)\le 2\binom{n}{k}^2\exp(-c_{\mathrm{decay}}r),
$$
which is the first displayed inequality.

Summing over $k\le \tilde s$ and using $\binom{n}{k}\le \binom{n}{\tilde s}$ for $k\le\tilde s$ yields
$$
\sum_{k=0}^{\tilde s}p_k(A;\omega)
\le 2e^{-c_{\mathrm{decay}}r}\sum_{k=0}^{\tilde s}\binom{n}{k}^2
\le 2(\tilde s+1)e^{-c_{\mathrm{decay}}r}\binom{n}{\tilde s}^2
\le 2(\tilde s+1)e^{-c_{\mathrm{decay}}r}\Bigl(\frac{en}{\tilde s}\Bigr)^{2\tilde s}.
$$
Finally, if \eqref{eq:r-beats-comb} holds, then
$$
2(\tilde s+1)e^{-c_{\mathrm{decay}}r}\Bigl(\frac{en}{\tilde s}\Bigr)^{2\tilde s}
\le 2(\tilde s+1)\cdot \frac{1}{16}\Bigl(\frac{en}{\tilde s}\Bigr)^{-4\tilde s}\Bigl(\frac{en}{\tilde s}\Bigr)^{2\tilde s}
\le \frac14,
$$
completing the proof.
\end{proof}

\begin{remark}[Polynomial message of the small-$U$ regime] \label{rem:smallU-poly}
Ignoring logarithms, the entropy term is $\binom{n}{k}^2\approx \exp(\Theta(k\log n))$ while Lemma~\ref{lem:suppression-measure} gives $\gamma_n(4\rho\mathcal P_I)\lesssim e^{-c r}$. Thus we need $r\gtrsim \tilde s$ (up to logs) and the scaling condition \eqref{eq:scale} is $\rho\sqrt r\lesssim n$, i.e.
$$
\rho \lesssim \frac{n}{\sqrt{\tilde s}}
\quad\text{ (up to logarithmic factors).}
$$
\end{remark}

%%%%%%%%%%%%%%%%%%%%%%%%%%%%%%%%%%%%%%%%%%%
%%%%%%%%%%%%%%%%%%%%%%%%%%%%%%%%%%%%%%%%%%%
\section{Many $U$-generators (large-$U$ regime)} \label{sec:large}

%%%%%%%%%%%%%%%%%%%%%%%%%%%%%%%%%%%%%%%%%%%
\subsection{Thickening-stable Gaussian small-measure bound} \label{sec:thickening}

We collect a general Gaussian measure estimate for a thickened absolute convex hull.
To avoid a clash with the parameter $r$, we denote the thickening radius by $\eta\ge 0$.

\begin{lemma}[Gaussian measure bound stable under Euclidean thickening] \label{lem:thickening}
Let $d,\ell\in\N$ and let
$$
P = \conv\{\pm x_1,\dots,\pm x_\ell\}\subset\R^d,
\quad
R := \max_{i\le \ell}\|x_i\|_2.
$$
Then for any $\rho\ge 1$ and $\eta\ge 0$,
\begin{equation} \label{eq:thickening-bound}
\gamma_d\bigl(\rho(P + \eta {B_2^{d}})\bigr)
 \le\
\binom{\ell + d}{d}
\Big(\frac{C\rho (R + \eta\sqrt d)}{d}\Big)^d.
\end{equation}
\end{lemma}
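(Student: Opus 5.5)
The plan is to bound the Gaussian measure by Lebesgue volume, using the pointwise density bound $(2\pi)^{-d/2}$, and to control the volume of the thickened polytope by absorbing the Euclidean ball into a cross-polytope. That turns $P+\eta B_2^d$ into (a dilate of) a single absolute convex hull of $\ell+d$ vectors. The binomial factor $\binom{\ell+d}{d}$ then comes from the same rank-aware decomposition (Corollary~\ref{cor:l1-decomp}) already used in the bridge lemma.

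\emph{Step 1 (absorbing the ball).} Since $\|y\|_1\le\sqrt d\,\|y\|_2$ on $\R^d$, we have $\eta B_2^d\subset \eta\sqrt d\,B_1^d=\conv\{\pm \eta\sqrt d\, e_j: j\in[d]\}$. For origin-symmetric convex sets $K,L$ one has $K+L\subset 2\conv(K\cup L)$. Hence
$$
P+\eta B_2^d\subset 2\,Q,\qquad Q:=\conv\{\pm x_1,\dots,\pm x_\ell,\pm \eta\sqrt d\, e_1,\dots,\pm\eta\sqrt d\, e_d\}.
$$
Every generator of $Q$ has Euclidean norm at most $M:=\max(R,\eta\sqrt d)\le R+\eta\sqrt d$.

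\emph{Step 2 (decomposition into simplicial cross-polytopes).} Write $Q=W(B_1^{\ell+d})$, where $W\in\R^{d\times(\ell+d)}$ has the above generators as columns. Apply Corollary~\ref{cor:l1-decomp} (Carath\'eodory in $\ell_1$ form). This gives
$$
Q\subset\bigcup_{J}\conv\{\pm We_j:j\in J\},
$$
with $|J|\le d$. After enlarging each $J$ to size exactly $d$, which only enlarges the hull, we get at most $\binom{\ell+d}{d}$ cross-polytopes $Q_J=W_J(B_1^d)$. Each has
$$
\mathrm{vol}_d(Q_J)=|\det W_J|\,\frac{2^d}{d!}\le \frac{(2M)^d}{d!}
$$
by Hadamard's inequality.

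\emph{Step 3 (measure bound).} Combining the previous steps,
$$
\gamma_d\bigl(\rho(P+\eta B_2^d)\bigr)\le\sum_J\gamma_d(2\rho Q_J)\le \binom{\ell+d}{d}(2\pi)^{-d/2}(2\rho)^d\frac{(2M)^d}{d!}.
$$
Using $d!\ge(d/e)^d$, this is at most $\binom{\ell+d}{d}\bigl(C\rho(R+\eta\sqrt d)/d\bigr)^d$, which is \eqref{eq:thickening-bound}. If $W_J$ is singular, $Q_J$ has measure zero and contributes nothing.

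There is no serious obstacle here. The only point needing care is Step 1: one must check that the Minkowski sum is contained in $2\conv(K\cup L)$, which follows since $k+l=2\cdot\frac{k+l}{2}$. One must also confirm that using the $\ell_1$-ball (rather than the Euclidean ball) produces only the additive scale $\eta\sqrt d$, and that the count is exactly $\binom{\ell+d}{d}$ rather than a larger sum over sizes. The latter is handled by padding every $J$ to size $d$, which is possible whenever $\ell+d\ge d$.
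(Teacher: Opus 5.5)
Your proposal is correct and follows the paper's proof essentially step for step. The paper likewise uses $B_2^d\subset\sqrt d\,B_1^d$ and writes the thickened set as $M(B_1^\ell\times B_1^d)\subset 2M(B_1^{\ell+d})$ with $M=[X\ \sqrt d\,\eta\,\Id_d]$, which is your inclusion $K+L\subset 2\conv(K\cup L)$ in matrix form; it then applies Corollary~\ref{cor:l1-decomp} with padding to exactly $d$ indices and bounds each piece by Hadamard's inequality plus the density bound (packaged there as Lemma~\ref{lem:crosspoly-measure}).
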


\begin{proof}
Let $X=[x_1\ \cdots\ x_\ell]\in\R^{d\times \ell}$ so that $P=X(B_1^\ell)$.
Since $B_2^d\subset \sqrt d B_1^d$, we have
$$
P+\eta B_2^d \subset P+\sqrt d \eta B_1^d.
$$
Define the matrix
$$
M := \bigl[X\ \ \sqrt d \eta \Id_d\bigr]\in\R^{d\times(\ell+d)}.
$$
Then, using $P=X(B_1^\ell)$,
$$
M(B_1^\ell\times B_1^d)
=\{Xu+\sqrt d \eta v:\ \|u\|_1\le 1,\ \|v\|_1\le 1\}
= P+\sqrt d \eta B_1^d.
$$
Moreover, if $(u,v)\in B_1^\ell\times B_1^d$ then
$\|(u,v)\|_1=\|u\|_1+\|v\|_1\le 2$, hence $(u,v)\in 2B_1^{\ell+d}$ and therefore
$$
B_1^\ell\times B_1^d \subset 2B_1^{\ell+d}
\quad\Longrightarrow\quad
M(B_1^\ell\times B_1^d)\subset 2M(B_1^{\ell+d}).
$$
Combining the above inclusions yields
$$
\rho(P+\eta B_2^d)\subset \rho\bigl(P+\sqrt d \eta B_1^d\bigr)
=\rho M(B_1^\ell\times B_1^d)\subset 2\rho M(B_1^{\ell+d}).
$$

Hence
$$
\rho(P + \eta {B_2^{d}})\subset 2\rho M({B_1^{\ell + d}}).
$$

If $\rank(M)<d$, then $M({B_1^{\ell + d}})$ is contained in a proper subspace of $\R^d$, hence has Gaussian measure $0$, and \eqref{eq:thickening-bound} holds trivially. Assume $\rank(M) = d$.

Apply Corollary~\ref{cor:l1-decomp} to $M$ (with $N = \ell + d$). This yields
$$
M({B_1^{\ell + d}})
\subset
\bigcup_{\substack{J\subset[\ell + d], |J|\le d, \rank(M_J) = |J|}}
\conv\{\pm Me_j: j\in J\}.
$$
Fix such a set $J$. Choose any $I\subset[\ell + d]$ with $J\subset I$ and $|I| = d$. By monotonicity,
$$
\conv\{\pm Me_j: j\in J\}\subset \conv\{\pm Me_i: i\in I\}.
$$
Therefore,
$$
M({B_1^{\ell + d}})
\subset
\bigcup_{\substack{I\subset[\ell + d], |I| = d}}
\conv\{\pm Me_i: i\in I\}.
$$

Taking Gaussian measure, using the union bound, and recalling $\rho(P + \eta {B_2^{d}})\subset 2\rho M({B_1^{\ell + d}})$, we get
$$
\gamma_d\bigl(\rho(P + \eta {B_2^{d}})\bigr)
\le
\gamma_d\bigl(2\rho M({B_1^{\ell + d}})\bigr)
\le
\sum_{\substack{I\subset[\ell + d], |I| = d}}
\gamma_d\Bigl(2\rho \conv\{\pm Me_i: i\in I\}\Bigr).
$$
Each set $\conv\{\pm Me_i: i\in I\}$ is a (possibly degenerate) cross-polytope generated by vectors of norm at most
$$
\max\Bigl\{\max_{i\le \ell}\|x_i\|_2, \max_{j\le d}\|\sqrt d \eta e_j\|_2\Bigr\}
 = \max\{R,\sqrt d \eta\}\le R + \eta\sqrt d.
$$
Applying Lemma~\ref{lem:crosspoly-measure} (absorbing the factor $2$ into $C$) gives, uniformly over $I$,
$$
\gamma_d\Bigl(2\rho \conv\{\pm Me_i: i\in I\}\Bigr)\le
\Big(\frac{C\rho (R + \eta\sqrt d)}{d}\Big)^d.
$$
There are $\binom{\ell + d}{d}$ choices of $I$, so summing yields \eqref{eq:thickening-bound}.
\end{proof}

%%%%%%%%%%%%%%%%%%%%%%%%%%%%%%%%%%%%%%%%%%%
\subsection{Maurey sparsification and union inclusion} \label{sec:maurey}

Throughout this section, if $H$ is a Hilbert space with norm $\|\cdot\|_2$ induced by its inner product, we write
$$
B_2^{H} := \{h\in H: \|h\|_2\le 1\}.
$$

We use the following Maurey-type sparsification lemma (see, e.g.,
\cite[Expos\'e~5]{PisierMaurey1981}).

\begin{lemma}[Maurey lemma in a Hilbert space] \label{lem:maurey}
Let $(H,\langle\cdot,\cdot\rangle)$ be a Hilbert space and let $x_1,\dots,x_N\in H$ satisfy $\|x_i\|_2\le L$. Let $t\in\N$ and set $r_0 := L/\sqrt t$. Then for every $y\in \conv\{\pm x_1,\dots,\pm x_N\}$ there exists a subset $S\subset[N]$ with $|S|\le t$ such that
$$
\dist\bigl(y,\conv\{\pm x_i: i\in S\}\bigr)\le r_0.
$$
\end{lemma}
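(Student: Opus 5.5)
We prove Lemma~\ref{lem:maurey} by the classical empirical (probabilistic) method. Fix $y\in\conv\{\pm x_1,\dots,\pm x_N\}$. By definition of the absolute convex hull we may write
$$
y=\sum_{i=1}^N \lambda_i \epsilon_i x_i,\qquad \lambda_i\ge 0,\ \sum_i\lambda_i\le 1,\ \epsilon_i\in\{\pm1\}.
$$
Adding the zero vector as an extra atom (with weight $1-\sum_i\lambda_i$) turns this into a probability distribution $\mu$ on the finite set $\{\epsilon_1x_1,\dots,\epsilon_Nx_N,0\}$ with mean $y$. Every atom has norm at most $L$.

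Let $Z_1,\dots,Z_t$ be i.i.d.\ with law $\mu$ and set $\bar Z:=\frac1t\sum_{j=1}^t Z_j$. By independence and the Hilbert-space identity for the variance of a sum,
$$
\EE\|\bar Z-y\|_2^2=\frac1t\EE\|Z_1-y\|_2^2=\frac1t\bigl(\EE\|Z_1\|_2^2-\|y\|_2^2\bigr)\le \frac{L^2}{t}.
$$
Hence some realization satisfies $\|\bar Z-y\|_2\le L/\sqrt t=r_0$. Fix such a realization and let $S$ be the set of indices $i$ for which $\epsilon_ix_i$ appears among $Z_1,\dots,Z_t$; atoms equal to $0$ are ignored. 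Then $|S|\le t$.

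It remains to check that $\bar Z$ lies in the hull. The point $\bar Z$ is a convex combination, with weights that are multiples of $1/t$, of vectors $\pm x_i$ with $i\in S$ and of the origin. Since $0\in\conv\{\pm x_i:i\in S\}$ (or $S=\emptyset$ and $\bar Z=0$, where we read the hull as $\{0\}$), it follows that $\bar Z\in\conv\{\pm x_i:i\in S\}$. Therefore
$$
\dist\bigl(y,\conv\{\pm x_i:i\in S\}\bigr)\le\|y-\bar Z\|_2\le r_0.
$$

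The only subtle points are handling a total weight strictly less than one, which the zero atom takes care of, and the edge case $S=\emptyset$; both are bookkeeping. The variance identity is where the Hilbert structure is used.
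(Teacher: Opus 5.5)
Your proof is correct and follows the paper's argument: the same probability measure on $\{\epsilon_i x_i\}\cup\{0\}$ with mean $y$, the same i.i.d.\ empirical average, and the same Hilbert-space variance identity giving $\EE\|\bar Z-y\|_2^2\le L^2/t$. The paper treats $y=0$ separately, while your convention for $S=\emptyset$ (hull $=\{0\}$, consistent with the representation $\{\sum_{i\in S}\alpha_i x_i:\sum_i|\alpha_i|\le 1\}$) also covers the case where every draw is the zero atom, which the paper passes over; one small point, shared with the paper, is that if some of the vectors $\epsilon_i x_i$ coincide you should choose one index per nonzero draw (that is, sample indices rather than vectors) and let $S$ be the set of chosen indices, which guarantees $|S|\le t$.
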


\begin{proof}
If $y=0$, pick any index $i_0\in[N]$ and set $S=\{i_0\}$. Since $0\in\conv\{\pm x_{i_0}\}$, we have $\dist(y,\conv\{\pm x_i:i\in S\})=0\le r_0$. Hence assume $y\neq 0$. Fix a representation $y = \sum_{i = 1}^N a_i x_i$ with $\sum_{i = 1}^N |a_i|\le 1$ (possible by definition of absolute convex hull). Let $\sigma_i := \sgn(a_i)$ (with $\sgn(0) := 0$) and define an $H$-valued random vector $\xi$ by
$$
\PP(\xi = \sigma_i x_i) = |a_i|\quad (i\in[N]),
\quad
\PP(\xi = 0) = 1-\sum_{i = 1}^N|a_i|.
$$
Then $\EE\xi = \sum_i a_i x_i = y$ and $\|\xi\|_2\le L$ almost surely.

Let $\xi_1,\dots,\xi_t$ be i.i.d. copies of $\xi$ and set
$$
Z := \frac{1}{t}\sum_{\ell = 1}^t \xi_\ell.
$$
Let $S\subset[N]$ be the set of indices $i$ such that $\xi_\ell\in\{\pm x_i\}$ for at least one $\ell$; then $|S|\le t$. Moreover, $Z\in \conv\{\pm x_i: i\in S\}$ because $Z$ is an average of $t$ signed atoms from $\{\pm x_i\}\cup\{0\}$.

Since $H$ is a Hilbert space,
$$
\EE\|Z-y\|_2^2
 = 
\EE\Big\|\frac{1}{t}\sum_{\ell = 1}^t(\xi_\ell-\EE\xi)\Big\|_2^2
 = 
\frac{1}{t}\EE\|\xi-\EE\xi\|_2^2
\le
\frac{1}{t}\EE\|\xi\|_2^2
\le
\frac{L^2}{t}.
$$
Thus $\EE\|Z-y\|_2\le \sqrt{\EE\|Z-y\|_2^2}\le L/\sqrt t = r_0$. Therefore there exists an outcome with $\|Z-y\|_2\le r_0$. For that outcome, $Z\in \conv\{\pm x_i: i\in S\}$ with $|S|\le t$, hence $\dist(y,\conv\{\pm x_i: i\in S\})\le r_0$.
\end{proof}

\begin{lemma}[Union inclusion after Maurey] \label{lem:maurey-union}
Let $H$ be a Hilbert space and let $\{x_i\}_{i = 1}^n\subset H$ satisfy $\|x_i\|_2\le L$.
Fix integers $1\le t\le k\le n$ and set $r_0 := L/\sqrt t$. For each $I\subset[n]$ define $Q_I := \conv\{\pm x_i: i\in I\}$. Then
$$
\bigcup_{\substack{I\subset[n], |I| = k}} Q_I
 \subset\
\bigcup_{\substack{S\subset[n], |S| = t}}\bigl(Q_S + r_0 B_2^{H}\bigr).
$$
\end{lemma}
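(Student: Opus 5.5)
The inclusion follows pointwise from Lemma~\ref{lem:maurey}, applied to the subfamily indexed by a single $I$, after padding the resulting support up to size exactly $t$.

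Fix $I\subset[n]$ with $|I|=k$ and a point $y\in Q_I$. We apply Lemma~\ref{lem:maurey} in $H$ to the family $\{x_i\}_{i\in I}$, relabelled as $x_1,\dots,x_k$ with $N=k$. These vectors all satisfy $\|x_i\|_2\le L$, and we use the same $t$, so $r_0=L/\sqrt t$. The lemma produces a subset $S'\subset I$ with $|S'|\le t$ and
$$
\dist\bigl(y,Q_{S'}\bigr)\le r_0 .
$$

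The distance is attained, or at least approximated; to avoid any issue we can note that $Q_{S'}$ is the image of a compact set under a continuous linear map into the finite-dimensional subspace $\spann\{x_i:i\in S'\}$. It is therefore compact, so there is $z\in Q_{S'}$ with $\|y-z\|_2\le r_0$. This gives $y\in Q_{S'}+r_0B_2^H$.

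The only remaining point is that the right-hand side of the statement ranges over sets of size \emph{exactly} $t$, whereas Lemma~\ref{lem:maurey} only gives $|S'|\le t$. Since $t\le k\le n$, we can enlarge $S'$ to some $S\subset[n]$ with $S'\subset S$ and $|S|=t$; for instance, add indices from $I\setminus S'$, which has at least $k-|S'|\ge t-|S'|$ elements. By monotonicity of absolute convex hulls, $Q_{S'}\subset Q_S$, hence
$$
y\in Q_S+r_0B_2^H .
$$
As $I$ and $y$ were arbitrary, the union inclusion follows.

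There is no real obstacle. The only points requiring care are the padding to cardinality exactly $t$ and the observation that the Maurey radius $r_0=L/\sqrt t$ depends only on $t$ and $L$, not on $k$. The degenerate case $y=0$ is already handled inside Lemma~\ref{lem:maurey}.
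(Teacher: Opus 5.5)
Your proof is correct and follows the paper's argument essentially verbatim: you apply the Maurey lemma to the subfamily indexed by $I$, then pad the resulting support inside $I$ to size exactly $t$ using monotonicity of absolute convex hulls. Your compactness remark, which turns $\dist(y,Q_{S'})\le r_0$ into actual membership in $Q_{S'}+r_0B_2^H$, makes explicit a step the paper leaves implicit.
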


\begin{proof}
Fix $I\subset[n]$ with $|I| = k$ and $y\in Q_I$. Apply Lemma~\ref{lem:maurey} to the family $\{x_i: i\in I\}$ to obtain $S\subset I$ with $|S|\le t$ and $\dist(y,Q_S)\le r_0$. If $|S|<t$, enlarge $S$ to some $S'\subset I$ with $|S'| = t$; by monotonicity $Q_S\subset Q_{S'}$, hence still $\dist(y,Q_{S'})\le r_0$. Thus $y\in Q_{S'} + r_0 B_2^{H}$ with $|S'| = t$, as claimed.
\end{proof}

%%%%%%%%%%%%%%%%%%%%%%%%%%%%%%%%%%%%%%%%%%%
\subsection{Union bound} \label{sec:largeU}

In this section we control
$$
p_k(A;\omega) = \gamma_n \bigl(4\rho \mathcal P^{(k)}(A;\omega)\bigr)
\quad (k \text{ large}),
$$
under the global event $\mathcal E^{(U)}$ from Definition~\ref{def:E3}. The argument consists of: 
\begin{enumerate}[leftmargin = 2.2em]
\item quotienting out the span of the chosen $K$-generators;
\item applying a Maurey-type sparsification inside the quotient space, with a new choice
$$
t = t(k) \asymp \frac{k}{\log(n\rho)};
$$
\item bounding Gaussian measure of the resulting Euclidean thickening using the thickening-stable estimate (Lemma~\ref{lem:thickening}).
\end{enumerate}

%%%%%%%%%%%%%%%%%%%%%%%%%%%%%%%%%%%%%%%%%%%
\subsection{Quotient bookkeeping}

For any Euclidean subspace $H\subset\R^n$ we write
$$
B_2^{H} := \{h\in H: \|h\|_2\le 1\},
\quad
\gamma_H := \mathcal L(\pi_H G)\quad\text{ for } G\sim N(0,\Id_n),
$$
where $\pi_H$ denotes the orthogonal projection onto $H$.

\begin{lemma}[Dimension reduction inside a subspace] \label{lem:dim-quotient}
Let $H\subset\R^n$ be a linear subspace with $\dim(H)\ge k$ and let $Q\subset H$ be measurable. Assume $\dim(\spann(Q))\le k$. Then there exists a $k$-dimensional subspace $\widetilde H\subset H$ with $\spann(Q)\subset \widetilde H$ such that for all $\eta\ge 0$,
$$
\gamma_H\bigl(Q + \eta B_2^{H}\bigr)
\le
\gamma_{\widetilde H}\bigl(Q + \eta B_2^{\widetilde H}\bigr).
$$
In particular, identifying $\widetilde H\simeq \R^k$ by an isometry $\Phi$, the right-hand side equals
$\gamma_k\bigl(\Phi(Q) + \eta B_2^k\bigr)$.
\end{lemma}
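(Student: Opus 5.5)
The plan is to pick $\widetilde H$ as any $k$-dimensional subspace of $H$ containing $V:=\spann(Q)$, and to use the orthogonal decomposition $H=\widetilde H\oplus W$ with $W:=\widetilde H^{\perp}\cap H$. Such an $\widetilde H$ exists because $\dim V\le k\le \dim H$; if $\dim V<k$, we complete a basis of $V$ inside $H$. Under this splitting, $\gamma_H$ is the product measure $\gamma_{\widetilde H}\otimes\gamma_W$. This holds because $\pi_H G$ is a standard Gaussian vector in $H$, and its projections onto the orthogonal pieces $\widetilde H$ and $W$ are independent standard Gaussians there.

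The key geometric step is a pointwise inclusion for $x=x_1+x_2$ with $x_1\in\widetilde H$ and $x_2\in W$. Suppose $x\in Q+\eta B_2^H$, so $x=q+b$ with $q\in Q\subset\widetilde H$ and $\|b\|_2\le\eta$. Then $x_1-q=\pi_{\widetilde H}b$ has norm at most $\|b\|_2\le\eta$. Therefore
$$
Q+\eta B_2^H\subset \pi_{\widetilde H}^{-1}\bigl(Q+\eta B_2^{\widetilde H}\bigr)\cap H = \bigl(Q+\eta B_2^{\widetilde H}\bigr)\times W .
$$
The Gaussian measure of the product set on the right is $\gamma_{\widetilde H}(Q+\eta B_2^{\widetilde H})\cdot\gamma_W(W)=\gamma_{\widetilde H}(Q+\eta B_2^{\widetilde H})$, which gives the claimed inequality. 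Equivalently, and more directly, we have $\{\pi_H G\in Q+\eta B_2^H\}\subset\{\pi_{\widetilde H}G\in Q+\eta B_2^{\widetilde H}\}$, using $\pi_{\widetilde H}\pi_H=\pi_{\widetilde H}$ for $\widetilde H\subset H$, and $\pi_{\widetilde H}G$ has law $\gamma_{\widetilde H}$.

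For the final sentence, take an isometry $\Phi:\widetilde H\to\R^k$. It pushes $\gamma_{\widetilde H}$ forward to $\gamma_k$ and maps $B_2^{\widetilde H}$ onto $B_2^k$. Since $\Phi$ is linear, $\Phi(Q+\eta B_2^{\widetilde H})=\Phi(Q)+\eta B_2^k$, so the right-hand side equals $\gamma_k(\Phi(Q)+\eta B_2^k)$.

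There is no real obstacle here. The only points needing care are measurability of $Q+\eta B_2^{\widetilde H}$ (it is a Minkowski sum of a measurable set and a compact set; in the application $Q$ is compact, so the sum is closed) and the fact that a contraction under projection can only shrink the thickening. I would state both briefly.
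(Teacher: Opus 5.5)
Your proposal is correct and matches the paper's proof. The paper also takes an arbitrary $k$-dimensional $\widetilde H\subset H$ containing $\spann(Q)$ and observes that $\pi_{\widetilde H}(Q+\eta B_2^H)=Q+\eta B_2^{\widetilde H}$ (you only need, and prove, the inclusion $\subset$). It then concludes from $\{Z\in Q+\eta B_2^H\}\subset\{\pi_{\widetilde H}Z\in Q+\eta B_2^{\widetilde H}\}$, with $\pi_{\widetilde H}Z$ standard Gaussian in $\widetilde H$, which is exactly your ``more direct'' formulation; your product-measure picture $\gamma_H=\gamma_{\widetilde H}\otimes\gamma_W$ is an equivalent repackaging, and the isometry step is identical.
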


\begin{proof}
Let $E := \spann(Q)\subset H$, so $\dim(E)\le k$. Since $\dim(H)\ge k$, we may choose a $k$-dimensional subspace $\widetilde H\subset H$ with $E\subset \widetilde H$.

Let $\pi_{\widetilde H}:H\to \widetilde H$ be the orthogonal projection within the Hilbert space $H$. Because $Q\subset E\subset \widetilde H$, we have $\pi_{\widetilde H}(Q) = Q$. Also $\pi_{\widetilde H}(B_2^H) = B_2^{\widetilde H}$:
indeed, projection does not increase norm so $\pi_{\widetilde H}(B_2^H)\subset B_2^{\widetilde H}$, and conversely if $z\in B_2^{\widetilde H}$ then $z\in H$ with $\|z\|_2\le 1$ and $\pi_{\widetilde H}(z) = z$.

Hence for every $\eta\ge 0$,
$$
\pi_{\widetilde H}\bigl(Q + \eta B_2^H\bigr)
 = 
\pi_{\widetilde H}(Q) + \eta \pi_{\widetilde H}(B_2^H)
 = 
Q + \eta B_2^{\widetilde H}.
$$
Let $Z$ be a standard Gaussian vector in $H$ (i.e. with law $\gamma_H$). Then $\pi_{\widetilde H}Z$ is a standard Gaussian vector in $\widetilde H$ (i.e. with law $\gamma_{\widetilde H}$),
and
$$
\{Z\in Q + \eta B_2^H\}\subset \{\pi_{\widetilde H}Z\in \pi_{\widetilde H}(Q + \eta B_2^H)\}
 = 
\{\pi_{\widetilde H}Z\in Q + \eta B_2^{\widetilde H}\}.
$$
Taking probabilities yields
$$
\gamma_H(Q + \eta B_2^H)\le \gamma_{\widetilde H}(Q + \eta B_2^{\widetilde H}),
$$
as claimed. The final identification with $\gamma_k$ follows by applying an isometry $\Phi:\widetilde H\to\R^k$.
\end{proof}

%%%%%%%%%%%%%%%%%%%%%%%%%%%%%%%%%%%%%%%%%%%
\subsection{Union bound for $p_k$ in the large-$U$ regime}

Fix $k\in\{0,1,\dots,n\}$. Any $I\subset[2n]$ with $|I| = n$ and $|I\cap(n + [n])| = k$
can be written uniquely as
$$
I = I_1\cup(n + I_2),
\quad
I_1\subset[n], |I_1| = n-k,
\quad
I_2\subset[n], |I_2| = k.
$$
We write $\mathcal P_{I_1,I_2}(A;\omega) := \mathcal P_{I_1\cup(n + I_2)}(A;\omega)$.

\begin{proposition}[Large-$U$ regime: union measure bound] \label{prop:U-bound}
Assume $\mathcal E^{(U)}$ holds. Fix $A\in\mathcal A_\varepsilon$ and the induced exposed $\omega = \Gamma_{S(A)}$. Let $k\in\{1,\dots,n\}$ and set
\begin{equation} \label{eq:tdef}
\Lambda := \log(n\rho),\quad
t = t(k) := \Bigl\lceil\frac{k}{\Lambda}\Bigr\rceil,
\quad
r_0 := \frac{L}{\sqrt t},
\end{equation}
where $L$ is as in \eqref{eq:L-def}. Then
$$
p_k(A;\omega) := \gamma_n\Bigl(4\rho \mathcal P^{(k)}(A;\omega)\Bigr)
$$
satisfies
\begin{equation} \label{eq:pk-U-bound}
p_k(A;\omega)
\le
\binom{n}{k}\binom{n}{t}\binom{t + k}{k}
\Big(\frac{C\rho (L + \sqrt{k} r_0)}{k}\Big)^k,
\end{equation}
for a universal constant $C>0$.

Moreover, since $t\ge k/\Lambda$ one has $\sqrt{k} r_0\le L\sqrt{\Lambda}$; hence (for $n\ge 3$ and $\rho\ge 1$ so that $\Lambda\ge 1$)
$$
L + \sqrt{k} r_0 \le C L\sqrt{\log(n\rho)}.
$$
Consequently, using $\binom{n}{k}\le (en/k)^k$ and absorbing constants,
\begin{equation} \label{eq:pk-U-simplified}
p_k(A;\omega)
\le
\binom{n}{t}\binom{t + k}{k}
\Big(\frac{C\rho L n \sqrt{\log(n\rho)}}{k^2}\Big)^k.
\end{equation}
\end{proposition}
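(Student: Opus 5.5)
We bound $p_k$ by a union over $I_1$, then for each fixed $I_1$ we project onto the orthogonal complement of the $K$-span, sparsify the $U$-part there, and apply the thickening estimate.

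\textbf{Union over $I_1$ and projection.} Writing $\mathcal P^{(k)}=\bigcup_{I_1}\bigcup_{I_2}\mathcal P_{I_1,I_2}$ with $|I_1|=n-k$, the union bound over the $\binom{n}{n-k}=\binom nk$ choices of $I_1$ reduces matters to bounding, for a fixed $I_1$,
$$
\gamma_n\Bigl(4\rho\bigcup_{|I_2|=k}\mathcal P_{I_1,I_2}\Bigr).
$$
Let $E:=\spann\{v_i:i\in I_1\}$ and $H:=E^\perp$. If $\dim E<n-k$, every $\mathcal P_{I_1,I_2}$ is degenerate and the measure is $0$, so assume $\dim H=k$. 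Each $\mathcal P_{I_1,I_2}$ is contained in $E+\pi_H\mathcal P_{I_1,I_2}$, and $\pi_H\mathcal P_{I_1,I_2}=\conv\{\pm\pi_H u_j:j\in I_2\}=:Q_{I_2}$. Since $\pi_H G$ is standard Gaussian on $H$,
$$
\gamma_n\Bigl(4\rho\bigcup_{I_2}\mathcal P_{I_1,I_2}\Bigr)\le\gamma_H\Bigl(4\rho\bigcup_{I_2}Q_{I_2}\Bigr).
$$
In the degenerate case $\dim H>k$, one could instead apply Lemma~\ref{lem:dim-quotient} to each piece, but the zero-measure observation already handles it.

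\textbf{Sparsification and thickening.} Set $x_j:=\pi_H u_j$. On $\mathcal E^{(U)}$, \eqref{eq:U-gen-bound} gives $\|x_j\|_2\le\|u_j\|_2\le L$. Lemma~\ref{lem:maurey-union} in $H$, with $t=t(k)\le k$, gives
$$
\bigcup_{|I_2|=k}Q_{I_2}\subset\bigcup_{|S|=t}\bigl(Q_S+r_0B_2^H\bigr).
$$
A union bound over the $\binom nt$ sets $S$, followed by Lemma~\ref{lem:thickening} in $H\simeq\R^k$ (with $d=k$, $\ell=t$, $R\le L$, $\eta=r_0$, and scale $4\rho$, the $4$ absorbed into $C$), bounds each term by
$$
\binom{t+k}{k}\Bigl(\frac{C\rho(L+r_0\sqrt k)}{k}\Bigr)^k.
$$
Multiplying the three counting factors gives \eqref{eq:pk-U-bound}.

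\textbf{Simplification.} Since $t\ge k/\Lambda$, we have $\sqrt k\,r_0=L\sqrt{k/t}\le L\sqrt\Lambda$, hence $L+\sqrt k\,r_0\le 2L\sqrt\Lambda$ because $\Lambda\ge1$. Then $\binom nk\le(en/k)^k$ merges into the $k$-th power, producing $\bigl(C\rho Ln\sqrt{\Lambda}/k^2\bigr)^k$, which is \eqref{eq:pk-U-simplified}.

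\textbf{Main obstacle.} The delicate point is that the polytopes are unions of mixed $K/U$ pieces, so the reduction to $H$ must be an inclusion of sets, not merely of Gaussian measures. This is handled by the containment $\mathcal P_{I_1,I_2}\subset E+Q_{I_2}$, together with the fact that the Maurey step is applied uniformly over $I_2$ for a fixed $I_1$ (so the same $H$ serves all $I_2$). Everything else is bookkeeping.
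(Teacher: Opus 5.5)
Your proposal is correct and follows the paper's argument essentially step for step: union over $I_1$, projection onto $(\spann\{v_i\}_{i\in I_1})^\perp$, the Maurey union inclusion, a union bound over $S$, the thickening lemma in dimension $k$, and the same final simplification. The one small difference is that you dispose of the case $\dim E<n-k$ by noting it has zero measure, so that $\dim H=k$ exactly; the paper instead keeps $\dim H\ge k$ and invokes Lemma~\ref{lem:dim-quotient} for each $S$ to pass to a $k$-dimensional subspace, and both routes are valid.
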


\begin{proof}
Fix $I_1\subset[n]$ with $|I_1| = n-k$ and define
$$
V := \spann\{v_i: i\in I_1\}\subset\R^n,
\quad
H := V^\perp.
$$
Then $\dim(H)\ge k$ because $\dim(V)\le n-k$.

For each $i\in[n]$ set $\tilde u_i := \pi_H(u_i)\in H$ and for $J\subset[n]$ define
$$
Q_J := \conv\{\pm \tilde u_i: i\in J\}\subset H.
$$
Since $\pi_H(v_i) = 0$ for all $i\in I_1$, Lemma~\ref{lem:proj-absconv} implies that for every $I_2\subset[n]$ with $|I_2| = k$,
$$
\pi_H\bigl(\mathcal P_{I_1,I_2}(A;\omega)\bigr)
 = 
\conv\Bigl(\{\pm\pi_H(v_i): i\in I_1\}\cup\{\pm \pi_H(u_i): i\in I_2\}\Bigr)
 = 
\conv\{\pm \tilde u_i: i\in I_2\}
 = 
Q_{I_2}.
$$
Therefore, by Lemma~\ref{lem:proj-monotone},
\begin{equation} \label{eq:quotient-step}
\gamma_n\Bigl(4\rho\bigcup_{|I_2| = k}\mathcal P_{I_1,I_2}(A;\omega)\Bigr)
\le
\gamma_H\Bigl(4\rho\bigcup_{|I_2| = k}Q_{I_2}\Bigr).
\end{equation}

On $\mathcal E^{(U)}$, \eqref{eq:U-gen-bound} gives $\|u_i\|_2\le L$ for all $i$, hence $\|\tilde u_i\|_2\le L$. Apply Lemma~\ref{lem:maurey-union} in the Hilbert space $H$ with parameters $(k,t)$ to obtain
$$
\bigcup_{|I_2| = k}Q_{I_2}
\subset
\bigcup_{|S| = t}\bigl(Q_S + r_0 B_2^{H}\bigr),
\quad r_0 = \frac{L}{\sqrt t}.
$$
Taking Gaussian measure and using the union bound gives
\begin{equation} \label{eq:union-S}
\gamma_H\Bigl(4\rho\bigcup_{|I_2| = k}Q_{I_2}\Bigr)
\le
\binom{n}{t}\max_{|S| = t}\gamma_H\bigl(4\rho(Q_S + r_0 B_2^{H})\bigr).
\end{equation}

Fix $S\subset[n]$ with $|S| = t$. Then $\dim(\spann(Q_S))\le t\le k$. Choose a $k$-dimensional subspace $\widetilde H\subset H$ containing $\spann(Q_S)$ and apply Lemma~\ref{lem:dim-quotient}:
$$
\gamma_H\bigl(4\rho(Q_S + r_0 B_2^{H})\bigr)
\le
\gamma_{\widetilde H}\bigl(4\rho(Q_S + r_0 B_2^{\widetilde H})\bigr).
$$
Identify $\widetilde H\simeq\R^k$ by an isometry. Apply Lemma~\ref{lem:thickening} in dimension $k$ with $\ell = t$ and thickening radius $\eta = r_0$. Since all generators of $Q_S$ have Euclidean norm at most $L$, we obtain
$$
\gamma_{\widetilde H}\bigl(4\rho(Q_S + r_0 B_2^{\widetilde H})\bigr)
\le
\binom{t + k}{k}\Big(\frac{C\rho (L + \sqrt{k} r_0)}{k}\Big)^k,
$$
absorbing the factor $4$ into the universal constant $C$.

Insert this into \eqref{eq:union-S} and then into \eqref{eq:quotient-step}. For the fixed $I_1$ we get
$$
\gamma_n\Bigl(4\rho\bigcup_{|I_2| = k}\mathcal P_{I_1,I_2}(A;\omega)\Bigr)
\le
\binom{n}{t}\binom{t + k}{k}\Big(\frac{C\rho (L + \sqrt{k} r_0)}{k}\Big)^k.
$$
Finally, take the union over all $I_1\subset[n]$ with $|I_1| = n-k$ (there are $\binom{n}{k}$ such choices) and apply a union bound:
$$
p_k(A;\omega)
 = 
\gamma_n\Bigl(4\rho \mathcal P^{(k)}(A;\omega)\Bigr)
\le
\binom{n}{k}\binom{n}{t}\binom{t + k}{k}\Big(\frac{C\rho (L + \sqrt{k} r_0)}{k}\Big)^k,
$$
which is \eqref{eq:pk-U-bound}.

For \eqref{eq:pk-U-simplified}, first note that $t = \lceil k/\Lambda\rceil$ implies $t\ge k/\Lambda$, hence $k/t\le \Lambda$. Therefore
$$
\sqrt{k} r_0 = \sqrt{k} \frac{L}{\sqrt t} = L\sqrt{\frac{k}{t}}\le L\sqrt{\Lambda}.
$$
Assuming $n\ge 3$ and $\rho\ge 1$ gives $\Lambda = \log(n\rho)\ge \log 3>1$, so $1 + \sqrt{\Lambda}\le 2\sqrt{\Lambda}$ and
$$
L + \sqrt{k}r_0 \le L(1 + \sqrt{\Lambda})\le 2L\sqrt{\Lambda}
 = 2L\sqrt{\log(n\rho)}.
$$
Also $\binom{n}{k}\le (en/k)^k$. Hence
$$
\binom{n}{k}\Big(\frac{C\rho (L + \sqrt{k} r_0)}{k}\Big)^k
\le
\Big(\frac{en}{k}\cdot \frac{C\rho\cdot 2L\sqrt{\log(n\rho)}}{k}\Big)^k
 = 
\Big(\frac{C\rho L n \sqrt{\log(n\rho)}}{k^2}\Big)^k,
$$
after adjusting $C$. Substituting into \eqref{eq:pk-U-bound} yields \eqref{eq:pk-U-simplified}.
\end{proof}

\begin{lemma}[Entropy bound for $\binom{n}{t(k)}$] \label{lem:entropy-tk}
Assume $n\ge 3$, $\rho\ge 1$, and set $\Lambda := \log(n\rho)\ge 1$. Let $k$ satisfy $\Lambda\le k\le n$ and define
$$
t(k) := \left\lceil\frac{k}{\Lambda}\right\rceil.
$$
Then
$$
\binom{n}{t(k)}\le \exp(C_1 k)
$$
for a universal constant $C_1>0$.
\end{lemma}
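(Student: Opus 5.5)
We bound $\binom{n}{t}$ with $t:=t(k)$ by the standard estimate $\binom{n}{t}\le (en/t)^t$ (Lemma~\ref{lem:binombound}). This gives
$$
\log\binom{n}{t}\le t\log\Big(\frac{en}{t}\Big).
$$
It therefore suffices to show that $t\log(en/t)\le C_1 k$. Two elementary facts about $t$ will be used. First, $\Lambda\le k$ gives $k/\Lambda\ge 1$, so the ceiling at most doubles the quantity: $t\le k/\Lambda+1\le 2k/\Lambda$. Second, $t\ge k/\Lambda$, and hence $en/t\le en\Lambda/k$.

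Combining these two facts,
$$
t\log\Big(\frac{en}{t}\Big)\le \frac{2k}{\Lambda}\,\log\Big(\frac{en\Lambda}{k}\Big).
$$
Using $k\ge\Lambda$ once more, we get $en\Lambda/k\le en$. Therefore the right-hand side is at most
$$
\frac{2k}{\Lambda}\bigl(1+\log n\bigr).
$$
Since $\rho\ge 1$, we have $\Lambda=\log(n\rho)\ge\log n$. Also $\Lambda\ge 1$, because $\log 3>1$. Together these give $(1+\log n)/\Lambda\le 2$, so
$$
\log\binom{n}{t}\le 4k,
$$
which is the claim with $C_1=4$.

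One edge case needs checking: if $t>n$, then $\binom{n}{t}=0$ and the bound is trivial. In any case $t\le k\le n$, because $\Lambda\ge 1$. I do not expect a real obstacle here. The only care needed is to handle the ceiling (via $k\ge\Lambda$) and to compare $\log n$ with $\Lambda$ (via $\rho\ge 1$). Note that the monotonicity of $x\mapsto x\log(en/x)$ on $(0,n]$ is not needed, since we bound the two factors separately.
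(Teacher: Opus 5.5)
Your proof is correct and follows essentially the same route as the paper: you use $t\le 2k/\Lambda$, bound $\binom{n}{t}$ by $(en)^t$, and use $\log(en)\le 2\Lambda$ to get $C_1=4$. The only cosmetic difference is that you keep the factor $1/t$ through $t\ge k/\Lambda$ before discarding it, whereas the paper discards it immediately via $(en/t)^t\le (en)^t$.
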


\begin{proof}
Let $t := t(k)$. Since $k\ge \Lambda$, we have $k/\Lambda\ge 1$, hence
$$
t = \left\lceil\frac{k}{\Lambda}\right\rceil \le \frac{k}{\Lambda} + 1 \le \frac{2k}{\Lambda}.
$$
Using the trivial bound $\binom{n}{t}\le (en/t)^t\le (en)^t$, we get
$$
\binom{n}{t}\le \exp\bigl(t\log(en)\bigr).
$$
Next, since $\rho\ge 1$, we have $\log n\le \log(n\rho) = \Lambda$, and therefore
$$
\log(en) = \log n + 1 \le \Lambda + 1 \le 2\Lambda,
$$
because $\Lambda\ge 1$. Hence
$$
t\log(en)\le \frac{2k}{\Lambda}\cdot 2\Lambda = 4k,
$$
and consequently $\binom{n}{t}\le e^{4k}$. The claim follows with $C_1 = 4$.
\end{proof}

%%%%%%%%%%%%%%%%%%%%%%%%%%%%%%%%%%%%%%%%%%%
%%%%%%%%%%%%%%%%%%%%%%%%%%%%%%%%%%%%%%%%%%%
\section{Completion of the proof and optimization} \label{sec:completion}

%%%%%%%%%%%%%%%%%%%%%%%%%%%%%%%%%%%%%%%%%%%
\subsection{Parameter choice and the $4/7$ exponent}

Fix $C\ge 1$ and $c_0\in(0,1]$. Define
$$
\rho = \rho(n) := c_0 n^{4/7}(\log n)^{-C}.
$$
Note that there exists $n_0 = n_0(C,c_0)$ such that for all $n\ge n_0$ one has $\rho\ge 1$ and
\begin{equation} \label{eq:log-comparability}
\log n \le \log(n\rho) \le 2\log n.
\end{equation}

\begin{lemma}[Verification of the parameter constraints]
 \label{lem:verify-constraints}
Fix the absolute constants appearing in the constraints:
\begin{itemize}
\item $c_{\mathrm{suppr}},c_{\mathrm{decay}}>0$ from Lemma~\ref{lem:suppression-measure};
\item $C_\star>0$ from Definition~\ref{def:EA1};
\item the absolute constant $C_{\mathrm{EK}}>0$ in Lemma~\ref{lem:EK-prob} (used in \eqref{eq:EK-conds});
\item the absolute constant $C_{\mathrm{U}}>0$ in Proposition~\ref{prop:U-bound} (i.e. the universal $C$ in \eqref{eq:pk-U-simplified}
and thus in \eqref{eq:rho-largeU-cond});
\item the constants $C_0>0$ (in $L$, see \eqref{eq:L-def}) and $C_1>0$ (in Lemma~\ref{lem:entropy-tk}).
\end{itemize}
Choose any $C\ge 2$ and any $c_0\in(0,1]$, and define $\rho$ as above. Let
$$
\Lambda := \log(n\rho).
$$
Let
\begin{equation} \label{eq:params-choice-47}
\tilde s := \Bigl\lceil n^{6/7}\Lambda^{2/7}\Bigr\rceil,\quad
r := \Bigl\lceil C_2 \tilde s \Lambda\Bigr\rceil,\quad
s := \Bigl\lfloor \frac{\tilde s^2}{C_3 n \Lambda}\Bigr\rfloor,
\end{equation}
where $C_2,C_3>0$ are absolute constants chosen as follows:
$$
C_3\ge C_{\mathrm{EK}},\quad
C_2\ge \max\Bigl\{4,\frac{16}{c_{\mathrm{decay}}}\Bigr\}.
$$
Then there exist $n_1$ and $c_0^\ast\in(0,1]$ (depending only on the displayed absolute constants)
such that if $c_0\le c_0^\ast$ and $n\ge n_1$, all parameter constraints
\eqref{eq:EK-conds}, \eqref{eq:r-beats-comb}, \eqref{eq:scale}, and \eqref{eq:rho-largeU-cond} hold,
and in addition
\begin{equation} \label{eq:tilde-s-ge-Lambda}
\tilde s \ge \Lambda.
\end{equation}
\end{lemma}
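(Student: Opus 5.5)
The plan is to verify each constraint directly by substituting the explicit choices \eqref{eq:params-choice-47} and reducing it to a comparison of powers of $n$ and $\Lambda$. Throughout I will use \eqref{eq:log-comparability}, $\log n\le\Lambda\le 2\log n$, to trade $\Lambda$ for $\log n$. From the definitions,
$$
\tilde s\asymp n^{6/7}\Lambda^{2/7},\qquad
r\asymp C_2 n^{6/7}\Lambda^{9/7},\qquad
s\asymp \frac{n^{5/7}\Lambda^{-3/7}}{C_3}.
$$
The first step is to record elementary facts for $n\ge n_1$. Since $n^{6/7}\Lambda^{2/7}\ll n/2$, we have $\tilde s\le n/2$. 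Since $\tilde s\gg\log^2 n\ge \Lambda$, we have $\tilde s\ge\log^2 n$, which also gives \eqref{eq:tilde-s-ge-Lambda}. Next, $\tilde s^2/(C_3n\Lambda)\to\infty$, so the floor defining $s$ loses at most a factor $2$; this gives $1\le s\le n$ and $s\ge \tilde s^2/(2C_3n\Lambda)$. Consequently
$$
L=C_0\sqrt{\tfrac{n}{s}\Lambda}\le C_0\sqrt{2C_3}\,\frac{n\Lambda}{\tilde s}.
$$

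The second step handles \eqref{eq:EK-conds} and \eqref{eq:r-beats-comb}.
\begin{itemize}
\item $4\tilde s\le r$ holds because $C_2\ge4$ and $\Lambda\ge1$.
\item $r\le (n-\tilde s)/2$ holds because $r\lesssim n^{6/7}(\log n)^{9/7}=o(n)$.
\item $\tilde s\ge \log^2n$ was shown above.
\item $\tilde s^2/(ns)\ge C_3\Lambda\ge C_{\mathrm{EK}}\Lambda$ follows from the floor in the definition of $s$, since $C_3\ge C_{\mathrm{EK}}$.
\item For \eqref{eq:r-beats-comb}, note $\log(en/\tilde s)\le \log(en)\le 2\Lambda$. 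Then $C_2\ge 16/c_{\mathrm{decay}}$ gives $c_{\mathrm{decay}}r\ge 16\tilde s\Lambda\ge 8\tilde s\Lambda+\log 16\ge 4\tilde s\log(en/\tilde s)+\log16$.
\item The requirement $r\ge8$ of Proposition~\ref{prop:K-bound} is immediate.
\end{itemize}

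The third step is the scaling condition \eqref{eq:scale}. Here $\sqrt r\le \sqrt{2C_2}\,n^{3/7}\Lambda^{9/14}$, so
$$
4\rho C_\star\sqrt r\le 4C_\star\sqrt{2C_2}\,c_0\, n\,(\log n)^{-C}(2\log n)^{9/14}.
$$
Because $C\ge 2>9/14$, the right-hand side is at most $c_{\mathrm{suppr}}n$ once $c_0\le c_0^\ast$ (or $n\ge n_1$).

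The last step, and the one I expect to need the most care, is \eqref{eq:rho-largeU-cond}. I anticipate that it is the condition ensuring $\sum_{k>\tilde s}p_k\le 1/4$ via \eqref{eq:pk-U-simplified}, namely something like
$$
\frac{C_{\mathrm U}\rho L n\sqrt\Lambda}{\tilde s^2}\le c\,e^{-C_1-2}.
$$
The argument would run as follows. Lemma~\ref{lem:entropy-tk} (applicable since $k>\tilde s\ge\Lambda$) gives $\binom{n}{t}\le e^{C_1k}$, and $\binom{t+k}{k}\le 2^{t+k}\le 4^k$. The factor $(\cdot/k^2)^k$ is monotone in $k$, so each term is at most $2^{-k}$ and the geometric sum over $k>\tilde s$ is at most $1/4$. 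Whatever the exact form turns out to be, I would check it by inserting the bound on $L$:
$$
\frac{\rho L n\sqrt\Lambda}{\tilde s^2}\lesssim \frac{\rho n^2\Lambda^{3/2}}{\tilde s^3}\le \frac{\rho\,\Lambda^{3/2}}{n^{4/7}\Lambda^{6/7}}=c_0(\log n)^{-C}\Lambda^{9/14}.
$$
Since $C\ge2$, this is at most a small constant after shrinking $c_0^\ast$. The only real risk is that the precise form of \eqref{eq:rho-largeU-cond} carries different logarithmic powers, but $C\ge2$ leaves ample slack against the $\Lambda^{9/14}$ loss.
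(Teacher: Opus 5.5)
Your proposal is correct and follows the paper's proof essentially step by step. It uses the same ceiling/floor bounds to get $s\ge \tilde s^2/(2C_3 n\Lambda)$ and $L\le C_0\sqrt{2C_3}\,n\Lambda/\tilde s$. It checks \eqref{eq:EK-conds}, \eqref{eq:r-beats-comb} and \eqref{eq:scale} the same way; your bound $\log(en/\tilde s)\le 2\Lambda$ is a harmless simplification of the paper's $\le 2\log n$. It also reduces the large-$U$ quantity to $c_0(\log n)^{-C}\Lambda^{9/14}$, as the paper does.

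The actual condition \eqref{eq:rho-largeU-cond} is $C_{\mathrm U}\rho L n\sqrt{\Lambda}/\tilde s^2\le 1/(4e^{2C_1+1})$, which has a fixed constant on the right. Your final estimate therefore settles it by shrinking $c_0^\ast$, exactly as in the paper.
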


\begin{proof}
See Appendix~\ref{sec:verify-constraints}
\end{proof}

Set $\Lambda := \log(n\rho)$.
Under the standard choice
$$
r \asymp \tilde s \Lambda
\quad\text{ and}\quad
s \asymp \frac{\tilde s^2}{n\Lambda},
$$
the two regimes impose the following dominant (polynomial) constraints:

\subsubsection*{(A) Small-$U$ (scaling).}

The scaling requirement \eqref{eq:scale} becomes, up to absolute constants,
\begin{equation} \label{eq:constraint-smallU-47}
\rho \lesssim \frac{n}{\sqrt r}
 \asymp \frac{n}{\sqrt{\tilde s \Lambda}}.
\end{equation}

\subsubsection*{(B) Large-$U$ (Maurey and thickening tail).}

The large-$U$ tail condition \eqref{eq:rho-largeU-cond}, together with the choice
$s\asymp \tilde s^2/(n\Lambda)$, implies
$$
L \asymp \sqrt{\frac{n}{s}\Lambda}\asymp \frac{n\Lambda}{\tilde s},
$$
and therefore \eqref{eq:rho-largeU-cond} is (again up to absolute constants) equivalent to
\begin{equation} \label{eq:constraint-largeU-47}
\rho \lesssim \frac{\tilde s^3}{n^2 \Lambda^{3/2}}.
\end{equation}

\subsubsection*{Balancing.}

Equating the right-hand sides of \eqref{eq:constraint-smallU-47} and \eqref{eq:constraint-largeU-47} yields
$$
\frac{n}{\sqrt{\tilde s \Lambda}}
 \asymp\
\frac{\tilde s^3}{n^2 \Lambda^{3/2}}
\quad\Longleftrightarrow\quad
\tilde s^{7/2} \asymp n^3 \Lambda.
$$
Hence the balanced choice is
\begin{equation} \label{eq:balanced-tilde-s-47}
\tilde s \asymp (n^3\Lambda)^{2/7}
 = n^{6/7}\Lambda^{2/7}.
\end{equation}
Substituting \eqref{eq:balanced-tilde-s-47} into \eqref{eq:constraint-smallU-47} gives
$$
\rho \asymp \frac{n}{\sqrt{\tilde s \Lambda}}
 = 
\frac{n}{\sqrt{n^{6/7}\Lambda^{2/7}\cdot \Lambda}}
 = 
n^{4/7}\Lambda^{-9/14}.
$$
Finally, by~\eqref{eq:log-comparability} one has $\Lambda = \log(n\rho)\asymp\log n$ for polynomial $\rho$,
so the balanced scales can be written as
$$
\tilde s \asymp n^{6/7}(\log n)^{2/7},
\quad
\rho \asymp n^{4/7}(\log n)^{-9/14},
$$
up to universal constants and additional (harmless) logarithmic slack.

\begin{remark}[Logarithmic bookkeeping] \label{rem:log-slack}
The heuristic optimization in this section suggests the more specific scale $\rho \asymp n^{4/7}\Lambda^{-9/14}$ with $\Lambda = \log(n\rho)$. In Theorem~\ref{thm:main} we allow an arbitrary $(\log n)^{-C}$ loss to avoid carrying explicit log-exponents through several nested union bounds (most notably in the discretization net and the uniform tilt event) and through auxiliary constraints such as $\tilde s\ge \log^2 n$. A more systematic tracking of these logarithmic contributions should yield an explicit power of $\log n$ in the final statement, at the expense of additional bookkeeping.
\end{remark}

%%%%%%%%%%%%%%%%%%%%%%%%%%%%%%%%%%%%%%%%%%%
\subsection{Proof of Theorem~\ref{thm:main}} \label{subsec:proof-main}

We also use the following large-$U$ scale condition (compare Lemma~\ref{lem:verify-constraints}):
\begin{equation} \label{eq:rho-largeU-cond}
\frac{C_{\mathrm U} \rho L n \sqrt{\Lambda}}{\tilde s^2} \le \frac{1}{4e^{2C_1 + 1}},
\end{equation}
where $\Lambda := \log(n\rho)$, $L$ is as in \eqref{eq:L-def}, $C_1$ is the constant from Lemma~\ref{lem:entropy-tk}, and $C_{\mathrm U}>0$ is the absolute constant implicit in \eqref{eq:pk-U-simplified}.

\begin{proof}[Proof of Theorem~\ref{thm:main}]
Fix the absolute constants from Lemmas~\ref{lem:suppression-measure} and~\ref{lem:crosspoly-measure}, Lemma~\ref{lem:EK-prob}, Proposition~\ref{prop:U-bound}, and Lemma~\ref{lem:entropy-tk}, and choose $C\ge 2$ and $c_0\in(0,1]$.
Set
$$
m := n^3,\quad
\rho := c_0 n^{4/7}(\log n)^{-C},\quad
\Lambda := \log(n\rho),\quad
\varepsilon := \frac{1}{\rho n^2}.
$$
By~\eqref{eq:log-comparability}, for $n\ge n_0(C,c_0)$ one has $\rho\ge 1$ and $\log n\le \Lambda\le 2\log n$.

Choose parameters $(\tilde s,r,s)$ as in \eqref{eq:params-choice-47}. By Lemma~\ref{lem:verify-constraints}, if $c_0\le c_0^\ast$ and $n\ge n_1$, then the constraints \eqref{eq:EK-conds}, \eqref{eq:r-beats-comb}, \eqref{eq:scale}, and \eqref{eq:rho-largeU-cond} hold, and in addition $\tilde s\ge \Lambda$.

Let $\mathcal A_\varepsilon = \mathcal A_{m,n}(\varepsilon)$ be the discretization net from Lemma~\ref{lem:discretization}. Recall that $\varepsilon\rho n^2 = 1$, so the hypothesis \eqref{eq:disc-epscond} is satisfied. By Lemma~\ref{lem:discretization} and \eqref{eq:disc-prob},
\begin{equation} \label{eq:Erho-net}
\PP(\mathcal E_\rho)
\le
\PP\Bigl(\exists A\in\mathcal A_\varepsilon:\quad G_m\subset 2\rho \Gamma A(B_1^n)\Bigr).
\end{equation}

Define the ``good'' event
$$
\mathcal G := \mathcal E^{(U)} \cap \mathcal E^{(K)}(r,\tilde s).
$$
By Lemma~\ref{lem:E3prob} and Lemma~\ref{lem:EK-prob},
\begin{equation} \label{eq:G-prob}
\PP(\mathcal G^c)\le \exp\bigl(-c n\log(n\rho)\bigr) + \frac{1}{n}.
\end{equation}

\subsubsection*{Step 1: uniform bound on $\gamma_n(\mathcal K_A)$ on $\mathcal G$.}

Fix $A\in\mathcal A_\varepsilon$ and let $\omega = \Gamma_{S(A)}$ be the exposed realization. Recall $\mathcal K_A(\rho;\omega)$ from \eqref{eq:KAdef}. By Lemma~\ref{lem:bridge},
$$
\gamma_n\bigl(\mathcal K_A(\rho;\omega)\bigr)
\le p(A;\omega)
\le \sum_{k = 0}^n p_k(A;\omega),
\quad
p_k(A;\omega) := \gamma_n\bigl(4\rho \mathcal P^{(k)}(A;\omega)\bigr).
$$

On the event $\mathcal E^{(K)}(r,\tilde s)$, Proposition~\ref{prop:K-bound} and the constraints \eqref{eq:scale}--\eqref{eq:r-beats-comb} yield
\begin{equation} \label{eq:smallU-part}
\sum_{k = 0}^{\tilde s} p_k(A;\omega)\le \frac14.
\end{equation}

On the event $\mathcal E^{(U)}$, Proposition~\ref{prop:U-bound} gives for every $k\in\{1,\dots,n\}$ the bound \eqref{eq:pk-U-simplified}.
Fix $k\in\{\tilde s + 1,\dots,n\}$. Since $\tilde s\ge \Lambda$, we have $k\ge \Lambda$ and Lemma~\ref{lem:entropy-tk} applies, yielding
$$
\binom{n}{t(k)} \le \exp(C_1 k),
\quad
t(k) = \Bigl\lceil \frac{k}{\Lambda}\Bigr\rceil \le k.
$$

Also, since $t(k)=\lceil k/\Lambda\rceil\le k$, we have
$$
\binom{t(k)+k}{k}\le 2^{t(k)+k}\le 2^{2k}=4^k=e^{k\log 4}\le e^{2k}.
$$
Therefore \eqref{eq:pk-U-simplified} and Lemma~\ref{lem:entropy-tk} give
$$
p_k(A;\omega)
\le \binom{n}{t(k)}\binom{t(k)+k}{k}
\Bigl(\frac{C_{\mathrm U}\rho L n\sqrt{\Lambda}}{k^2}\Bigr)^k
\le e^{C_1k}\cdot e^{2k}\Bigl(\frac{C_{\mathrm U}\rho L n\sqrt{\Lambda}}{k^2}\Bigr)^k
= e^{(C_1+2)k}\Bigl(\frac{C_{\mathrm U}\rho L n\sqrt{\Lambda}}{k^2}\Bigr)^k.
$$
Since $k\ge \tilde s+1$, we have $k^2\ge \tilde s^2$, hence
$$
p_k(A;\omega)
\le e^{(C_1+2)k}\Bigl(\frac{C_{\mathrm U}\rho L n\sqrt{\Lambda}}{\tilde s^2}\Bigr)^k.
$$
Using \eqref{eq:rho-largeU-cond}, we get
$$
\frac{C_{\mathrm U}\rho L n\sqrt{\Lambda}}{\tilde s^2}\le \frac{1}{4e^{2C_1+1}},
$$
and therefore
$$
p_k(A;\omega)\le \Bigl(\frac{e^{C_1+2}}{4e^{2C_1+1}}\Bigr)^k
=\Bigl(\frac{e^{1-C_1}}{4}\Bigr)^k
\le \Bigl(\frac14\Bigr)^k
\quad\text{since } C_1\ge 1.
$$

Using \eqref{eq:rho-largeU-cond} and $C_1\ge 1$, we obtain $p_k(A;\omega)\le (1/4)^k$ and therefore
\begin{equation} \label{eq:largeU-part}
\sum_{k = \tilde s + 1}^n p_k(A;\omega)
\le
\sum_{k = \tilde s + 1}^\infty \Big(\frac14\Big)^k
\le
\frac14.
\end{equation}

Combining \eqref{eq:smallU-part} and \eqref{eq:largeU-part} yields
$$
\sum_{k = 0}^n p_k(A;\omega)
 = 
\sum_{k = 0}^{\tilde s} p_k(A;\omega) + \sum_{k = \tilde s + 1}^{n} p_k(A;\omega)
\le \frac12,
$$
and hence
$$
\gamma_n\bigl(\mathcal K_A(\rho;\omega)\bigr)\le \frac12.
$$
uniformly for all $A\in\mathcal A_\varepsilon$ on the event $\mathcal G$.

\subsubsection*{Step 2: powering beats the discretization net.}

For each $A\in\mathcal A_\varepsilon$ define the (exposed) event
$$
\mathcal G_A := \Bigl\{\gamma_n\bigl(\mathcal K_A(\rho;\Gamma_{S(A)})\bigr)\le \frac12\Bigr\}.
$$
Since $\mathcal K_A(\rho;\Gamma_{S(A)})$ depends only on $\Gamma_{S(A)}$, the event $\mathcal G_A$ is $\mathcal F_A$-measurable.
Moreover, Step~1 shows that on $\mathcal G$ one has $\mathcal G\subset \bigcap_{A\in\mathcal A_\varepsilon}\mathcal G_A$.

Fix $A\in\mathcal A_\varepsilon$.
By Lemma~\ref{lem:powering} and \eqref{eq:powering-cond},
$$
\PP\Bigl(G_m\subset 2\rho \Gamma A(B_1^n) \Big| \mathcal F_A\Bigr)
\le
\gamma_n\bigl(\mathcal K_A(\rho;\Gamma_{S(A)})\bigr)^{N(A)}.
$$
Therefore,
$$
\PP\Bigl(\bigl\{G_m\subset 2\rho \Gamma A(B_1^n)\bigr\}\cap \mathcal G_A\Bigr)
 = 
\EE\Bigl[\mathbf 1_{\mathcal G_A} 
\PP\bigl(G_m\subset 2\rho \Gamma A(B_1^n) \mid \mathcal F_A\bigr)\Bigr]
\le
\EE\Bigl[\mathbf 1_{\mathcal G_A} \Big(\frac12\Big)^{N(A)}\Bigr]
\le
2^{-N(A)}.
$$
Since $|S(A)|\le n^2$ by \eqref{eq:SA-size} and $m = n^3$, one has $N(A) = m-|S(A)|\ge n^3-n^2$, hence
$$
\PP\Bigl(\bigl\{G_m\subset 2\rho \Gamma A(B_1^n)\bigr\}\cap \mathcal G_A\Bigr)
\le
2^{-(n^3-n^2)}.
$$
Union bounding over $A\in\mathcal A_\varepsilon$ gives
\begin{equation} \label{eq:net-powering}
\PP\Bigl(\exists A\in\mathcal A_\varepsilon: G_m\subset 2\rho \Gamma A(B_1^n) \text{ and } \mathcal G\Bigr)
\le
\PP\Bigl(\exists A\in\mathcal A_\varepsilon: G_m\subset 2\rho \Gamma A(B_1^n) \text{ and } \mathcal G_A\Bigr)
\le
|\mathcal A_\varepsilon| 2^{-(n^3-n^2)}.
\end{equation}
By \eqref{eq:cardAeps-log}, $\log|\mathcal A_\varepsilon|\le C n^2\log(n\rho) = o(n^3)$, hence the right-hand side of \eqref{eq:net-powering}
is at most $\exp(-c'n^3)$ for all sufficiently large $n$.

\subsubsection*{Step 3: conclude.}

Combining \eqref{eq:Erho-net}, \eqref{eq:net-powering}, and \eqref{eq:G-prob} yields, for all sufficiently large $n$,
$$
\PP(\mathcal E_\rho)
\le
\PP(\mathcal G^c)
 + 
\PP\Bigl(\exists A\in\mathcal A_\varepsilon: G_m\subset 2\rho \Gamma A(B_1^n) \text{ and } \mathcal G\Bigr)
\le
\frac{1}{n} + \exp\bigl(-c n\log(n\rho)\bigr) + \exp(-c'n^3)
\le
\frac{2}{n}.
$$
Since $\mathcal E_\rho = \{d_{\mathrm{BM}}(G_m, B_1^n)\le \rho\}$ by \eqref{eq:E-rho}, we obtain
$$
\PP\Bigl\{d_{\mathrm{BM}}(G_m, B_1^n)\ge \rho\Bigr\}\ge 1-\frac{2}{n}.
$$
Recalling $\rho = c_0 n^{4/7}(\log n)^{-C}$ proves the theorem with $c := c_0$ (after possibly decreasing $c_0$ and increasing $C$ by absolute factors).
\end{proof}

%%%%%%%%%%%%%%%%%%%%%%%%%%%%%%%%%%%%%%%%%%%
\appendix

%%%%%%%%%%%%%%%%%%%%%%%%%%%%%%%%%%%%%%%%%%%
%%%%%%%%%%%%%%%%%%%%%%%%%%%%%%%%%%%%%%%%%%%
\section{Standard auxiliary results} \label{app:standard}

%%%%%%%%%%%%%%%%%%%%%%%%%%%%%%%%%%%%%%%%%%%
\subsection{Elementary convexity and combinatorics}

\begin{lemma}[Cross-polytopes as linear images and volume of ${B_1^{d}}$] \label{lem:crosspoly-linear-volume}
Let $d\in\N$ and let $y_1,\dots,y_d\in\R^d$. Let $Y = [y_1 \cdots y_d]\in\R^{d\times d}$.
Then
$$
\conv\{\pm y_1,\dots,\pm y_d\} = Y({B_1^{d}}).
$$
Moreover,
$$
\mathrm{vol}_d({B_1^{d}}) = \frac{2^d}{d!}.
$$
\end{lemma}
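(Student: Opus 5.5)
The identity $\conv\{\pm y_1,\dots,\pm y_d\} = Y(B_1^d)$ will follow from the definition of the absolute convex hull. I will use the description $\conv\{\pm y_i\} = \{\sum_i a_i y_i : \sum_i |a_i|\le 1\}$. To justify it, first note that this set is convex and symmetric and contains each $\pm y_i$. Conversely, any such combination can be rewritten as a convex combination of the points $\pm y_i$ and $0$, where $0 = \frac12 y_1 + \frac12(-y_1)$ lies in the hull. The right-hand side is exactly $\{Ya : \|a\|_1\le 1\} = Y(B_1^d)$, so the two sets coincide. No invertibility of $Y$ is needed, so the identity also covers degenerate families.

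For the volume, I would first use symmetry under coordinate sign changes. This gives $\mathrm{vol}_d(B_1^d) = 2^d\,\mathrm{vol}_d(\Delta_d)$, where $\Delta_d := \{x\in\R^d : x_i\ge 0,\ \sum_i x_i\le 1\}$. The orthants overlap only on coordinate hyperplanes, which have measure zero. Next I would show $\mathrm{vol}_d(\Delta_d) = 1/d!$ by induction on $d$. For $d=1$ the set $\Delta_1$ is $[0,1]$. For the inductive step, apply Fubini in the last coordinate: the slice at height $x_d = u\in[0,1]$ is $(1-u)\Delta_{d-1}$. Hence
$$
\mathrm{vol}_d(\Delta_d) = \int_0^1 (1-u)^{d-1}\,\mathrm{vol}_{d-1}(\Delta_{d-1})\,du = \frac{1}{d}\cdot\frac{1}{(d-1)!}.
$$

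Both parts are routine. The only point to check carefully is the measure-zero overlap in the orthant decomposition, so I do not expect a real obstacle.
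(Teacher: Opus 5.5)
Your proposal is correct and follows essentially the same route as the paper: the $\ell_1$-representation of the absolute convex hull gives $\conv\{\pm y_i\}=Y(B_1^d)$, and $B_1^d$ is decomposed into $2^d$ orthant simplices of volume $1/d!$ each. The only difference is that you verify $\mathrm{vol}_d(\Delta_d)=1/d!$ by a Fubini induction, whereas the paper just cites it via the unimodularity of the edge vectors.
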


\begin{proof}
For the first claim, recall that
$$
\conv\{\pm y_1,\dots,\pm y_d\}
 = 
\Bigl\{\sum_{i = 1}^d a_i y_i: \sum_{i = 1}^d |a_i|\le 1\Bigr\}.
$$
Since $Y a = \sum_{i = 1}^d a_i y_i$, the right-hand side equals $\{Ya: \|a\|_1\le 1\} = Y({B_1^{d}})$.

For the volume formula, note that ${B_1^{d}} = \conv\{\pm e_1,\dots,\pm e_d\}$, where $(e_i)$ is the standard basis.
For each sign vector $\sigma\in\{\pm 1\}^d$ define the simplex
$$
\Delta_\sigma := \conv\{0,\sigma_1 e_1,\dots,\sigma_d e_d\}.
$$
Then ${B_1^{d}} = \bigcup_{\sigma\in\{\pm 1\}^d}\Delta_\sigma$ and the interiors of these simplices are disjoint.
Each $\Delta_\sigma$ has volume $1/d!$ (its edge vectors form a unimodular matrix), hence
$$
\mathrm{vol}_d({B_1^{d}}) = 2^d\cdot \frac{1}{d!}.
$$
\end{proof}

\begin{lemma}[Absorption of a small Minkowski term] \label{lem:absorb}
Let $K,L\subset\R^n$ be origin-symmetric convex bodies and let $\delta\in[0,1)$.
If
$$
K\subset L + \delta K,
$$
then
$$
K\subset \frac{1}{1-\delta} L,
\quad\text{ equivalently}\quad
(1-\delta)K\subset L.
$$
\end{lemma}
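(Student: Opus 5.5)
The natural tool is the Minkowski functional (gauge) of $K$. Since $K$ is an origin-symmetric convex body, its gauge
$$
\|x\|_K := \inf\{\lambda>0: x\in\lambda K\}
$$
is a norm on $\R^n$ with $K=\{x:\|x\|_K\le 1\}$. The same holds for $L$. The plan is to prove $\|x\|_L\le (1-\delta)^{-1}\|x\|_K$ for every $x$, which is exactly the claim $K\subset (1-\delta)^{-1}L$.

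First I will upgrade the hypothesis from $K$ to arbitrary points by homogeneity. If $\|x\|_K=\lambda>0$, then $x/\lambda\in K$, so we can write $x/\lambda=y+\delta z$ with $y\in L$ and $z\in K$. Multiplying by $\lambda$ gives $x=\lambda y+\delta\lambda z$, and hence, by the triangle inequality for the gauge of $L$,
$$
\|x\|_L\le \lambda+\delta\lambda\|z\|_L .
$$
This alone does not close the argument, because it involves $\|z\|_L$ rather than something controlled by $\|x\|_L$.

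The cleaner route is to bound the supremum directly. Set
$$
M:=\sup_{x\in K}\|x\|_L .
$$
This is finite because $L$ is a body, so $L$ contains a ball around the origin, while $K$ is bounded. For $x\in K$, write $x=y+\delta z$ with $y\in L$ and $z\in K$. Then
$$
\|x\|_L\le \|y\|_L+\delta\|z\|_L\le 1+\delta M .
$$
Taking the supremum over $x\in K$ gives $M\le 1+\delta M$. Since $\delta<1$, this rearranges to $M\le (1-\delta)^{-1}$. Consequently every $x\in K$ satisfies $x\in (1-\delta)^{-1}L$, which is the first form of the conclusion. Scaling both sides by $1-\delta>0$ gives the equivalent statement $(1-\delta)K\subset L$.

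The only point needing care is the finiteness of $M$, which is needed to rearrange the inequality $M\le 1+\delta M$. It holds because $L$ has $0$ in its interior: $L$ is an origin-symmetric convex body, so it is symmetric with nonempty interior, and hence $0$ lies in its interior. Symmetry of $K$ is not actually needed, and $\delta=0$ is the trivial case.
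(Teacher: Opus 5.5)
Your proof is correct, but it takes a different route from the paper. The paper uses support functions. Since $h_{L+\delta K}=h_L+\delta h_K$ and $h_K$ is finite ($K$ is compact), the hypothesis gives $(1-\delta)h_K\le h_L$. The inclusion $K\subset(1-\delta)^{-1}L$ then follows from the separation theorem ($S\subset T$ iff $h_S\le h_T$ for compact convex sets).

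You instead work with the gauge $\|\cdot\|_L$ and bootstrap on $M=\sup_{x\in K}\|x\|_L$. This uses only subadditivity and positive homogeneity of the gauge, plus closedness of $L$ to turn $\|x\|_L\le(1-\delta)^{-1}$ into membership in $(1-\delta)^{-1}L$.

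What each buys:
\begin{itemize}
\item Your argument is fully elementary and never invokes Hahn--Banach. The price is that you must know in advance that $0$ lies in the interior of $L$ to make $M$ finite, which you justify correctly from symmetry.
\item The support-function argument linearizes the Minkowski sum in one line. It needs only compactness of $K$ and closed convexity of $L$, with no a priori information about the position of the origin relative to $L$.
\end{itemize}

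In both proofs the key step is the same: finiteness of a quantity ($h_K$ there, $M$ here) is what lets you rearrange $X\le 1+\delta X$-type inequalities. You isolated that point correctly.
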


\begin{proof}
For a nonempty compact convex set $S\subset\R^n$ define its support function
$$
h_S(u) := \sup_{x\in S}\langle u,x\rangle,\quad u\in\R^n.
$$
Then $h_{S + T} = h_S + h_T$ and $h_{\lambda S} = \lambda h_S$ for all $\lambda\ge0$. The inclusion $K\subset L + \delta K$ implies
$$
h_K\le h_{L + \delta K} = h_L + \delta h_K,
$$
hence $(1-\delta)h_K\le h_L$, i.e. $h_K\le \frac{1}{1-\delta}h_L = h_{\frac{1}{1-\delta}L}$.
Finally, for compact convex $S,T$, one has $S\subset T$ iff $h_S\le h_T$ (by Hahn--Banach separation).
Therefore $K\subset \frac{1}{1-\delta}L$.
\end{proof}

\begin{lemma} \label{lem:binombound}
For integers $1\le r\le n$,
$$
\binom{n}{r}\le \Big(\frac{en}{r}\Big)^r.
$$
\end{lemma}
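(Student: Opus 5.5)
The plan is to bound $\binom{n}{r}$ termwise, using $r!\ge (r/e)^r$.

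Write
$$
\binom{n}{r}=\frac{n(n-1)\cdots(n-r+1)}{r!}\le \frac{n^r}{r!},
$$
since each of the $r$ factors in the numerator is at most $n$.

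It then suffices to show $r!\ge (r/e)^r$, which gives $\binom{n}{r}\le n^r e^r/r^r=(en/r)^r$. For this lower bound on $r!$, use the exponential series: since all terms of $e^r=\sum_{j\ge0} r^j/j!$ are nonnegative, keeping only the term $j=r$ gives
$$
e^r\ge \frac{r^r}{r!}.
$$
Rearranging yields $r!\ge r^r e^{-r}$, as needed.

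There is no real obstacle here. The only point needing care is to use the elementary lower bound on $r!$ coming from the exponential series rather than Stirling's formula with error terms. This avoids any dependence on asymptotics and makes the inequality valid for all $1\le r\le n$, including small $r$.
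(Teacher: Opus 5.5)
Your proof is correct and matches the paper's: bound the falling factorial $n(n-1)\cdots(n-r+1)$ by $n^r$, then use $r!\ge (r/e)^r$. The only addition is that you justify $r!\ge (r/e)^r$ by keeping the $j=r$ term of $e^r=\sum_{j\ge 0} r^j/j!$, whereas the paper uses this bound without proof.
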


\begin{proof}
Using $r!\ge (r/e)^r$,
$$
\binom{n}{r} = \frac{n(n-1)\cdots(n-r + 1)}{r!}\le \frac{n^r}{(r/e)^r} = \Big(\frac{en}{r}\Big)^r.
$$
\end{proof}

%%%%%%%%%%%%%%%%%%%%%%%%%%%%%%%%%%%%%%%%%%%
\subsection{Gaussian concentration and tails}

\begin{lemma}[Gaussian concentration for Lipschitz maps] \label{lem:gauss-concentration}
Let $f:\R^n\to\R$ be $1$-Lipschitz with respect to $\|\cdot\|_2$, and let $G\sim N(0,\Id_n)$.
Then for every $t\ge 0$,
$$
\PP\{f(G)\ge \EE f(G) + t\}\le \exp(-t^2/2).
$$
\end{lemma}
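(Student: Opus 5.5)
This is the classical Gaussian concentration inequality (Tsirelson--Ibragimov--Sudakov, or Maurey--Pisier with constant $1/2$). I would prove it by the Gaussian interpolation (Maurey--Pisier) argument combined with an exponential Chernoff bound, and obtain the sharp constant $1/2$ through the heat semigroup/Herbst route.

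\textbf{Reduction.} By a standard mollification, convolving $f$ with a smooth approximate identity preserves the $1$-Lipschitz property and converges uniformly. So we may assume $f$ is smooth with $\|\nabla f\|_2\le 1$ everywhere. Then $\EE f(G)$ is finite, since $|f(x)|\le |f(0)|+\|x\|_2$. Replacing $f$ by $f-\EE f(G)$, we may also assume $\EE f(G)=0$. By Markov's inequality, $\PP\{f(G)\ge t\}\le e^{-\lambda t}\,\EE e^{\lambda f(G)}$ for every $\lambda>0$. It therefore suffices to prove the Laplace bound $\EE e^{\lambda f(G)}\le e^{\lambda^2/2}$ and then optimize at $\lambda=t$.

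\textbf{Laplace bound.} I would use the Ornstein--Uhlenbeck semigroup $P_s h(x)=\EE h(e^{-s}x+\sqrt{1-e^{-2s}}\,G)$, which satisfies the commutation $\nabla P_s f=e^{-s}P_s(\nabla f)$, hence $\|\nabla P_s f\|_2\le e^{-s}$. Set $H(\lambda)=\EE e^{\lambda f(G)}$. The cleanest route is Herbst's argument via the Gaussian log-Sobolev inequality. If I would rather not assume log-Sobolev, I would prove it first from the semigroup: differentiate the entropy along $P_s$ and use Cauchy--Schwarz together with $|\nabla P_s g|\le e^{-s}P_s|\nabla g|$. Applied to $g=e^{\lambda f}$, log-Sobolev gives
$$\lambda H'(\lambda)-H(\lambda)\log H(\lambda)\le \tfrac{\lambda^2}{2}H(\lambda).$$
Setting $K(\lambda)=\lambda^{-1}\log H(\lambda)$, this becomes $K'(\lambda)\le 1/2$. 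Since $K(0^+)=\EE f(G)=0$, integrating gives $\log H(\lambda)\le \lambda^2/2$.

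\textbf{Alternative.} The interpolation route with $G'$ an independent copy and $G_\theta=G\sin\theta+G'\cos\theta$ is simpler but only yields the bound $\exp(-2t^2/\pi^2)$. If the exact constant $1/2$ is not needed downstream, I would fall back to it. Indeed, the paper only uses the inequality with unspecified constants, e.g.\ in Lemma~\ref{lem:gauss-norm-tail}.

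The main obstacle is obtaining the sharp constant $1/2$, which requires log-Sobolev or an equivalent semigroup argument. The remaining steps (smoothing, Chernoff bound) are routine.
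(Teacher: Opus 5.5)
Your proof is correct, but it takes a different route from the paper. The paper does not actually prove the lemma: it only says the statement is a standard consequence of the Gaussian isoperimetric inequality (Borell, Sudakov--Tsirelson) and cites Ledoux and Ledoux--Talagrand.

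\emph{What isoperimetry gives.} Isoperimetry is a statement about sets, and it most directly gives concentration about a median $M_f$, namely $\PP\{f(G)\ge M_f+t\}\le 1-\Phi(t)$. Passing from this to the mean with the exact constant $1/2$ is not an immediate corollary. The crude shift $|\EE f(G)-M_f|\le (2\pi)^{-1/2}$ costs a factor of order $e^{t/\sqrt{2\pi}}$; the mean form with constant $1/2$ is the Tsirelson--Ibragimov--Sudakov inequality.

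\emph{What your route gives.} Your functional argument (smoothing, a Chernoff bound, and $\EE e^{\lambda f(G)}\le e^{\lambda^2/2}$ via log-Sobolev and Herbst's inequality $K'\le 1/2$) lands exactly on the mean with constant $1/2$. It is self-contained once log-Sobolev is derived from the Ornstein--Uhlenbeck semigroup, as you sketch. What it gives up is the sharper isoperimetric profile about the median.

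\emph{A shortcut.} You can skip log-Sobolev entirely. Combine the commutation $\nabla P_s f=e^{-s}P_s\nabla f$ that you already wrote with the covariance identity $\EE[f(G)g(G)]=\int_0^\infty\EE\langle\nabla P_sf(G),\nabla g(G)\rangle\,ds$, valid when $\EE f(G)=0$. Applying this to $g=e^{\lambda f}$ gives $H'(\lambda)\le\lambda H(\lambda)$, hence $\log H(\lambda)\le\lambda^2/2$.

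As you note, the Maurey--Pisier constant $2/\pi^2$ would also suffice for every use of the lemma in the paper, since Lemma~\ref{lem:gauss-norm-tail} is stated with an unspecified constant $c$.
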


\begin{proof}
This is a standard consequence of the Gaussian isoperimetric inequality (Borell's inequality).
See, e.g., \cite[Chapter~2]{Ledoux2001} or \cite{LedouxTalagrand1991}.
\end{proof}

\begin{lemma}[Gaussian norm tail] \label{lem:gauss-norm-tail}
There exist universal constants $c,C>0$ such that for all $n\in\mathbb N$, all $t\ge 0$, and $G\sim N(0,\Id_n)$,
$$
\PP\{\|G\|_2\ge \sqrt{n} + t\} \le \exp(-c t^2),
\quad
\PP\{\|G\|_2\ge C\sqrt{n + t}\} \le \exp(-c t).
$$
\end{lemma}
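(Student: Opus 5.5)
The plan is to derive both bounds from Gaussian concentration (Lemma~\ref{lem:gauss-concentration}) applied to the $1$-Lipschitz function $f(x)=\|x\|_2$. First I would record $\EE\|G\|_2\le (\EE\|G\|_2^2)^{1/2}=\sqrt n$ by Jensen (Cauchy--Schwarz). Then Lemma~\ref{lem:gauss-concentration} gives, for every $t\ge 0$,
$$
\PP\{\|G\|_2\ge \sqrt n+t\}\le \PP\{\|G\|_2\ge \EE\|G\|_2+t\}\le \exp(-t^2/2),
$$
which is the first inequality with $c=1/2$.

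For the second inequality I would reduce to the first. Take $C=2$, say. Then I would check that $2\sqrt{n+t}\ge \sqrt n+\sqrt t+\sqrt{n+t}\ge \sqrt n+\sqrt t$. This holds because $\sqrt n+\sqrt t\le \sqrt{2(n+t)}\le 2\sqrt{n+t}$. With this in hand, the first bound applied with $\sqrt t$ in place of $t$ gives
$$
\PP\{\|G\|_2\ge 2\sqrt{n+t}\}\le \PP\{\|G\|_2\ge \sqrt n+\sqrt t\}\le \exp(-t/2).
$$
So both statements hold with $c=1/2$ and $C=2$.

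The only ingredient that is not pure algebra is the concentration lemma, which is already stated earlier. I therefore do not expect any real obstacle. The one point I would take care over is the direction of the mean bound: we need $\EE\|G\|_2\le\sqrt n$, an upper bound, so that the deviation threshold $\sqrt n+t$ lies above $\EE\|G\|_2+t$. Jensen gives exactly this.
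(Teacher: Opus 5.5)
Your argument is correct. For the first inequality it coincides with the paper's: Lipschitz concentration for $\|\cdot\|_2$ plus $\EE\|G\|_2\le\sqrt n$.

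For the second inequality you take a different and shorter route. You obtain it as a corollary of the first by substituting $\sqrt t$ for $t$ and using $\sqrt n+\sqrt t\le\sqrt{2(n+t)}\le 2\sqrt{n+t}$, which gives the rate $\exp(-t/2)$ with $C=2$. The paper instead proves a separate Laurent--Massart-type chi-square bound, $\PP\{\|G\|_2^2-n\ge 2\sqrt{nt}+2t\}\le e^{-t}$, by an explicit moment generating function and Chernoff computation. It then converts this using $(\sqrt n+\sqrt{2t})^2-n\ge 2\sqrt{nt}+2t$ and $\sqrt n+\sqrt{2t}\le 2\sqrt{n+t}$.

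The paper's route gives a slightly better rate ($e^{-t}$ rather than $e^{-t/2}$). It also makes the second bound independent of the Gaussian isoperimetric inequality, which the paper only cites. Your route shows the second bound is redundant given the first. Since the lemma only asks for some universal constants, your choice $c=1/2$, $C=2$ works for both inequalities.

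One slip to fix: the chain $2\sqrt{n+t}\ge\sqrt n+\sqrt t+\sqrt{n+t}$ you wrote is false, since it would require $\sqrt{n+t}\ge\sqrt n+\sqrt t$. Drop the middle term. The justification you give proves exactly $2\sqrt{n+t}\ge\sqrt n+\sqrt t$, which is the inequality you actually use.
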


\begin{proof} ~

\subsubsection*{First inequality.}

The function $f:\mathbb R^n\to\mathbb R$, $f(x) = \|x\|_2$, is $1$-Lipschitz, hence by Lemma~\ref{lem:gauss-concentration},
for all $t\ge 0$,
$$
\PP\{f(G)\ge \EE f(G) + t\}\le \exp(-t^2/2).
$$
Moreover, by Cauchy--Schwarz, $\EE\|G\|_2\le \sqrt{\EE\|G\|_2^2} = \sqrt{n}$. Therefore,
$$
\PP\{\|G\|_2\ge \sqrt{n} + t\}
\le \PP\{\|G\|_2\ge \EE\|G\|_2 + t\}
\le \exp(-t^2/2),
$$
which gives the first inequality.

\subsubsection*{Second inequality.}

Let $X := \|G\|_2^2$ so that $X\sim \chi^2_n$. We use the standard upper tail estimate:

\begin{claim}[Chi-square upper tail] \label{clm:chi-square-tail}
For all $t\ge 0$,
$$
\PP\{X-n\ge 2\sqrt{nt} + 2t\}\le \exp(-t).
$$
\end{claim}

\begin{proof}[Proof of Claim~\ref{clm:chi-square-tail}]
Write $X = \sum_{i = 1}^n Z_i^2$ with $Z_i\sim N(0,1)$ i.i.d. For $\lambda\in(0,1/2)$,
$$
\EE e^{\lambda Z_1^2} = (1-2\lambda)^{-1/2},
\quad\Rightarrow\quad
\EE e^{\lambda X} = (1-2\lambda)^{-n/2}.
$$
Hence
$$
\log \EE e^{\lambda (X-n)}
 = -\frac{n}{2}\log(1-2\lambda) - \lambda n.
$$
Using $-\log(1-u)\le u + \frac{u^2}{2(1-u)}$ for $u\in(0,1)$ and substituting $u = 2\lambda$ gives
$$
\log \EE e^{\lambda (X-n)}
\le \frac{n\lambda^2}{1-2\lambda},
\quad 0<\lambda<\frac12.
$$
By Chernoff's bound, for any $y\ge 0$,
$$
\PP\{X-n\ge y\}
\le \exp\Bigl(-\lambda y + \frac{n\lambda^2}{1-2\lambda}\Bigr),\quad 0<\lambda<\frac12.
$$
Fix $t\ge 0$ and set $y := 2\sqrt{nt} + 2t$. Let $\alpha := \sqrt{t/n}$ and choose $\lambda := \frac{\alpha}{1 + 2\alpha}\in(0,1/2)$.
Then $1-2\lambda = \frac{1}{1 + 2\alpha}$, $\frac{n\lambda^2}{1-2\lambda} = \frac{t}{1 + 2\alpha}$, and
$$
\lambda y
 = \frac{\alpha}{1 + 2\alpha}\cdot 2n\alpha(1 + \alpha)
 = \frac{2t(1 + \alpha)}{1 + 2\alpha}.
$$
Thus $-\lambda y + \frac{n\lambda^2}{1-2\lambda} = -t$, proving the claim.
\end{proof}

Note that for all $t\ge 0$,
$$
(\sqrt{n} + \sqrt{2t})^2 - n
 = 2t + 2\sqrt{2nt}
\ge 2t + 2\sqrt{nt}.
$$
Hence $\{\|G\|_2\ge \sqrt{n} + \sqrt{2t}\}\subset \{X-n\ge 2\sqrt{nt} + 2t\}$ and by the claim,
$$
\PP\{\|G\|_2\ge \sqrt{n} + \sqrt{2t}\}\le e^{-t}.
$$
Finally, $\sqrt{n} + \sqrt{2t}\le 2\sqrt{n + t}$ since $(\sqrt{n} + \sqrt{2t})^2\le 4(n + t)$, so
$$
\PP\{\|G\|_2\ge 2\sqrt{n + t}\}\le e^{-t}.
$$
Absorbing constants yields the stated second inequality.
\end{proof}

%%%%%%%%%%%%%%%%%%%%%%%%%%%%%%%%%%%%%%%%%%%
\subsection{$\ell_1$-sparsity via linear programming}

The next lemma is a standard linear-programming fact: an $\ell_1$-minimizer can be chosen sparse, with linearly independent support columns.
We include a complete proof for completeness.

\begin{lemma} \label{lem:l1-sparse-existence}
Let $M\in\R^{p\times N}$ and set $d := \rank(M)$. Fix $y\in M(B_1^N)$ and define
$$
\tau := \min\{\|x\|_1: Mx = y\}.
$$
Then $\tau\le 1$, and there exists a minimizer $x^\star\in\R^N$ such that
$$
Mx^\star = y,\quad \|x^\star\|_1 = \tau,\quad |\supp(x^\star)|\le d,
$$
and moreover the columns $\{Me_i: i\in\supp(x^\star)\}$ are linearly independent, i.e.
$$
\rank(M_{\supp(x^\star)}) = |\supp(x^\star)|.
$$
\end{lemma}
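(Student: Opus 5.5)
The plan is to treat this as a linear program and use the standard extreme-point argument.

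First, $\tau\le 1$. Since $y\in M(B_1^N)$, there is some $x$ with $\|x\|_1\le 1$ and $Mx=y$.

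Second, the minimum is attained. The feasible set $\{x: Mx=y\}$ is a nonempty closed affine subspace. The $\ell_1$ norm is continuous and coercive, so the infimum over the compact set $\{x: Mx=y,\ \|x\|_1\le 1\}$ is achieved.

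Next, among all minimizers I choose $x^\star$ with $|\supp(x^\star)|$ as small as possible. Write $S:=\supp(x^\star)$. The key claim is that the columns $\{Me_i: i\in S\}$ are linearly independent. Once that is shown, $|S|=\rank(M_S)\le \rank(M)=d$ follows at once.

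The claim is proved by contradiction. Suppose there is a nonzero $z$ with $\supp(z)\subset S$ and $Mz=0$. For real $\lambda$ consider $x_\lambda:=x^\star+\lambda z$; it is feasible for every $\lambda$.

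For $|\lambda|$ smaller than $\min_{i\in S}|x^\star_i|/\|z\|_\infty$, no coordinate in $S$ changes sign. On that interval
$$\|x_\lambda\|_1=\sum_{i\in S}\sgn(x^\star_i)(x^\star_i+\lambda z_i)=\tau+\lambda\sum_{i\in S}\sgn(x^\star_i)z_i,$$
which is affine in $\lambda$. Since $x^\star$ minimizes, this affine function has a minimum at the interior point $\lambda=0$. Hence its slope $\sum_{i\in S}\sgn(x^\star_i)z_i$ vanishes, and $\|x_\lambda\|_1=\tau$ throughout the interval.

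Now move $\lambda$ away from $0$, in a suitable direction, until the first coordinate of $x_\lambda$ in $S$ hits zero. Call this value $\lambda_0$. It exists because $z\ne0$ is supported in $S$: take $\lambda_0=\pm\min\{|x^\star_i/z_i|: z_i\ne0\}$ with the appropriate sign.

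Up to and including $\lambda_0$, no coordinate crosses zero, so the affine formula stays valid by continuity. Therefore $\|x_{\lambda_0}\|_1=\tau$. The vector $x_{\lambda_0}$ is a minimizer whose support is strictly contained in $S$, contradicting the minimality of $|S|$.

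The only delicate point is this sign-preservation and continuity bookkeeping at the endpoint $\lambda_0$, which ensures the $\ell_1$ norm does not change there. Everything else is routine.
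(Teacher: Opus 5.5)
Your proof is correct, and it takes a somewhat different route from the paper. The paper lifts to the linear program $\min\{\mathbf 1^\top z : [M\ \ -M]z = y,\ z\ge 0\}$ and takes an extreme point $z^\star=(u^\star,v^\star)$ of the face of minimizers. It then uses the perturbation $z^\star\pm\eta h$ with $h$ in the kernel to show that the columns of $[M\ \ -M]$ on $\supp(z^\star)$ are independent. A separate no-cancellation step ($u^\star_i v^\star_i=0$) is needed to pass back to $x^\star=u^\star-v^\star$ with $\|x^\star\|_1=\tau$. You instead stay in $\R^N$, choose a minimizer of minimal support size, and run the same kernel-direction perturbation directly on $x^\star$.

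Each route pays for a different thing. The lift makes the objective linear, so no sign bookkeeping is needed along $z^\star\pm\eta h$. The price is relying on the existence of an extreme point of the compact face of minimizers, which the paper uses without comment, together with the no-cancellation argument. Your route needs no polyhedral facts, since a minimizer of minimal support exists trivially. The price is handling the piecewise-linear $\ell_1$ norm, and you do this correctly. The zero-slope observation at the interior point $\lambda=0$ makes the affine function $\lambda\mapsto\sum_{i\in S}\sgn(x^\star_i)(x^\star_i+\lambda z_i)$ constant. Sign preservation up to the first zero crossing $\lambda_0$ ensures it still equals $\|x_{\lambda_0}\|_1$ there.
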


\begin{proof}
Since $y\in M(B_1^N)$, there exists $x^{(0)}$ with $Mx^{(0)} = y$ and $\|x^{(0)}\|_1\le 1$, hence $\tau\le 1$.

Consider the polyhedron
$$
\mathcal P := \{z\in\R^{2N}: Az = y, z\ge 0\},
\quad
A := [M -M]\in\R^{p\times 2N}.
$$
Write $x = u-v$ with $u,v\ge 0$ and $z = (u,v)$. Then $Az = y$ and $\mathbf 1^\top z = \|u\|_1 + \|v\|_1$.
The feasible set $\mathcal P$ is nonempty because $x^{(0)}$ is feasible.
Moreover, the objective $\mathbf 1^\top z$ is minimized on $\mathcal P$:
indeed, the sublevel set $\{z\in\mathcal P:\mathbf 1^\top z\le \mathbf 1^\top z^{(0)}\}$ is closed,
and bounded since $z\ge 0$ and $\sum_i z_i\le \mathbf 1^\top z^{(0)}$ implies $0\le z_i\le \mathbf 1^\top z^{(0)}$ for all $i$.
Hence this sublevel set is compact and the minimum is attained.

Let $\mathcal F := \{z\in\mathcal P: \mathbf 1^\top z = \tau\}$ be the nonempty face of minimizers and fix an extreme point $z^\star\in\mathcal F$.
Write $z^\star = (u^\star,v^\star)$ and set $x^\star := u^\star-v^\star$. Then $Mx^\star = y$ and $\mathbf 1^\top z^\star = \tau$.

\subsubsection*{No cancellations.}

If $u^\star_i>0$ and $v^\star_i>0$ for some $i$, let $t := \min\{u^\star_i,v^\star_i\}>0$ and define
$u' := u^\star-te_i$, $v' := v^\star-te_i$, $z' := (u',v')$.
Then $z'\ge 0$, $Az' = Az^\star = y$, and $\mathbf 1^\top z' = \mathbf 1^\top z^\star-2t<\tau$, contradicting minimality.
Hence $u^\star_i v^\star_i = 0$ for all $i$, and therefore
$$
\|x^\star\|_1 = \|u^\star\|_1 + \|v^\star\|_1 = \mathbf 1^\top z^\star = \tau.
$$

\subsubsection*{Support size and independence.}

Let $J := \supp(z^\star) = \{k: z^\star_k>0\}\subset[2N]$.
We claim that the columns of $A$ indexed by $J$ are linearly independent.
Indeed, if they were dependent then there exists $0\neq h\in\R^{2N}$ supported on $J$ with $Ah = 0$.
Choose
$$
0<\eta<\min_{k\in J, h_k\neq 0}\frac{z^\star_k}{|h_k|}.
$$
Then $z^\star\pm \eta h\ge 0$ and $A(z^\star\pm \eta h) = Az^\star = y$, so $z^\star\pm \eta h\in\mathcal P$ are distinct and
$$
z^\star = \frac12(z^\star + \eta h) + \frac12(z^\star-\eta h),
$$
contradicting extremality. Thus $A_J$ has full column rank, and hence
$$
|J|\le \rank(A) = \rank(M) = d.
$$

By the no-cancellation property, at most one of $i$ and $N + i$ belongs to $J$ for each $i\in[N]$.
Let $I := \supp(x^\star)\subset[N]$. Then $|I| = |J|$ and the multiset of columns $\{Ae_k: k\in J\}$ equals $\{\pm Me_i: i\in I\}$.
Multiplying columns by $-1$ does not affect linear independence, so $\rank(M_I) = |I|$.
Finally $|I| = |J|\le d$.
\end{proof}

\begin{corollary} \label{cor:l1-decomp}
Let $M\in\R^{p\times N}$ have rank $d := \rank(M)$. Then
$$
M(B_1^N)
\subset
\bigcup_{\substack{I\subset[N], |I|\le d, \rank(M_I) = |I|}}
\conv\{\pm Me_i: i\in I\}.
$$
\end{corollary}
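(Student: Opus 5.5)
The inclusion will follow directly from Lemma~\ref{lem:l1-sparse-existence}, applied pointwise. Fix $y\in M(B_1^N)$. The lemma, applied to $M$ and this $y$, produces a minimizer $x^\star$ with four properties: $Mx^\star=y$, $\|x^\star\|_1=\tau\le 1$, $I:=\supp(x^\star)$ satisfies $|I|\le d$, and $\rank(M_I)=|I|$. So $I$ is one of the index sets appearing in the union, and it remains to check that $y$ lies in the corresponding absolute convex hull.

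For that, write
$$
y=Mx^\star=\sum_{i\in I}x^\star_i\,Me_i,\qquad \sum_{i\in I}|x^\star_i|=\tau\le 1.
$$
By the description of the absolute convex hull as $\{\sum_{i\in I}a_iMe_i:\sum_{i\in I}|a_i|\le 1\}$ (as in Lemma~\ref{lem:crosspoly-linear-volume}), this gives $y\in\conv\{\pm Me_i: i\in I\}$. Since $y$ was arbitrary, the inclusion follows.

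One degenerate case needs a convention. If $y=0$, then $x^\star=0$ and $I=\emptyset$. We interpret $\conv\{\pm Me_i:i\in\emptyset\}$ as $\{0\}$, with $\rank(M_\emptyset)=0=|\emptyset|$. If one prefers nonempty index sets, note that $0$ lies in every such hull: when $d\ge 1$, take any single index $i$ with $Me_i\neq 0$, so $\{i\}$ is admissible. When $d=0$, $M=0$ and $M(B_1^N)=\{0\}$, and this is covered by the empty-set convention.

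No step here is a real obstacle, since all the work is done in Lemma~\ref{lem:l1-sparse-existence}. The only care needed is with this empty-support convention.
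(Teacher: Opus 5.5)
Your proposal is correct and is essentially the paper's own proof. You apply Lemma~\ref{lem:l1-sparse-existence} to each $y\in M(B_1^N)$, take $I=\supp(x^\star)$, and read off $y\in\conv\{\pm Me_i: i\in I\}$ from $\sum_{i\in I}|x^\star_i|\le 1$. Your handling of the empty support when $y=0$ is a small extra point of care that the paper leaves implicit.
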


\begin{proof}
Fix $y\in M(B_1^N)$. By Lemma~\ref{lem:l1-sparse-existence}, there exists $x^\star$ with $Mx^\star = y$, $\|x^\star\|_1\le 1$,
and $I := \supp(x^\star)$ satisfies $|I|\le d$ and $\rank(M_I) = |I|$.
Write $y = \sum_{i\in I} x^\star_i Me_i$ with $\sum_{i\in I}|x^\star_i| = \|x^\star\|_1\le 1$; hence
$y\in \conv\{\pm Me_i: i\in I\}$.
\end{proof}

%%%%%%%%%%%%%%%%%%%%%%%%%%%%%%%%%%%%%%%%%%%
\subsection{Elementary Gaussian-volume bounds}

\begin{lemma}[Gaussian measure via volume] \label{lem:gauss-volume}
For any measurable $S\subset\R^d$,
$$
\gamma_d(S)\le (2\pi)^{-d/2} \mathrm{vol}_d(S).
$$
\end{lemma}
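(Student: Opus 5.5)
The bound follows by comparing the Gaussian density with Lebesgue measure. The standard Gaussian measure on $\R^d$ has density
$$
\varphi_d(x) = (2\pi)^{-d/2}\exp\bigl(-\|x\|_2^2/2\bigr)
$$
with respect to $\mathrm{vol}_d$. The plan is to bound $\varphi_d$ pointwise by its value at the origin and then integrate.

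Since $\exp(-\|x\|_2^2/2)\le 1$ for every $x\in\R^d$, we have $\varphi_d(x)\le (2\pi)^{-d/2}$ everywhere. For measurable $S$, integrating this inequality over $S$ gives
$$
\gamma_d(S) = \int_S \varphi_d(x)\,dx \le (2\pi)^{-d/2}\int_S dx = (2\pi)^{-d/2}\,\mathrm{vol}_d(S).
$$
If $\mathrm{vol}_d(S)=\infty$, the inequality is trivial.

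There is no real obstacle here. The only point to keep in mind is that the lemma is applied later in subspaces $H\simeq\R^d$, via isometric identification. This is harmless because both $\gamma_H$ and $\mathrm{vol}_H$ are invariant under linear isometries.
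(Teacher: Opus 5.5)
Your proof is correct and is the same as the paper's: both bound the Gaussian density pointwise by $(2\pi)^{-d/2}$ and integrate over $S$. Your remarks on the case $\mathrm{vol}_d(S)=\infty$ and on isometry invariance are correct and harmless.
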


\begin{proof}
The Gaussian density on $\R^d$ satisfies $(2\pi)^{-d/2}e^{-\|x\|_2^2/2}\le (2\pi)^{-d/2}$ for all $x$.
Integrating over $S$ yields the claim.
\end{proof}

\begin{lemma}[Measure of a cross-polytope] \label{lem:crosspoly-measure}
Let $y_1,\dots,y_d\in\R^d$ and set $R := \max_{i\le d}\|y_i\|_2$. Then for all $\rho\ge 1$,
$$
\gamma_d\Big(\rho \conv\{\pm y_1,\dots,\pm y_d\}\Big)
\le \Big(\frac{C\rho R}{d}\Big)^d.
$$
\end{lemma}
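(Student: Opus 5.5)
The plan is to reduce to a volume computation, exactly as in the proof of Lemma~\ref{lem:det-shrink-clean}. Set $Y=[y_1\ \cdots\ y_d]\in\R^{d\times d}$. If the $y_i$ are linearly dependent, then $\conv\{\pm y_i\}$ lies in a proper subspace, so its Gaussian measure is $0$ and there is nothing to prove. Otherwise Lemma~\ref{lem:crosspoly-linear-volume} gives $\rho\conv\{\pm y_1,\dots,\pm y_d\}=\rho Y(B_1^d)$.

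From this representation the volume is
$$
\mathrm{vol}_d\bigl(\rho Y(B_1^d)\bigr)=\rho^d|\det Y|\,\frac{2^d}{d!}.
$$
The only real estimate is Hadamard's inequality,
$$
|\det Y|\le\prod_{i=1}^d\|y_i\|_2\le R^d.
$$
Alternatively, one can read off the same bound from Gram--Schmidt: each diagonal entry $a_{ii}=\dist(y_i,\spann\{y_j:j<i\})\le\|y_i\|_2\le R$.

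Next I apply Lemma~\ref{lem:gauss-volume}:
$$
\gamma_d\le(2\pi)^{-d/2}\,\rho^d R^d\,\frac{2^d}{d!}.
$$
Using $d!\ge(d/e)^d$ (as in Lemma~\ref{lem:binombound}), this is at most
$$
\Bigl(\frac{2e}{\sqrt{2\pi}}\cdot\frac{\rho R}{d}\Bigr)^d,
$$
which is the claim with $C=2e/\sqrt{2\pi}$.

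No step is a genuine obstacle. The only points that need care are the degenerate case and Hadamard's bound; the hypothesis $\rho\ge1$ is not actually needed.
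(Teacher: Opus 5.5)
Your proposal is correct and matches the paper's proof essentially step for step. The paper also writes the polytope as $\rho Y(B_1^d)$, bounds $|\det Y|\le R^d$ by Hadamard, uses $\mathrm{vol}_d(B_1^d)=2^d/d!\le(2e/d)^d$ together with the pointwise bound on the Gaussian density (Lemma~\ref{lem:gauss-volume}), and disposes of the degenerate case as trivial. Your explicit constant $C=2e/\sqrt{2\pi}$ and your remark that $\rho\ge 1$ is not needed (the bound is homogeneous in $\rho$) are both accurate.
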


\begin{proof}
Let $Y = [y_1 \cdots y_d]\in\R^{d\times d}$. Then $\conv\{\pm y_i\} = Y({B_1^{d}})$ and hence
$$
\mathrm{vol}_d\big(\rho Y({B_1^{d}})\big) = \rho^d|\det Y| \mathrm{vol}_d({B_1^{d}}).
$$
By Hadamard's inequality, $|\det Y|\le \prod_{i = 1}^d\|y_i\|_2\le R^d$.
Also $\mathrm{vol}_d({B_1^{d}}) = 2^d/d!\le (2e/d)^d$.
Applying Lemma~\ref{lem:gauss-volume} gives
$$
\gamma_d\big(\rho Y({B_1^{d}})\big)\le (2\pi)^{-d/2}\rho^d R^d \frac{2^d}{d!}\le \Big(\frac{C\rho R}{d}\Big)^d,
$$
after absorbing constants. (If $\det Y = 0$ then the left-hand side is $0$ and the inequality is trivial.)
\end{proof}

%%%%%%%%%%%%%%%%%%%%%%%%%%%%%%%%%%%%%%%%%%%
\subsection{Projection facts}

\begin{lemma}[Projection monotonicity] \label{lem:proj-monotone}
Let $H\subset\R^n$ be a linear subspace and $\pi_H$ the orthogonal projection.
Then for every measurable set $S\subset\R^n$,
$$
\gamma_n(S)\le \gamma_H(\pi_H S).
$$
\end{lemma}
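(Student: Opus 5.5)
The plan is to write $G\sim N(0,\Id_n)$ and to realize $\pi_H G$ as a standard Gaussian vector in $H$. This is exactly the definition of $\gamma_H$ adopted in the quotient bookkeeping subsection, namely $\gamma_H=\mathcal L(\pi_H G)$. With that identification, the inequality reduces to a pointwise inclusion of events followed by taking probabilities.

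The key step is the inclusion
$$
\{G\in S\}\subset\{\pi_H G\in \pi_H S\},
$$
which holds trivially: if $G=x$ for some $x\in S$, then $\pi_H G=\pi_H x\in\pi_H S$. Taking probabilities then gives
$$
\gamma_n(S)=\PP\{G\in S\}\le \PP\{\pi_H G\in\pi_H S\}=\gamma_H(\pi_H S).
$$

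The only point needing care is measurability of $\pi_H S$. The image of a Borel set under a linear map is analytic, hence universally measurable, so the right-hand side is well defined after completing $\gamma_H$. Alternatively, $\PP\{\pi_H G\in\pi_H S\}$ can be interpreted as an outer probability, and the inequality survives unchanged. In every application in the paper $S$ is a finite union of compact polytopes, possibly thickened by a ball, so $\pi_H S$ is compact and the issue does not arise.

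No further obstacle is expected. I would include one sentence noting that $\pi_H G$ is indeed standard Gaussian on $H$: its covariance is $\pi_H\pi_H^\top=\pi_H$, which is the identity on $H$. This agrees with the usage of $\gamma_H$ in Lemma~\ref{lem:det-shrink-clean} and Lemma~\ref{lem:dim-quotient}.
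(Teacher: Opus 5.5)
Your proof is correct and is essentially the paper's argument: the paper also combines the inclusion $\{G\in S\}\subset\{\pi_H G\in\pi_H S\}$ with $\gamma_H=\mathcal L(\pi_H G)$ and then takes probabilities. Your remark on the measurability of $\pi_H S$ (analytic sets, or reading the right-hand side as an outer probability) is a sensible addition that the paper omits, and as you note it is harmless in every application in the paper, since there the relevant sets are compact.
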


\begin{proof}
Let $G\sim N(0,\Id_n)$. Then $\pi_H G$ has law $\gamma_H$ by definition.
Moreover, $\{G\in S\}\subset\{\pi_H G\in \pi_H S\}$.
Taking probabilities gives
$$
\gamma_n(S) = \PP\{G\in S\}\le \PP\{\pi_H G\in \pi_H S\} = \gamma_H(\pi_H S).
$$
\end{proof}

\begin{lemma}[Projection of absolute convex hull] \label{lem:proj-absconv}
Let $H$ be a Hilbert space, $\pi:H\to H$ a linear map, and $a_1,\dots,a_N\in H$.
Then
$$
\pi \left(\conv\{\pm a_1,\dots,\pm a_N\}\right)
 = 
\conv\{\pm \pi(a_1),\dots,\pm \pi(a_N)\}.
$$
\end{lemma}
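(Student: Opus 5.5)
We prove the two inclusions separately, working directly from the description of the absolute convex hull as the set of $\ell_1$-bounded combinations:
$$
\conv\{\pm a_1,\dots,\pm a_N\}=\Bigl\{\sum_{i=1}^N \lambda_i a_i:\ \sum_{i=1}^N|\lambda_i|\le 1\Bigr\}.
$$
This description holds because the set on the right is convex, is symmetric, contains every $\pm a_i$, and consists of convex combinations of the points $\pm a_i$ and $0$. Here $0$ itself lies in $\conv\{\pm a_1\}$ when $N\ge 1$; if $N=0$, both sides of the lemma are $\{0\}$ under the usual convention.

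For the inclusion ``$\subset$'', take a point $x=\sum_i\lambda_i a_i$ with $\sum_i|\lambda_i|\le 1$. Linearity of $\pi$ gives
$$
\pi(x)=\sum_i\lambda_i\,\pi(a_i),
$$
and this is an element of $\conv\{\pm\pi(a_i)\}$ with the same coefficient vector. For the reverse inclusion ``$\supset$'', take $y=\sum_i\lambda_i\pi(a_i)$ with $\sum_i|\lambda_i|\le1$. Then $y=\pi\bigl(\sum_i\lambda_i a_i\bigr)$, and the point $\sum_i\lambda_i a_i$ lies in $\conv\{\pm a_i\}$, so $y$ lies in the image under $\pi$.

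No real obstacle is expected. The only point that needs care is to justify the $\ell_1$-combination description of the absolute convex hull, including the role of $0$ and the sign absorption $\lambda_i a_i=|\lambda_i|(\sgn(\lambda_i)a_i)$, which is done by the short argument in the first paragraph. Neither the Hilbert structure nor orthogonality of $\pi$ is used; the statement holds for any linear map.
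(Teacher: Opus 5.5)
Your proof is correct and matches the paper's argument: both write the absolute convex hull as the set of $\ell_1$-bounded combinations $\sum_i \lambda_i a_i$ and note that linearity of $\pi$ carries such a combination to $\sum_i \lambda_i \pi(a_i)$ with the same coefficients, which gives both inclusions. Your extra justification of that representation (including the role of $0$) and your remark that only linearity of $\pi$ is used are accurate additions.
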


\begin{proof}
Recall the standard representation
$$
\conv\{\pm a_1,\dots,\pm a_N\}
 = 
\Bigl\{\sum_{i = 1}^N \alpha_i a_i: \sum_{i = 1}^N |\alpha_i|\le 1\Bigr\}.
$$
Applying the linear map $\pi$ and using linearity gives
$$
\pi \left(\sum_{i = 1}^N \alpha_i a_i\right)
 = 
\sum_{i = 1}^N \alpha_i \pi(a_i),
$$
with the same $\ell_1$ constraint on $(\alpha_i)$. Hence the image is exactly
$$
\Bigl\{\sum_{i = 1}^N \alpha_i \pi(a_i): \sum_{i = 1}^N|\alpha_i|\le 1\Bigr\}
 = 
\conv\{\pm \pi(a_1),\dots,\pm \pi(a_N)\}.
$$
\end{proof}

%%%%%%%%%%%%%%%%%%%%%%%%%%%%%%%%%%%%%%%%%%%
%%%%%%%%%%%%%%%%%%%%%%%%%%%%%%%%%%%%%%%%%%%
\section{Uniform suppression for the small-$U$ regime} \label{app:suppr-proof}

This appendix contains the probabilistic uniformization step in the small-$U$ regime, i.e. the proof of Lemma~\ref{lem:EK-prob}. The argument is a counting-and-union-bound scheme, but it is simpler than the ``all permutations'' uniform tilt step of \cite{Tikhomirov2019}: it uses only the block-tail estimate from Lemma~\ref{lem:block-tail-distances} and unions over subsets $J$ of a fixed cardinality.

%%%%%%%%%%%%%%%%%%%%%%%%%%%%%%%%%%%%%%%%%%%
\subsection{Enumerating possible $K$--coefficient submatrices}

Fix $p\in\{n-\tilde s,\dots,n\}$.
Let $\mathcal T_p$ be the collection of all $m\times p$ matrices $B$ which can appear as a submatrix
consisting of $p$ columns chosen from $F_1(A) = A^{(K)}$ for some $A\in\mathcal A_\varepsilon$.

Each such column $b\in\R^m$ satisfies
$$
\|b\|_1\le 1,\quad b\in(\varepsilon\Z)^m,\quad |\supp(b)|\le s.
$$
Indeed, $b$ is a $K$--part of a column of $A$, hence $b\in(\varepsilon\Z)^m$ and $\|b\|_1\le 1$; moreover,
if $b_j\neq 0$ then $|b_j|\ge 1/s$ by definition of the $K$--part, so $|\supp(b)|\le s$.
In particular, $\|b\|_2\le \|b\|_1\le 1$.

We claim that the number of possible such columns is at most $\big(\frac{C m}{s\varepsilon}\big)^s$.
To see this, first choose a support $J\subset[m]$ of size $r\le s$; the number of choices is
$$
\sum_{r = 0}^s \binom{m}{r}\le \Big(\frac{C m}{s}\Big)^s,
$$
and for a fixed support, each supported coordinate lies in $\varepsilon\Z\cap[-1,1]$, hence has at most $C/\varepsilon$ possibilities,
so there are at most $(C/\varepsilon)^s$ choices for the values. Multiplying yields the claim.

Therefore, since $B$ has $p$ columns,
\begin{equation} \label{eq:Tp-count-smallU-new}
|\mathcal T_p|\le \Big(\frac{C m}{s\varepsilon}\Big)^{sp}.
\end{equation}

%%%%%%%%%%%%%%%%%%%%%%%%%%%%%%%%%%%%%%%%%%%
\subsection{Proof of Lemma~\ref{lem:EK-prob}}

\begin{proof}[Proof of Lemma~\ref{lem:EK-prob}]
Fix the absolute constant $C_\star$ in Definition~\ref{def:EA1}. Choose $\tau_0\ge 1$ such that $\tau_0\sqrt{5/4}\le C_\star$ and $\tau_0$ is large enough for Lemma~\ref{lem:block-tail-distances} (with its constant $C$). Such a choice is possible by taking $C_\star$ sufficiently large once and for all.

Fix $p\in\{n-\tilde s,\dots,n\}$ and $B\in\mathcal T_p$ with $\rank(B) = p$.
For each $J\subset[p]$ with $|J| = r$, apply Lemma~\ref{lem:block-tail-distances} with $\tau = \tau_0$.
With $N = n-p + r$, we have $N\ge r$ and also $N\le (n-(n-\tilde s)) + r = \tilde s + r\le \frac54 r$ by \eqref{eq:EK-conds}.
Hence the conclusion of Lemma~\ref{lem:block-tail-distances} implies:
on the event $\mathcal E_{B,J}$,
at least $r/2$ of the last $r$ Gram--Schmidt distances are $\le \tau_0\sqrt N\le \tau_0\sqrt{5r/4}\le C_\star\sqrt r$.
Thus $\bigcap_{J:|J| = r}\mathcal E_{B,J}$ implies precisely the desired $C_\star\sqrt r$ bound for this $B$ and all $J$.

By \eqref{eq:block-tail-prob-allJ} and $\binom{p}{r}\le \binom{n}{r}$,
$$
\PP\Bigl(\bigcap_{J\subset[p], |J| = r}\mathcal E_{B,J}\Bigr)
\ge
1-\binom{n}{r}\exp\bigl(-c\tau_0^2Nr\bigr)
\ge
1-\binom{n}{r}\exp(-c'\tau_0^2 r^2).
$$
Using $\binom{n}{r}\le (en/r)^r\le n^r$ and absorbing constants,
$$
\PP\Bigl(\bigcap_{J\subset[p], |J| = r}\mathcal E_{B,J}\Bigr)
\ge 1-\exp\bigl(r\log n-c'' r^2\bigr).
$$

Union bound over all $B\in\mathcal T_p$ and $p\in\{n-\tilde s,\dots,n\}$.
Using \eqref{eq:Tp-count-smallU-new}, $m = n^3$, and $\varepsilon^{-1} = \rho n^2$, we have for $s\ge 1$,
$$
\log\Big(\frac{C m}{s\varepsilon}\Big)
 = 
\log\Big(\frac{C n^3}{s}\cdot \rho n^2\Big)
\le
C_0\log(n\rho)
$$
after adjusting $C_0$.
Hence, for $p\le n$,
$$
\log|\mathcal T_p|
\le
sp\log\Big(\frac{C m}{s\varepsilon}\Big)
\le
C_0sn\log(n\rho).
$$
Therefore the total failure probability is bounded by
\begin{align*}
\PP\bigl((\mathcal E^{(K)}(r,\tilde s))^c\bigr)
&\le
\sum_{p = n-\tilde s}^n |\mathcal T_p|\exp\bigl(r\log n-c''r^2\bigr)\\
&\le
(\tilde s + 1)\exp\Bigl(C_0sn\log(n\rho) + r\log n-c''r^2\Bigr).
\end{align*}

We use the parameter constraints \eqref{eq:EK-conds}. The condition $\tilde s^2/(ns)\ge C\log(n\rho)$ implies
$$
sn\log(n\rho)\le \frac{1}{C}\tilde s^2.
$$
Since $r\ge 4\tilde s$, we have $\tilde s^2\le r^2/16$.
Also $\tilde s\ge \log^2 n$ implies $r\ge 4\log^2 n$ and hence $r\log n = o(r^2)$ as $n\to\infty$.
Choosing the absolute constant $C$ in \eqref{eq:EK-conds} large enough and $n$ large enough, we obtain
$$
C_0sn\log(n\rho) + r\log n-c''r^2\le -c_1 r^2,
$$
for some $c_1>0$.
Thus
$$
\PP\bigl((\mathcal E^{(K)}(r,\tilde s))^c\bigr)\le (\tilde s + 1)e^{-c_1 r^2}\le \frac1n
$$
for all sufficiently large $n$, proving Lemma~\ref{lem:EK-prob}.
\end{proof}

%%%%%%%%%%%%%%%%%%%%%%%%%%%%%%%%%%%%%%%%%%%
%%%%%%%%%%%%%%%%%%%%%%%%%%%%%%%%%%%%%%%%%%%
\section{Verification of the parameter constraints} \label{sec:verify-constraints}

\begin{proof}[Proof of Lemma~\ref{lem:verify-constraints}]
Fix $n\ge n_0$ from~\eqref{eq:log-comparability}. Then $\rho\ge 1$ and
\begin{equation} \label{eq:Lambda-comp}
\log n \le \Lambda \le 2\log n.
\end{equation}

\subsubsection*{Preliminary bounds from ceilings/floors.}

From \eqref{eq:params-choice-47} and $n$ large enough, we have
\begin{equation} \label{eq:tilde-s-bounds-47}
n^{6/7}\Lambda^{2/7}\le \tilde s \le 2n^{6/7}\Lambda^{2/7}.
\end{equation}
Also $r\ge C_2\tilde s\Lambda$ and $r\le C_2\tilde s\Lambda + 1\le 2C_2\tilde s\Lambda$ for all large $n$
(since $\tilde s\Lambda\to\infty$).

Let
$$
A := \frac{\tilde s^2}{C_3 n\Lambda}.
$$
Using \eqref{eq:tilde-s-bounds-47}, we have
$$
A \ge \frac{(n^{6/7}\Lambda^{2/7})^2}{C_3 n\Lambda}
 = \frac{n^{12/7}\Lambda^{4/7}}{C_3 n\Lambda}
 = \frac{1}{C_3} n^{5/7}\Lambda^{-3/7}\xrightarrow[n\to\infty]{}\infty.
$$
Hence for $n\ge n_2$ large enough, $A\ge 2$ and therefore
\begin{equation} \label{eq:s-bounds-47}
\frac{A}{2}\le s = \lfloor A\rfloor \le A,
\quad\text{ i.e.}\quad
\frac{\tilde s^2}{2C_3 n\Lambda}\le s \le \frac{\tilde s^2}{C_3 n\Lambda}.
\end{equation}
In particular, $s\le n$ for all sufficiently large $n$. Indeed, using the upper bound $\tilde s\le 2n^{6/7}\Lambda^{2/7}$ from \eqref{eq:tilde-s-bounds-47},
$$
s \le \frac{\tilde s^2}{C_3 n\Lambda}
\le \frac{4n^{12/7}\Lambda^{4/7}}{C_3 n\Lambda}
= \frac{4}{C_3}n^{5/7}\Lambda^{-3/7}\le n
$$
for all sufficiently large $n$ (equivalently, $n^{2/7}\Lambda^{3/7}\to\infty$). Thus $s\in[1,n]$ as required in \S\ref{sec:bridge}.

\subsubsection*{1) Check \eqref{eq:EK-conds}.}

First, $r\ge C_2\tilde s\Lambda\ge 4\tilde s$ because $C_2\ge 4$ and $\Lambda\ge 1$.
Next, using $r\le 2C_2\tilde s\Lambda$ and \eqref{eq:tilde-s-bounds-47},
$$
\frac{r}{n}\le \frac{2C_2\tilde s\Lambda}{n}
\le \frac{4C_2 n^{6/7}\Lambda^{2/7}\Lambda}{n}
 = 4C_2 n^{-1/7}\Lambda^{9/7}\xrightarrow[n\to\infty]{}0,
$$
so for $n\ge n_3$ large enough we have $r\le n/4$. Since $\tilde s\le n/2$ for $n$ large (because $\tilde s/n\to 0$),
we get $r\le (n-\tilde s)/2$ for $n\ge n_3$.

Moreover, $\tilde s\ge n^{6/7}\ge (\log n)^2$ for all $n\ge n_4$.
Finally, from the upper bound on $s$ in \eqref{eq:s-bounds-47},
$$
\frac{\tilde s^2}{ns}\ge C_3\Lambda \ge C_{\mathrm{EK}}\Lambda = C_{\mathrm{EK}}\log(n\rho),
$$
since $C_3\ge C_{\mathrm{EK}}$. Thus \eqref{eq:EK-conds} holds for all $n$ large enough.

\subsubsection*{2) Check \eqref{eq:r-beats-comb}.}

We require
$$
c_{\mathrm{decay}} r \ge 4\tilde s\log\Big(\frac{en}{\tilde s}\Big) + \log 16.
$$
Since $r\ge C_2\tilde s\Lambda$ and $\Lambda\ge \log n$, we have
$$
c_{\mathrm{decay}}r \ge c_{\mathrm{decay}}C_2 \tilde s\log n.
$$
Also, $\tilde s\ge n^{6/7}$ implies $n/\tilde s\le n^{1/7}$ and hence
$$
\log\Big(\frac{en}{\tilde s}\Big)\le \log(e n^{1/7}) \le 2\log n
$$
for all large $n$. Therefore,
$$
4\tilde s\log\Big(\frac{en}{\tilde s}\Big)\le 8\tilde s\log n,
$$
and for $n$ large the additive term $\log 16$ is dominated by (say) $\tilde s$.
Thus it suffices that $c_{\mathrm{decay}}C_2\ge 16$, which is ensured by $C_2\ge 16/c_{\mathrm{decay}}$.

\subsubsection*{3) Check \eqref{eq:scale}.}

We need $4\rho C_\star\sqrt r\le c_{\mathrm{suppr}}n$.
Using $r\le 2C_2\tilde s\Lambda$ and \eqref{eq:tilde-s-bounds-47},
$$
\sqrt r \le \sqrt{2C_2} \sqrt{\tilde s\Lambda}
\le \sqrt{2C_2} \sqrt{2n^{6/7}\Lambda^{2/7}\cdot \Lambda}
 = 2\sqrt{C_2} n^{3/7}\Lambda^{9/14}.
$$
Hence
$$
4\rho C_\star\sqrt r
\le 8C_\star\sqrt{C_2} \rho n^{3/7}\Lambda^{9/14}.
$$
With $\rho = c_0n^{4/7}(\log n)^{-C}$ and $\Lambda\le 2\log n$,
$$
4\rho C_\star\sqrt r
\le
8C_\star\sqrt{C_2} c_0 n \Lambda^{9/14}(\log n)^{-C}
\le
C' c_0 n (\log n)^{9/14-C}.
$$
Since $C\ge 2$, one has $(\log n)^{9/14-C}\le 1$ for all $n\ge 3$.
Thus it suffices to choose $c_0^\ast>0$ so that $C'c_0^\ast\le c_{\mathrm{suppr}}$.
Then for $c_0\le c_0^\ast$, \eqref{eq:scale} holds for all $n\ge 3$.

\subsubsection*{4) Check \eqref{eq:rho-largeU-cond}.}

Recall $L = C_0\sqrt{\frac{n}{s}\Lambda}$ by \eqref{eq:L-def}.
From the lower bound on $s$ in \eqref{eq:s-bounds-47},
$$
\frac{n}{s} \le \frac{2C_3 n^2\Lambda}{\tilde s^2},
$$
hence
\begin{equation} \label{eq:L-upper-47}
L \le C_0\sqrt{\frac{2C_3 n^2\Lambda^2}{\tilde s^2}}
 = C_0\sqrt{2C_3} \frac{n\Lambda}{\tilde s}.
\end{equation}
Therefore the left-hand side of \eqref{eq:rho-largeU-cond} satisfies
$$
\frac{C_{\mathrm{U}}\rho L n\sqrt{\Lambda}}{\tilde s^2}
\le
C_{\mathrm{U}}\rho\cdot \Bigl(C_0\sqrt{2C_3}\frac{n\Lambda}{\tilde s}\Bigr)\cdot \frac{n\sqrt{\Lambda}}{\tilde s^2}
 = 
C'' \rho \frac{n^2\Lambda^{3/2}}{\tilde s^3},
$$
where $C'' := C_{\mathrm{U}}C_0\sqrt{2C_3}$.
Using $\tilde s\ge n^{6/7}\Lambda^{2/7}$ gives $\tilde s^3\ge n^{18/7}\Lambda^{6/7}$ and hence
$$
\frac{C_{\mathrm{U}}\rho L n\sqrt{\Lambda}}{\tilde s^2}
\le
C'' \rho \frac{n^{14/7}\Lambda^{3/2}}{n^{18/7}\Lambda^{6/7}}
 = 
C'' \rho n^{-4/7}\Lambda^{9/14}.
$$
Substituting $\rho = c_0n^{4/7}(\log n)^{-C}$ and using $\Lambda\le 2\log n$ yields
$$
\frac{C_{\mathrm{U}}\rho L n\sqrt{\Lambda}}{\tilde s^2}
\le
C'' c_0 \Lambda^{9/14}(\log n)^{-C}
\le
C''' c_0 (\log n)^{9/14-C}.
$$
Since $C\ge 2$, we have $(\log n)^{9/14-C}\le 1$ for all $n\ge 3$.
Thus it suffices to additionally choose $c_0^\ast$ small enough so that
$$
C''' c_0^\ast \le \frac{1}{4e^{2C_1 + 1}}.
$$
Then \eqref{eq:rho-largeU-cond} holds for all $n\ge 3$.

\subsubsection*{5) Check $\tilde s\ge \Lambda$.}

By \eqref{eq:tilde-s-bounds-47}, $\tilde s\ge n^{6/7}\Lambda^{2/7}$.
Since $\Lambda\le 2\log n$ and $n^{6/7}/\log n\to\infty$, there exists $n_5$ such that
$n^{6/7}\ge 2\log n$ for all $n\ge n_5$. Then for $n\ge n_5$,
$$
\tilde s \ge n^{6/7}\Lambda^{2/7}\ge n^{6/7}\ge 2\log n\ge \Lambda,
$$
which is \eqref{eq:tilde-s-ge-Lambda}.

Taking $n_1 := \max\{n_0,n_2,n_3,n_4,n_5,3\}$ completes the proof.
\end{proof}

%%%%%%%%%%%%%%%%%%%%%%%%%%%%%%%%%%%%%%%%%%%


\begin{thebibliography}{99}

\bibitem{BourgainSzarek1988}
J.~Bourgain and S.~J.~Szarek,
\newblock The Banach--Mazur distance to the cube and the Dvoretzky--Rogers factorization,
\newblock \emph{Israel J. Math.} \textbf{62} (1988), no.~2, 169--180.

\bibitem{BourgainTzafriri1987}
J.~Bourgain and L.~Tzafriri,
\newblock Invertibility of ``large'' submatrices with applications to the geometry of Banach spaces and harmonic analysis,
\newblock \emph{Israel J. Math.} \textbf{57} (1987), 137--224.

\bibitem{DavidsonSzarek2001}
K.~R.~Davidson and S.~J.~Szarek,
\newblock Local operator theory, random matrices and Banach spaces,
\newblock in \emph{Handbook of the Geometry of Banach Spaces}, Vol.~I,
North-Holland, Amsterdam, 2001, pp.~317--366.

\bibitem{DvoretzkyRogers1950}
A.~Dvoretzky and C.~A.~Rogers,
\newblock Absolute and unconditional convergence in normed linear spaces,
\newblock \emph{Proc. Natl. Acad. Sci. USA} \textbf{36} (1950), 192--197.

\bibitem{FriedlandYoussef2019}
O.~Friedland and P.~Youssef,
\newblock Approximating matrices and convex bodies,
\newblock \emph{Int. Math. Res. Not. IMRN} (2019), no.~8, 2519--2537.

\bibitem{Giannopoulos1995}
A.~Giannopoulos,
\newblock A note on the Banach--Mazur distance to the cube,
\newblock in \emph{Geometric Aspects of Functional Analysis (1992--1994)},
Oper. Theory Adv. Appl., vol.~77, Birkh\"auser, Basel, 1995, pp.~67--73.

\bibitem{Gluskin1981}
E.~D.~Gluskin,
\newblock Diameter of the Minkowski compactum is approximately $n$,
\newblock \emph{Funktsional. Anal. i Prilozhen.} \textbf{15} (1981), no.~1, 72--73;
English transl.: \emph{Funct. Anal. Appl.} \textbf{15} (1981), no.~1, 57--58.

\bibitem{NaorYoussef2017}
A.~Naor and P.~Youssef,
\newblock Restricted invertibility revisited,
\newblock in \emph{A Journey Through Discrete Mathematics: A Tribute to
Ji\v{r}\'i Matou\v{s}ek},
(M.~Loebl, J.~Ne\v{s}et\v{r}il, and R.~Thomas, eds.),
Springer, Cham, 2017, pp.~657--691.

\bibitem{SzarekTalagrand1989}
S.~J.~Szarek and M.~Talagrand,
\newblock An isomorphic version of the Sauer--Shelah lemma and the Banach--Mazur distance to the cube,
\newblock in \emph{Geom. Aspects of Funct. Anal. (1987--88)},
Lecture Notes in Math., vol.~1376, Springer, Berlin, 1989, pp.~105--112.

\bibitem{Szarek1990}
S.~J.~Szarek,
\newblock Spaces with large distance to $\ell_\infty^n$ and random matrices,
\newblock \emph{Amer. J. Math.} \textbf{112} (1990), no.~6, 899--942.

\bibitem{Tikhomirov2019}
K.~Tikhomirov,
\newblock On the Banach--Mazur distance to cross-polytope,
\newblock \emph{Adv. Math.} \textbf{345} (2019), 598--617.

\bibitem{Youssef2014}
P.~Youssef,
\newblock Restricted invertibility and the Banach--Mazur distance to the cube,
\newblock \emph{Mathematika} \textbf{60} (2014), no.~2, 237--260.

\bibitem{PisierMaurey1981}
G.~Pisier,
\newblock Remarques sur un r\'esultat non publi\'e de B.~Maurey,
\newblock \emph{S\'eminaire d'Analyse Fonctionnelle} (Maurey--Schwartz),
1980--1981, Expos\'e no.~5, 1--12.

\bibitem{LedouxTalagrand1991}
M.~Ledoux and M.~Talagrand,
\newblock \emph{Probability in Banach Spaces: Isoperimetry and Processes},
\newblock Ergebnisse der Mathematik und ihrer Grenzgebiete (3), vol.~23,
Springer-Verlag, Berlin, 1991.

\bibitem{Ledoux2001}
M.~Ledoux,
\newblock \emph{The Concentration of Measure Phenomenon},
\newblock Mathematical Surveys and Monographs, vol.~89, AMS, 2001.

\end{thebibliography}
\end{document}